\colorlet{darkblue}{blue!50!black}
\def\int{\displaystyle\!int}
\def\lim{\displaystyle\!lim}
\def\sum{\displaystyle\!sum}
\def\sup{\displaystyle\!sup}
\def\inf{\displaystyle\!inf}
\def\cap{\displaystyle\!cap}
\def\max{\displaystyle\!max}
\def\min{\displaystyle\!min}
\newtheorem{theorem}{\bf Theorem}[section]
\newtheorem{lemma}{\bf Lemma}[section]
\newtheorem{definition}{\bf Definition}[section]
\newtheorem{proposition}{\bf Proposition}[section]
\newtheorem{remark}{\bf Remark}[section]
\begin{document}

\title{Polynomial mixing for the white-forced hyperviscous Burgers equation on the whole line}
\author{Peng Gao
\\[2mm]
\small School of Mathematics and Statistics, and Center for Mathematics
\\
\small and Interdisciplinary Sciences, Northeast Normal University,
\\
\small Changchun 130024,  P. R. China
\\[2mm]
\small Email: gaopengjilindaxue@126.com }
\date{\today}
\maketitle

\vbox to -13truemm{}

\begin{abstract}
Our goal in this paper is to investigate ergodicity of the white-forced hyperviscous Burgers equation on the whole line.
Under the assumption that sufficiently many directions of the phase space are stochastically forced,
we can prove that the dynamics is attractive toward a unique invariant probability measure with polynomial rate of any
power. In order to prove this, we further develop coupling method and establish a sufficiently general criterion for polynomial mixing.
The proof of polynomial mixing for the white-forced hyperviscous Burgers equation is obtained by the combination of the coupling criterion and the Foia\c{s}-Prodi estimate of hyperviscous Burgers equation on the whole line.
\\[6pt]
{\sl Keywords: ergodicity; polynomial mixing; Burgers equation; Foia\c{s}-Prodi estimate; degenerate random force}
\\
{\sl 2020 Mathematics Subject Classification: 60H15, 35R60, 37A25}
\end{abstract}
\tableofcontents
\setcounter{section}{0}

\section{Introduction}
The \textit{hyperviscous Burgers equation} is given by
\begin{equation*}
u_{t}+u_{xxxx}+uu_{x}=0,
\end{equation*}
which introduces a fourth-order diffusion term $u_{xxxx}$, to model complex viscous effects in fluid flows. This equation arises as an extension of the classical Burgers equation
\begin{equation*}
u_{t}-u_{xx}+uu_{x}=0,
\end{equation*}
particularly suited for capturing small-scale turbulent phenomena that cannot be adequately described by standard viscosity.
In contrast, the classical Burgers equation includes only a second-order term $-u_{xx}$, representing traditional viscous dissipation. While it effectively describes wave propagation, shock formation, and basic viscous behavior, it often falls short in high Reynolds number regimes where turbulence involves more intricate diffusion processes. The hyperviscous Burgers equation addresses this limitation by incorporating higher-order viscosity, enabling a more accurate representation of microscopic turbulent effects, especially in regions with steep gradients or near boundaries. Thus, compared to its classical counterpart, the hyperviscous form provides a more powerful tool for analyzing and simulating nonlinear fluid dynamics where standard viscosity proves insufficient.
For further details on this aspect, we refer the reader to \cite{B2014-hyp,C1995-hyp,Z1997-hyp,F2013-hyp,G2002-hyp,B2019-hyp} and the references therein.
In \cite{C1995-hyp,G2002-hyp}, the hyperviscous Burgers equation driven by a random force with appropriately chosen spatial correlations was studied both numerically and via the dynamic renormalization group method. It was found that the statistical properties of its solutions closely resemble those observed experimentally in real three-dimensional turbulence. Since the mixing of fluid is a fundamental assumption in Kolmogorov's theory of turbulence, the theory of mixing has attracted widespread attention. Over the past three decades, substantial progress has been made in the study of mixing for partial differential equations (PDEs) driven by random forcing. We refer the reader to \cite{Fla1,Deb1,HM06,HM08,HM11-1,HM11-2,M1} and the monographs \cite{DaZ2,KS12} for comprehensive overviews of this field.

Inspired by \cite{B2014-hyp,C1995-hyp,Z1997-hyp,F2013-hyp,G2002-hyp,B2019-hyp}, one can use the white-forced hyperviscous Burgers equation as a model to study turbulence theory. As mentioned above in the context of Kolmogorov's theory of turbulence, the study of mixing for the white-forced hyperviscous Burgers equation on the entire real line $\mathbb{R}$ is of significant interest. Specifically, we elaborate on this from both physical and mathematical perspectives.
From a physical standpoint, within Kolmogorov's theory of turbulence, scholars often assume that fully developed turbulence in the whole space is spatially homogeneous and isotropic, and possesses a certain mixing property. This implies that, under the assumption of a sufficiently high Reynolds number and sufficiently strong turbulent mixing, one can estimate the same statistical quantity (such as the energy spectrum) either by taking a time average over a finite region or by taking a spatial average over a large domain. This assumption has long been widely applied in engineering and technology. Therefore, studying a turbulence model described by a stochastic partial differential equation (such as the stochastic hyperviscous Burgers equation) and establishing its mixing property on the whole space carries significant physical meaning, as it can provide a rigorous mathematical foundation for the above assumption.
From a mathematical perspective, previous research has mostly focused on the mixing properties of stochastic partial differential equations on bounded domains. The methods employed rely heavily on characteristics of bounded domains, such as compactness and the spectral properties of partial differential operators. However, these techniques are no longer applicable to the study of mixing properties on the whole space, which necessitates the development of new tools and techniques. Investigating the mixing properties of stochastic partial differential equations on the whole space can therefore help enrich the relevant mathematical research tools and methodologies.

More precisely, in the present paper, we consider the mixing problem for the following white-forced hyperviscous Burgers equation with damping
\begin{equation}\label{KSE}
\begin{array}{l}
\left\{
\begin{array}{lll}
u_{t}+au+u_{xxxx}+uu_{x}=h+\eta\\
u(x,0)=u_{0}
\end{array}
\right.
\end{array}
\begin{array}{lll}
\textrm{in}~\mathbb{R},
\\\textrm{in}~\mathbb{R},
\end{array}
\end{equation}
where $a>0$ is a constant, $h=h(x),$ $\eta$ is a white noise of the form
$$\eta(t):=\frac{\partial}{\partial t}W(t),~~W(t):=\sum\limits_{i=1}^{\infty}b_{i}\beta_{i}(t)e_{i}(x),$$
and $b_{i}\in \mathbb{R}, \{\beta_{i}\}_{i\geq1}$ is a sequence of
independent real-valued standard Brownian motions defined on a filtered probability space
$(\Omega,\mathcal{F},\mathcal{F}_{t},\mathbb{P})$ satisfying the usual conditions, $\{e_{i}\}_{i\geq 1}$ is an orthonormal basis in $L^{2}(\mathbb{R}).$

The reason this paper adopts the hyperviscous Burgers equation instead of the classical Burgers equation is primarily due to considerations of the dissipation mechanism on the whole line. The classical Burgers equation lacks sufficient dissipation on the whole line, making it difficult to effectively characterize the energy dissipation behavior of fluids in an unbounded domain. Specifically, on the whole line, the dissipative term $-u_{xx}$ in the classical Burgers equation provides weak dissipation and cannot effectively suppress the complexity introduced by the nonlinear term $uu_{x}$. This can be clearly observed in the construction of the Foia\c{s}-Prodi type estimate (see Theorem \ref{FP}). Due to this limitation in dissipation, current research on the classical Burgers equation on the whole space can only establish the existence and uniqueness of an invariant measure, but cannot determine its mixing rate.

To overcome the above difficulty, this paper introduces a higher-order viscous term, namely the hyperviscous Burgers equation. It should be noted that although \cite{N3} has studied mixing for the stochastic 2D Navier-Stokes system on the whole space, its methods and techniques cannot be directly applied to the Burgers equation on the whole line. The reason is that the approach for the 2D Navier-Stokes system relies heavily on the fact that the velocity field is divergence-free, which implies that there is no vorticity stretching. In contrast, the one-dimensional Burgers equation does feature vorticity stretching, rendering the methods for the 2D Navier-Stokes system inapplicable.

Therefore, to compensate for the insufficient dissipation of classical viscosity and the structural mismatch with the methods used for the 2D Navier-Stokes system, we adopt a strategy of enhancing dissipation-namely, taking the hyperviscous Burgers equation, rather than the classical Burgers equation, as the object of study.

Let $H:=L^{2}(\mathbb{R}), V:=H^{1}(\mathbb{R})$. We introduce the following assumption
\begin{equation*}
\begin{array}{l}
\textbf{(A)}
\left\{
\begin{array}{lll}
&\varphi h\in H,~~\sum\limits_{i=1}^{\infty}|(h,e_{i})|\|e_{i}\|_{3}<+\infty,~~B_{1}:=\sum\limits_{i=1}^{\infty}b_{i}^{2}<+\infty,\\
&B_{2}:=\sum\limits_{i=1}^{\infty}b_{i}^{2}\|\varphi e_{i}\|^{2}<+\infty,~~B_{3}:=\sum\limits_{i=1}^{\infty}b_{i}^{2}\|e_{i}\|_{3}^{2}<+\infty,
\end{array}
\right.
\end{array}
\end{equation*}
where $\varphi(x):=\ln (x^{2}+2)$. We refer the reader to the notation at the end of Section 1 for a detailed definition of $\|\cdot\|_s$.
Under the assumption $\textbf{(A)}$, following the similar arguments as in \cite{DaZ1,F1}, we can show that the stochastic hyperviscous Burgers equation \eqref{KSE} is globally well-posed and defines a Markov process in $V$.
\par
~~
\par
Now, we are in a position to present the main result in this paper.
\begin{theorem}\label{MT1}
Let the assumption $\textbf{(A)}$ hold. Then, there exists an integer $N\geq1$ such that if
\begin{equation}\label{41}
\begin{split}
b_{i}\neq0,~i=1,2,\cdots,N,
\end{split}
\end{equation}
then there exists a unique stationary measure $\mu\in \mathcal{P}(V)$ for \eqref{KSE}. Moreover, for any $p>1$, there exists a $C_{p}>0$ such that for any $u_{0}\in V,$ the solution $u$ of \eqref{KSE} satisfies
\begin{equation*}
\begin{split}
\|P_{t}(u_{0},\cdot)-\mu\|_{L(V)}^{*}\leq C_{p}(1+\|u_{0}\|_{1}^{2}+\|u_{0}\|^{\frac{14}{3}})(t+1)^{-p},~t\geq 0.
\end{split}
\end{equation*}
\end{theorem}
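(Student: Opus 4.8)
The plan is to obtain both assertions of Theorem \ref{MT1} --- uniqueness of $\mu$ together with the displayed convergence rate, and the existence of $\mu$ --- from a general coupling criterion for polynomial mixing, applied to the Markov family $(P_t)$ generated by \eqref{KSE}. The real work is to verify the hypotheses of that criterion, which break into three parts: dissipative a priori moment estimates, a Foia\c{s}--Prodi squeezing estimate valid on the whole line, and a coupling construction that exploits the non-degeneracy $b_i\neq0$ for $i=1,\dots,N$.

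\emph{Step 1: a priori estimates.} Applying It\^o's formula to $\|u(t)\|^2$ and to the weighted quantity $\int_{\mathbb{R}}\varphi\,|u(t)|^2\,dx$, and using $a>0$ together with $B_1,B_2<\infty$, one gets exponential-moment bounds for $\|u(t)\|^2$ and for the weighted norm. Applying It\^o next to $\|u(t)\|_1^2$ and to a functional controlling $\|u(t)\|_3$ (here $B_3<\infty$ and $\sum_i|(h,e_i)|\,\|e_i\|_3<\infty$ enter), and estimating the Burgers term $uu_x$ by interpolation against the hyper-dissipation $\|u_{xxx}\|^2$, one arrives at $\mathbb{E}\|u(t)\|_1^2\le C(1+\|u_0\|_1^2+\|u_0\|^{14/3})$ and, more generally, at all polynomial moments of $\sup_{s\le T}\|u(s)\|_1$ controlled in terms of $T$ and $\|u_0\|_1$. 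Because the nonlinearity contributes the heavy power $\|u_0\|^{14/3}$, the natural Lyapunov functional at the $V$-level has only stretched-exponential tails, hence only polynomial (not exponential) moments of the return time to a bounded set of $V$ --- which is exactly the structure that produces ``polynomial rate of any power''. The same weighted and higher-order bounds give tightness of the time averages $\frac1T\int_0^T P_t(u_0,\cdot)\,dt$ in $V=H^1(\mathbb{R})$ (needed because $V$ is not compactly embedded in $H$), so a Bogolyubov--Krylov argument yields at least one stationary measure.

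\emph{Step 2: Foia\c{s}--Prodi estimate.} Let $u,v$ solve \eqref{KSE} with the same noise, let $P_N$ be the orthogonal projection onto $\mathrm{span}\{e_1,\dots,e_N\}$ and $Q_N=I-P_N$, and add to the $v$-equation an auxiliary control enforcing $P_Nu(t)\equiv P_Nv(t)$; put $w=u-v=Q_Nw$. Testing the $w$-equation against $\varphi w$ and against $-w_{xx}$, absorbing the nonlinear contribution $\frac14\int_{\mathbb{R}}(u_x+v_x)\,w^2\,dx$ and the commutators generated by $\varphi$, and using that $Q_N\partial_x^4Q_N$ is strongly coercive once $N$ is large, one shows $\|w(t)\|_1\to0$ as long as the low modes agree, with a decay rate (in the $\varphi$-weighted norms) faster than every polynomial. \emph{This is the step I expect to be the main obstacle.} On $\mathbb{R}$ the operator $\partial_x^4$ has continuous spectrum $[0,\infty)$ with no spectral gap and need not be diagonal in $\{e_i\}$, so the effective dissipativity of $Q_N\partial_x^4Q_N$ on $\mathrm{Ran}\,Q_N$ has to be recovered through the slowly growing weight $\varphi=\ln(x^2+2)$ and a careful treatment of the associated commutators; it is precisely this weakness that degrades exponential mixing to polynomial mixing of arbitrary order.

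\emph{Step 3: coupling and conclusion.} Given $u_0,v_0$ in a fixed ball of $V$, I would run $(u,v)$ under independent noises until both enter a smaller ball, then switch to a Girsanov coupling of the $N$ forced coordinates that makes $P_Nu=P_Nv$ on a ``good'' event of probability bounded below, the Radon--Nikodym cost being controlled via $b_i\neq0$ ($i\le N$) and the a priori bounds. On the good event the Foia\c{s}--Prodi estimate of Step 2 drives $\|u(t)-v(t)\|_1$ to $0$ faster than any polynomial. Iterating this block and combining the polynomial-moment recurrence time of Step 1 with the (geometrically distributed) number of unsuccessful attempts, one checks that the associated ``coupling time'' has all polynomial moments; feeding this into the abstract criterion gives, for every $p>1$, a constant $C_p$ with $\|P_t(u_0,\cdot)-\mu\|_{L(V)}^*\le C_p(1+\|u_0\|_1^2+\|u_0\|^{14/3})(t+1)^{-p}$, and uniqueness of $\mu$ follows at once.
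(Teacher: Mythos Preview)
Your high-level architecture (a priori bounds, Foia\c{s}--Prodi, coupling, abstract criterion) matches the paper, but two of the load-bearing mechanisms are misidentified.

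\textbf{Foia\c{s}--Prodi on $\mathbb{R}$.} You write that one should use ``that $Q_N\partial_x^4 Q_N$ is strongly coercive once $N$ is large'' and then recover this coercivity through the weight $\varphi$. On the whole line this is not how $N$ enters: $\{e_i\}$ is an arbitrary orthonormal basis of $L^2(\mathbb{R})$, not an eigenbasis of $\partial_x^4$, so $Q_N\partial_x^4 Q_N$ has no spectral gap for any $N$. In the paper the dissipation in the $w$-equation comes simply from $a\|w\|_1^2+\|w_{xx}\|^2+\|w_{xxx}\|^2$; the role of $N$ is to make the \emph{nonlinear} term $(Q_N(uu_x-vv_x),\,w-w_{xx})$ small. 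This is done by a near/far-field splitting with a smooth cut-off $\chi_A$: for the near field one uses the key lemma $\|Q_N\chi_A f\|\le \varepsilon\|f\|_s$ (valid for compactly supported $f$ once $N=N(\varepsilon,A)$ is large), and for the far field one inserts $\psi\cdot\psi^{-1}$ with the space--time weight $\psi(t,x)=\varphi(x)(1-e^{-t/\varphi(x)})$, using that $\psi^{-1}\le\varepsilon$ for $|x|\ge A/2$ and $t\ge T$. Your proposal never introduces the cut-off, and testing against $\varphi w$ alone will not produce the smallness in $N$.

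\textbf{Where the polynomial rate comes from.} You attribute it to the Lyapunov function having ``only stretched-exponential tails, hence only polynomial moments of the return time''. In the paper the recurrence is in fact \emph{exponential}: $\mathbb{E}_{\mathbf{u}}e^{\delta\tau_{\mathbf{B}}}\le C(1+\|u\|_1^2+\|u\|^{2p_2}+\|u'\|_1^2+\|u'\|^{2p_2})$. The polynomial rate is generated in the \emph{squeezing} step. The weighted Foia\c{s}--Prodi estimate requires control of the energy $\mathcal{E}_u^\psi(t)$, and the weighted inequality for $\|\psi u\|^2$ forces, through $|(\psi^2 u,uu_x)|\le \delta\|\psi u\|^2+C(\|u\|_2^2+\|u\|^{14/3})$, a term $\int_0^t\|u\|^{14/3}ds$. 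The stopping-time tail $\mathbb{P}(l\le\tau^u<\infty)$ is then only $C(\rho+Ll+1)^{1-q}$, because for $p>1$ the martingale $2p\int\|u\|^{2p-2}(u,dW)$ does not admit the exponential-supermartingale trick and one must fall back on Burkholder--Davis--Gundy plus Chebyshev. This polynomial tail, not the recurrence, is what feeds into the abstract criterion and yields $(t+1)^{-p}$.
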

We refer the reader to the notation at the end of Section 1 for a detailed definition of $\|\cdot\|_{L(V)}^{*}$.
In order to prove Theorem \ref{MT1}, we develop an abstract criterion for establishing polynomial mixing of Markov processes.
\par
Let $X$ be a separable Banach space with a norm $\|\cdot\|$. Let $(u_{t}, \mathbb{P}_{u})$ be a Feller family of Markov processes in $X,$ $P_{t}(u,A):=\mathbb{P}(u_{t}(u,\omega)\in A)$ is
the transition function. We introduce the following associated Markov operators
\begin{equation*}
\begin{split}
&\mathcal{B}_{t}: C_{b}(X)\rightarrow C_{b}(X),~\mathcal{B}_{t}f(u):=\int_{X}f(v)P_{t}(u,dv),~\forall f\in C_{b}(X),\\
&\mathcal{B}_{t}^{*}: \mathcal{P}(X)\rightarrow \mathcal{P}(X),~\mathcal{B}_{t}^{*}\lambda(A):=\int_{X}P_{t}(u,A)\lambda(du),~\forall \lambda\in \mathcal{P}(X).
\end{split}
\end{equation*}

\begin{definition}
Let $(\mathbf{u}_{t},\mathbb{P}_{\mathbf{u}})$ be a Markov family in $X\times X.$
$(\mathbf{u}_{t},\mathbb{P}_{\mathbf{u}})$ is called an \textbf{extension} of $(u_{t}, \mathbb{P}_{u})$ if for any $\mathbf{u}=(u,u^{\prime})\in X\times X$ the laws under $\mathbb{P}_{\mathbf{u}}$ of processes $\{\Pi_{1}\mathbf{u}_{t}\}_{t\geq0}$ and $\{\Pi_{2}\mathbf{u}_{t}\}_{t\geq0}$ coincide with
those of $\{u_{t}\}_{t\geq0}$ under $\mathbb{P}_{u}$ and $\mathbb{P}_{u^{\prime}}$ respectively, where $\Pi_{1}$ and $\Pi_{2}$ denote the
projections from $X\times X$ to the first and second component.
\end{definition}
Let $B$ be a closed subset in $X$, $\textbf{X}:=X\times X$ and $\textbf{B}:=B\times B.$ Define
\begin{equation*}
\begin{split}
&\tau_{\textbf{B}}=\tau_{\textbf{B}}(\textbf{u},\omega)=\inf\{t\geq0:\textbf{u}_{t}(\textbf{u},\omega)\in \textbf{B}\},\\
&\sigma=\sigma(\textbf{u},\omega)=\inf\{t\geq0:\|u_{t}(\textbf{u},\omega)-u_{t}^{\prime}(\textbf{u},\omega)\|\geq C(t+1)^{-p}\},\\
&\rho=\sigma+\tau_{\textbf{B}}\circ \theta_{\sigma}=\sigma(\textbf{u},\omega)+\tau_{\textbf{B}}(\textbf{u}_{\sigma(\textbf{u},\omega)}(\textbf{u},\omega),\theta_{\sigma(\textbf{u},\omega)}\omega),
\end{split}
\end{equation*}
where $\{\theta_t\}_{t\geq 0}$ is a semigroup of measure-preserving transformations from $\Omega$ to $\Omega$ such that $\theta_t^{-1}\mathcal{F}_s\subset \mathcal{F}_{t+s}$, see \cite{Shi08}.

\begin{definition}\label{Def1}
The family $(u_{t},\mathbb{P}_{u})$ is called satisfies the \textbf{coupling
hypothesis} if there exists an extension $(\mathbf{u}_{t},\mathbb{P}_{\mathbf{u}})$, a closed set $B\subset X,$ and an increasing
function $g(r)\geq1$ of the variable $r\geq0$ such that the following two properties
hold
\par
\textbf{Recurrence} There exists $p>1$ such that
$$\mathbb{E}_{\mathbf{u}} \tau_{\textbf{B}}^{p}\leq G(\mathbf{u})$$ for all $\mathbf{u}=(u,u^{\prime})\in \mathbf{X},$
where we set $G(\mathbf{u})=g(\|u\|)+g(\|u^{\prime}\|).$
\par
\textbf{Polynomial squeezing} There exist positive constants $\delta_{1},\delta_{2},c,K$ and $1\leq q\leq p$
such that, for any $\mathbf{u}\in \mathbf{B},$ we have
\begin{eqnarray}
&\label{PS1}\mathbb{P}_{\mathbf{u}}(\sigma=\infty)\geq \delta_{1},\\
&\label{PS2}\mathbb{E}_{\mathbf{u}}(\mathbb{I}_{\sigma<\infty}\sigma^{p})\leq c,\\
&\label{PS3}\mathbb{E}_{\mathbf{u}}(\mathbb{I}_{\sigma<\infty}G(\mathbf{u}_{\sigma})^{q})\leq K.
\end{eqnarray}
Any extension of $(u_{t},\mathbb{P}_{u})$ satisfying the above properties will be called a \textbf{mixing extension}.
\end{definition}
\par
Now, we are in a position to present an abstract criterion for polynomial mixing. Let $\mathcal{T}=\mathbb{R}$ or $\mathbb{Z}$ and
$\mathcal{T}_+=\{t\in \mathcal{T}:~t\geq 0\}$.

\begin{theorem}\label{THACPM}
Let $(u_{t},\mathbb{P}_{u})$ be a Feller family of Markov processes that possesses
a mixing extension $(\mathbf{u}_{t},\mathbb{P}_{\mathbf{u}})$. Then there exists a random time $\ell\in \mathcal{T}_{+}$ such
that, for any $\mathbf{u}\in \mathbf{X},$ with $\mathbb{P}_{\mathbf{u}}$-probability $1$, for any $p_{0}<p,$ we have
\begin{eqnarray}
&\label{PS4}\|u_{t}-u_{t}^{\prime}\|\leq C_{1}(t-\ell+1)^{-p},~\forall t\geq \ell,\\
&\nonumber\mathbb{E}_{\mathbf{u}}\ell^{p_{0}}\leq C_{1}(G(\mathbf{u})+1),
\end{eqnarray}
where $\mathbf{u}\in \mathbf{X}$ is an arbitrary initial point, $g(r)$ is the function in Definition \ref{Def1},
and $C_{1}, \alpha,$ and $\beta$ are positive constants not depending on $\mathbf{u}$ and $t.$ If, in addition,
there is an increasing function $\tilde{g}(r)\geq1$ such that
\begin{equation*}
\begin{split}
\mathbb{E}_{u}g(\|u_{t}\|)\leq \tilde{g}(u),~\forall t\geq 0,
\end{split}
\end{equation*}
for all $u\in X,$ then the family $(u_{t},\mathbb{P}_{u})$ has a unique stationary measure $\mu\in \mathcal{P}(X),$ and it holds that
\begin{equation*}
\begin{split}
\|P_{t}(u,\cdot)-\mu\|^{*}_{L}\leq M(\|u\|)(t+1)^{-p_{0}},~~\forall t\geq 0,
\end{split}
\end{equation*}
for $u\in X,$
where $M$ is given by the relation
$$M(r)=3C_{1}(g(r)+\tilde{g}(0)).$$
\end{theorem}
\begin{remark}
Analysing the proof of Theorem \ref{THACPM}, it is not difficult to see that
Theorem \ref{THACPM} remains valid if $\sigma$ is replaced by any other stopping time $\tilde{\sigma}$ such that
$$\mathbb{P}_{\textbf{u}}(\tilde{\sigma}\leq \sigma)=1~for~any~\textbf{u}\in \textbf{B}.$$
In other words, if inequalities \eqref{PS1}-\eqref{PS4} hold with $\sigma$ replaced by $\tilde{\sigma}$, then
the conclusion of Theorem \ref{THACPM} is true. To see this, it suffices to repeat the
arguments in the proof of Theorem \ref{THACPM}, replacing $\sigma$ by $\tilde{\sigma}$ everywhere.
\end{remark}
\begin{remark}
Theorem \ref{THACPM} may be viewed as a polynomial counterpart of Theorem 3.1.7 in \cite{KS12}. Whereas Theorem 3.1.7 establishes exponential mixing for Markov processes under specific conditions, Theorem \ref{THACPM} generalizes these methods and concepts to polynomial mixing, thereby providing a framework to investigate the mixing behavior of a wider class of stochastic partial differential equations.
\end{remark}

Early work primarily focused on establishing mixing for SPDEs on the bounded domain, including the Navier-Stokes system, the Burgers equation, the complex Ginzburg-Landau equation, and the reaction-diffusion equation. Indeed, much of the existing literature is devoted to exponential mixing for SPDEs; see, for instance, \cite{HM08,HM11-1,FG15,KS12,KNS1,Shi08} and the references therein. The exponential rates obtained in these works rely crucially on the strong dissipative structure of the underlying systems. In contrast, establishing comparable results for weakly dissipative SPDEs (e.g., the white-forced parabolic equation on the whole space, the Schr\"{o}dinger equation, and the wave equation, among others) is generally far more challenging.
In recent years, increasing attention has been devoted to polynomial mixing for weakly dissipative SPDEs, where the absence of strong dissipation renders exponential convergence unlikely. To date, however, polynomial mixing has been investigated to a much more limited extent (see, e.g., \cite{PMJEE2005,PMJEE2008,PMAMO,PM2024}). All of these works are concerned with equations posed on bounded spatial domains. We emphasize that, compared with exponential mixing, the analysis of polynomial mixing is substantially more delicate, reflecting the weaker dissipative mechanisms inherent in the systems under consideration.

In recent years, the mixing problem for SPDEs on the whole space has gradually attracted increasing attention (see, e.g., \cite{BCK14,BL19,DGR21,BFZ23,N2,N3,GPW}).
To the best of our knowledge, in previous works on the mixing property of stochastic Burgers equations on the whole line, only the existence and uniqueness of invariant measures were typically established, without corresponding results on convergence rates. In the present paper, we develop the coupling method to the hyperviscous Burgers equation. As a result, we not only establish the uniqueness of the invariant measure for the hyperviscous Burgers equation but also obtain a polynomial convergence rate (see Theorem~\ref{MT1}), thereby laying a foundation for further research on related problems. Theorem \ref{THACPM}, established in this paper, is also of independent interest, as it provides an abstract criterion for establishing polynomial mixing of Markov processes, this abstract criterion has subsequently been applied to the study of polynomial mixing for stochastic wave equations (see \cite{GPW}) on the whole line and stochastic Navier-Stokes systems (see \cite{N3}) on the whole space.

In the present paper, the principal difficulty in establishing mixing for the white-forced hyperviscous Burgers equation \eqref{KSE} on the whole line stems from the nonlinear term $uu_{x}$ and the unboundedness of the spatial domain. These difficulties lead to polynomial, rather than exponential, mixing of the system.
To prove Theorem~\ref{MT1}, we further develop the coupling method and rely on Theorem~\ref{THACPM}. More precisely, when applying Theorem~\ref{THACPM} to the hyperviscous Burgers equation, we combine the coupling approach with a Foia\c{s}-Prodi type estimate on the whole line. This strategy relies crucially on a suitable weight estimate, which is substantially more delicate to derive for the hyperviscous Burgers equation than for equations such as the Schr\"{o}dinger equation, the complex Ginzburg-Landau equation, and the reaction-diffusion equation.
In contrast to the nonlinear terms $u^{3}$ or $|u|^{2}u$ found in these equations, where one can typically rely on the sign condition, the term $uu_{x}$ cannot be treated in the same way.
Instead, we handle this term by employing high-order moment estimates (see Section 2.2 and Section 2.3) together with a maximal martingale inequality, which is fundamentally different from traditional ones. Moreover, in the absence of strong dissipation and compactness, techniques commonly used for SPDEs on bounded domains, such as the Poincar\'{e} inequality, are no longer directly applicable in the unbounded domain setting. To overcome this difficulty, we introduce a cut-off technique tailored to the hyperviscous Burgers equation \eqref{KSE}. Leveraging these new ideas and methods, we are able to establish Theorem~\ref{MT1}.

\par~~
\par
\textit{Notation}
\par
Throughout the rest of the paper, $c$ and $C$ denote generic positive constants that may change
from line to line. The main parameters that they depend on will appear between parenthesis, e.g.,
$C(p,T)$ is a function of $p$ and $T.$
\par
Let $X$ be a Polish space with a metric $d_{X}(u,v)$, the Borel $\sigma$-algebra on $X$ is denoted by $\mathcal{B}(X)$ and the set of Borel
probability measures by $\mathcal{P}(X).$ $C_{b}(X)$ is the space of continuous functions $f:X\rightarrow \mathbb{C}$ endowed with the
norm $\|f\|_{\infty}=\sup_{u\in X}|f(u)|.$ $B_{X}(R)$ stands for the ball in $X$ of radius $R$ centred at zero.
We write $C(X)$ when $X$ is compact. $L_{b}(X)$ is the space of functions $f\in C_{b}(X)$ such that
$$\|f\|_{L(X)}=\|f\|_{\infty}+\sup\limits_{u\neq v}\frac{|f(u)-f(v)|}{d_{X}(u,v)}<\infty.$$
The dual-Lipschitz metric on $\mathcal{P}(X)$ is defined by
$$\|\mu_{1}-\mu_{2}\|^{*}_{L(X)}=\sup_{\|f\|_{L(X)}\leq 1}|\langle f,\mu_{1}\rangle-\langle f,\mu_{2}\rangle|,~~\mu_{1},\mu_{2}\in \mathcal{P}(X),$$
where $\langle f,\mu \rangle=\int_{X}f(u)\mu(du).$
\par
Let $H^{s}:=H^{s}(\mathbb{R})$ we denote the Sobolev space of real-valued functions on $\mathbb{R}$ with the norm
\begin{equation*}
\begin{split}
\|u\|_{s}^{2}:=\int_{\mathbb{R}}(1+|\xi|^{2})^{s}|(\mathcal{F}u)(\xi)|^{2}d\xi,
\end{split}
\end{equation*}
where $\mathcal{F}u$ is the Fourier transform of $u.$ When $s=0$, we denote $\|\cdot\|_{0}$ by $\|\cdot\|$.
\par
For $q>1,$ $Q_{q}(r):=r^{1-q}(r>0)$.
\par
If $p$ and $q$ are real numbers, then $p\vee q (p\wedge q)$ stands for their maximum(minimum).
\par
$a_{0}:=a\wedge 1,$ where $a$ is the damping coefficient in \eqref{KSE}.

\section{Probability estimates}
\subsection{Construct a mixing extension}
For initial points $u,u^{\prime}\in H,$ let $\{u(t)\}_{t\geq 0}$ and $\{u^{\prime}(t)\}_{t\geq 0}$
denote the solutions of the equation \eqref{KSE} issued from $u,u^{\prime},$ respectively. We introduce
an auxiliary process $\{v(t)\}_{t\geq 0}$ which is the solution of the problem
\begin{eqnarray}\label{F1}
 \begin{array}{l}
 \left\{
 \begin{array}{llll}
 v_{t}+av+v_{xxxx}+vv_{x}+P_{N}(uu_{x}-vv_{x})=h+\eta
 \\v(x,0)=u^{\prime}(x)
 \end{array}
 \right.
 \end{array}
 \begin{array}{lll}
 {\rm{in}}~\mathbb{R},
\\{\rm{in}}~\mathbb{R},
\end{array}
\end{eqnarray}
where $P_{N}$ denotes the orthogonal projection in $H$ onto the space by the family $\{e_{1},\cdots,e_{N}\}$
with integer $N\geq 1$. Let $T>0$ be a time parameter that will be chosen later. We denote by
$\lambda_{T}(u,u^{\prime})$ and $\lambda_{T}^{\prime}(u,u^{\prime})$ the distributions of processes $\{v(t)\}_{0\leq t\leq T}$ and $\{u^{\prime}(t)\}_{0\leq t\leq T}$,
respectively. Then $\lambda_{T}(u,u^{\prime})$ and $\lambda_{T}^{\prime}(u,u^{\prime})$ are probability measures on $C([0,T];H).$
By \cite[Theorem 1.2.28]{KS12}, there exists a maximal coupling $(V_{T}(u,u^{\prime}),V_{T}^{\prime}(u,u^{\prime}))$
for the pair $(\lambda_{T}(u,u^{\prime}),\lambda_{T}^{\prime}(u,u^{\prime}))$ defined on some probability space $(\tilde{\Omega},\tilde{\mathcal{F}},\tilde{\mathbb{P}})$.
We denote by $\{\tilde{v}(t)\}_{0\leq t\leq T}$ and $\{\tilde{u}^{\prime}(t)\}_{0\leq t\leq T}$ the flows of this maximal coupling.
Then, $\tilde{v}$ is the solution of
\begin{eqnarray}\label{29}
 \begin{array}{l}
 \left\{
 \begin{array}{llll}
 \tilde{v}_{t}+a\tilde{v}+\tilde{v}_{xxxx}+\tilde{v}\tilde{v}_{x}-P_{N}(\tilde{v} \tilde{v}_{x})=h+\tilde{\eta}
 \\\tilde{v}(x,0)=u^{\prime}(x)
 \end{array}
 \right.
 \end{array}
 \begin{array}{lll} {\rm{in}}~\mathbb{R},
\\{\rm{in}}~\mathbb{R},
\end{array}
\end{eqnarray}
with $\mathcal{D}(\{\int_{0}^{t}\tilde{\eta}(s)ds\}_{0\leq t\leq T})=\mathcal{D}(\{\int_{0}^{t}(\eta(s)-P_{N}(u(s)u_{x}(s))ds\}_{0\leq t\leq T})$.
Let $\tilde{u}$ be the solution of
\begin{eqnarray}\label{30}
 \begin{array}{l}
 \left\{
 \begin{array}{llll}
 \tilde{u}_{t}+a\tilde{u}+\tilde{u}_{xxxx}+\tilde{u}\tilde{u}_{x}-P_{N}(\tilde{u}\tilde{u}_{x})=h+\tilde{\eta}
 \\\tilde{u}(x,0)=u(x)
 \end{array}
 \right.
 \end{array}
 \begin{array}{lll} {\rm{in}}~\mathbb{R},
\\{\rm{in}}~\mathbb{R},
\end{array}
\end{eqnarray}
this implies that $\mathcal{D}(\{\tilde{u}(t)\}_{0\leq t\leq T})=\mathcal{D}(\{u(t)\}_{0\leq t\leq T})$.
We define operators $\mathcal{R}$ and $\mathcal{R}^{\prime}$ by
\begin{equation*}
\begin{split}
\mathcal{R}_{t}(u,u^{\prime},\omega)=\tilde{u}(t),~~\mathcal{R}_{t}^{\prime}(u,u^{\prime},\omega)=\tilde{u}^{\prime}(t)
\end{split}
\end{equation*}
for any $u,u^{\prime}\in H, \omega\in \tilde{\Omega}, t\in [0,T]$. Then, let
$\{(\Omega^{k},\mathcal{F}^{k},\mathbb{P}^{k})\}_{k\geq0}$ be
a sequence of independent copies of $(\tilde{\Omega},\tilde{\mathcal{F}},\tilde{\mathbb{P}})$ and let $(\Omega,\mathcal{F},\mathbb{P})$ be the direct
product of $\{(\Omega^{k},\mathcal{F}^{k},\mathbb{P}^{k})\}_{k\geq1}.$ For any $\omega=(\omega^{1},\omega^{2},\cdots)\in \Omega$
and $u,u^{\prime}\in H,$ we set $\tilde{u}_{0}=u,\tilde{u}_{0}^{\prime}=u^{\prime},$ and
\begin{equation*}
\begin{split}
\tilde{u}_{t}(\omega)=\mathcal{R}_{s}(\tilde{u}_{k}(\omega),\tilde{u}_{k}^{\prime}(\omega),\omega^{k}),&
~~\tilde{u}^{\prime}_{t}(\omega)=\mathcal{R}_{s}^{\prime}(\tilde{u}_{k}(\omega),\tilde{u}_{k}^{\prime}(\omega),\omega^{k}),\\
&\mathbf{u}_{t}=(\tilde{u}_{t},\tilde{u}^{\prime}_{t}),
\end{split}
\end{equation*}
where $t=s+kT, s\in[0,T).$ The construction implies that $(\mathbf{u}_{t}, \mathbb{P}_{\mathbf{u}})$ is an
extension for $(u_{t}, \mathbb{P}_{u}).$ We will show that it is a mixing extension for $(u_{t}, \mathbb{P}_{u}).$

\subsection{Moment estimates for solutions}
\begin{proposition}\label{pro5}
For any $p\geq1$, $u_{0}\in H, h\in H$, it holds that
\begin{equation}\label{2}
\begin{split}
\mathbb{E}\|u(t)\|^{2p}\leq e^{-pat}\|u_{0}\|^{2p}+\frac{C(p,h,B_{1})}{pa}~~\forall t\geq0
\end{split}
\end{equation}
and
\begin{equation}\label{5}
\begin{split}
\mathbb{E}\int_{0}^{t}\|u(s)\|^{2p}ds\leq C(u_{0},p,h,B_{1})(t+1)~~\forall t\geq0.
\end{split}
\end{equation}
\end{proposition}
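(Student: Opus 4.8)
The plan is to derive both estimates from an application of Itô's formula to the functional $\|u(t)\|^{2}$, followed by a bootstrap to the $2p$-th power. First I would apply Itô's formula to $\|u(t)\|^{2}$ using equation \eqref{KSE}. The deterministic terms produce: the damping contribution $-2a\|u\|^{2}$; the bilaplacian term $-2\|u_{xx}\|^{2}\le 0$ after integration by parts; the nonlinear term $2(uu_{x},u)=\tfrac{2}{3}\int_{\mathbb R}(u^{3})_{x}\,dx=0$, which vanishes because $u(\cdot,t)\in H$ decays at infinity; and the forcing term $2(h,u)\le \tfrac{a}{2}\|u\|^{2}+\tfrac{2}{a}\|h\|^{2}$ by Young's inequality (using $\varphi h\in H\Rightarrow h\in H$ from assumption \textbf{(A)}). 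The Itô correction term from the noise contributes $\sum_i b_i^2\|e_i\|^2 = B_1$. Collecting these,
\begin{equation*}
d\|u\|^{2}\le\Bigl(-\tfrac{3a}{2}\|u\|^{2}+\tfrac{2}{a}\|h\|^{2}+B_{1}\Bigr)dt+2\sum_{i}b_{i}(e_{i},u)\,d\beta_{i}.
\end{equation*}

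Next I would promote this to the $2p$-th moment. Apply Itô's formula to $\|u(t)\|^{2p}=(\|u\|^{2})^{p}$: the drift becomes $p\|u\|^{2(p-1)}$ times the drift above, plus the quadratic variation correction $\tfrac{p(p-1)}{2}\|u\|^{2(p-2)}\cdot 4\sum_i b_i^2(e_i,u)^2\le 2p(p-1)B_1\|u\|^{2(p-1)}$. Using Young's inequality $\|u\|^{2(p-1)}\le \varepsilon\|u\|^{2p}+C_\varepsilon$ on the lower-order terms (the $\|h\|^2$, $B_1$, and correction contributions all carry a factor $\|u\|^{2(p-1)}$), and absorbing $\varepsilon$ into the $-\tfrac{3ap}{2}\|u\|^{2p}$ term, I obtain
\begin{equation*}
d\|u\|^{2p}\le\bigl(-pa\|u\|^{2p}+C(p,h,B_{1})\bigr)dt+dM_{t},
\end{equation*}
where $M_t$ is a local martingale. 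Taking expectations (after a standard localization/stopping-time argument to control $M_t$, justified by the global well-posedness stated after assumption \textbf{(A)}) and applying Gronwall's inequality to $\tfrac{d}{dt}\mathbb E\|u(t)\|^{2p}\le -pa\,\mathbb E\|u(t)\|^{2p}+C$ yields \eqref{2}: $\mathbb E\|u(t)\|^{2p}\le e^{-pat}\|u_0\|^{2p}+\tfrac{C(p,h,B_1)}{pa}$.

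For \eqref{5}, I would return to the differential inequality before discarding the dissipative term, keeping $-\tfrac{3a}{2}p\|u\|^{2p}$ on the right-hand side rather than only using it to absorb errors. Integrating $d\|u\|^{2p}$ from $0$ to $t$, rearranging, and taking expectations gives $\tfrac{3ap}{2}\mathbb E\int_0^t\|u(s)\|^{2p}\,ds\le \|u_0\|^{2p}+C(p,h,B_1)t+\mathbb E M_t$, and since $\mathbb E M_t=0$ after localization, this is exactly \eqref{5} with $C(u_0,p,h,B_1)$ absorbing $\|u_0\|^{2p}$. The main obstacle — and the only place requiring genuine care — is the rigorous justification of the stopping-time argument: a priori the stochastic integral is only a local martingale, so one must truncate with $\tau_n=\inf\{t:\|u(t)\|\ge n\}$, derive the estimates on $[0,t\wedge\tau_n]$, and pass to the limit using Fatou's lemma together with the a priori bound that $\tau_n\to\infty$ almost surely (which follows from well-posedness). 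The vanishing of the nonlinear term $(uu_x,u)=0$ on the whole line is clean here precisely because we work in $H=L^2(\mathbb R)$ with no boundary; this is the feature that makes the basic energy estimate for KSE on $\mathbb R$ behave like a damped equation despite the destabilizing second-order term being absent from this particular model.
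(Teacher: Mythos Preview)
Your proposal is correct and follows essentially the same route as the paper: It\^{o}'s formula on $\|u\|^{2}$ (using $(uu_{x},u)=0$ and $(u_{xxxx},u)=\|u_{xx}\|^{2}$), then a second application to $(\|u\|^{2})^{p}$, Young's inequality on the lower-order terms, and Gronwall for \eqref{2}, with \eqref{5} obtained by integrating the differential inequality before discarding the dissipation. The only difference is that you are more explicit about the localization/stopping-time argument needed to justify $\mathbb{E}M_{t}=0$, which the paper omits.
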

\begin{proof}[Proof of Proposition \ref{pro5}]
By applying It\^{o} formula to $\|u\|^{2}$, it holds that
\begin{equation}\label{1}
\begin{split}
d\|u\|^{2}+2[a\|u\|^{2}+\|u_{xx}\|^{2}]dt=2(h,u)dt+B_{1}dt+2(u,dW),
\end{split}
\end{equation}
where $B_{1}=\sum\limits_{i=1}^{\infty}b_{i}^{2}$. This leads to
\begin{equation*}
\begin{split}
\frac{d}{dt}\mathbb{E}\|u\|^{2}+2[a\mathbb{E}\|u\|^{2}+\mathbb{E}\|u_{xx}\|^{2}]=2\mathbb{E}(h,u)+B_{1}\leq a\mathbb{E}\|u\|^{2}+\frac{\|h\|^{2}}{a}+B_{1},
\end{split}
\end{equation*}
this implies that
\begin{equation*}
\begin{split}
\frac{d}{dt}\mathbb{E}\|u\|^{2}+a\mathbb{E}\|u\|^{2}+2\mathbb{E}\|u_{xx}\|^{2}\leq \frac{\|h\|^{2}}{a}+B_{1},
\end{split}
\end{equation*}
thus, Gronwall inequality implies that
\begin{equation*}
\begin{split}
\mathbb{E}\|u(t)\|^{2}\leq e^{-at}[\|u_{0}\|^{2}+\frac{1}{a}(\frac{\|h\|^{2}}{a}+B_{1})].
\end{split}
\end{equation*}
\par
According to \eqref{1} and applying It\^{o} formula to $\|u\|^{2p}$, we can obtain that
\begin{equation*}
\begin{split}
d\|u\|^{2p}=&[2p\|u\|^{2p-2}(u,h-au-u_{xxxx}-uu_{x})+p\|u\|^{2p-2}B_{1}]dt
\\&+2p(p-1)\|u\|^{2(p-2)}(u,dW)^{2}+2p\|u\|^{2p-2}(u,dW).
\end{split}
\end{equation*}
Since $(u,dW)^{2}=\|u\|^{2}B_{1}dt$, we have
\begin{equation*}
\begin{split}
d\|u\|^{2p}=[2p\|u\|^{2p-2}(u,h-au-u_{xxxx}-uu_{x})+(2p^{2}-p)\|u\|^{2p-2}B_{1}]dt
+2p\|u\|^{2p-2}(u,dW).
\end{split}
\end{equation*}
This leads to
\begin{equation}\label{4}
\begin{split}
\|u(t)\|^{2p}+pa\int_{s}^{t}\|u(r)\|^{2p}dr\leq\|u(s)\|^{2p}+C(p,\|h\|,B_{1})(t-s)+2p\int_{s}^{t}\|u\|^{2p-2}(u,dW)
\end{split}
\end{equation}
and
\begin{equation*}
\begin{split}
\frac{d}{dt}\mathbb{E}\|u\|^{2p}&=2p\mathbb{E}\|u\|^{2p-2}(u,h-au-u_{xxxx}-uu_{x})+(2p^{2}-p)B_{1}\mathbb{E}\|u\|^{2p-2}\\
&=2p\mathbb{E}\|u\|^{2p-2}[(u,h)-a\|u\|^{2}-\|u_{xx}\|^{2}]+(2p^{2}-p)B_{1}\mathbb{E}\|u\|^{2p-2},
\end{split}
\end{equation*}
then,
\begin{equation*}
\begin{split}
\frac{d}{dt}\mathbb{E}\|u\|^{2p}+2pa\mathbb{E}\|u\|^{2p}+2p\mathbb{E}\|u\|^{2p-2}\|u_{xx}\|^{2}
&=2p\mathbb{E}\|u\|^{2p-2}(u,h)+(2p^{2}-p)B_{1}\mathbb{E}\|u\|^{2p-2}\\
&\leq \frac{pa}{4}\mathbb{E}\|u\|^{2p}+C(p,h,B_{1}),
\end{split}
\end{equation*}
this leads to
\begin{equation*}
\begin{split}
\frac{d}{dt}\mathbb{E}\|u\|^{2p}+pa\mathbb{E}\|u\|^{2p}+2p\mathbb{E}\|u\|^{2p-2}\|u_{xx}\|^{2}
\leq C(p,h,B_{1}),
\end{split}
\end{equation*}
the above estimates imply that \eqref{2} and \eqref{5} hold.
\par
This completes the proof.
\end{proof}

\subsection{Weighted estimates for solutions}
Inspired by \cite{A1989,N2}, let us introduce a smooth space-time weight function given by
\begin{equation*}
\begin{split}
\psi(t,x):=\varphi(x)(1-\exp(-\frac{t}{\varphi(x)})), ~~(t,x)\in \mathbb{R}^{2},
\end{split}
\end{equation*}
where $\varphi(x):=\ln (x^{2}+2)$. The following properties are useful for establishing Foia\c{s}-Prodi estimate.
\par
(1) $0<\psi(t,x)<\varphi(x)$ for $t>0$ and $\psi(0,x)=0$;
\par
(2) For any $k\geq1,$ $|\partial_{k}\psi|\leq C_{k}$, where $C_{k}$ is some constants;
\par
(3) $\lim\limits_{t,|x|\rightarrow +\infty}\psi(t,x)=+\infty.$
\par
We introduce the following functionals
\begin{equation*}
\begin{split}
&\mathcal{E}_{u}(t)=\|u(t)\|^{2}+a_{0}\int_{0}^{t}\|u\|_{2}^{2}ds,\\
&\mathcal{E}^{p}_{u}(t)=\|u(t)\|^{2p}+pa\int_{0}^{t}\|u\|^{2p}ds,\\
&\mathcal{F}^{\psi}_{u}(t)=\|(\psi u)(t)\|^{2}+a_{0}\int_{0}^{t}(\|\psi u_{xx}\|^{2}+\|\psi u\|^{2})ds,\\
&\mathcal{G}^{\psi}_{u}(t)=\|u(t)\|^{2}+\|(\psi u)(t)\|^{2}+a_{0}\int_{0}^{t}(\|u\|_{2}^{2}+\|\psi u\|_{2}^{2})ds,\\
&\mathcal{E}^{\psi}_{u}(t)=\mathcal{E}^{p_{1}}_{u}(t)+\mathcal{G}^{\psi}_{u}(t),~~{\rm{where}}~p_{1}=\frac{7}{3}.
\end{split}
\end{equation*}

\begin{proposition}\label{pro1}
There exist constants $K_{1},\gamma_{1}>0$ such that
\begin{equation}
\begin{split}
\mathbb{P}(\sup\limits_{t\geq 0}[\mathcal{E}_{u}(t+T)-\mathcal{E}_{u}(T)-K_{1}t]\geq \rho)\leq e^{-\gamma_{1}\rho},
\end{split}
\end{equation}
for all $\rho>0,T\geq 0,$ and $u\in H.$
\end{proposition}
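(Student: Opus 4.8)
The plan is to apply the Itô formula to $\|u\|^2$, isolate a supermartingale-type inequality for the quantity $\mathcal{E}_u(t+T)-\mathcal{E}_u(T)$, and then use an exponential (Doob-type) martingale estimate to control its supremum. First I would observe from \eqref{1} that for $s\geq T$,
\begin{equation*}
\begin{split}
\|u(t)\|^{2}+2\int_{T}^{t}[a\|u\|^{2}+\|u_{xx}\|^{2}]\,dr=\|u(T)\|^{2}+2\int_{T}^{t}(h,u)\,dr+B_{1}(t-T)+2\int_{T}^{t}(u,dW).
\end{split}
\end{equation*}
Using $2(h,u)\leq a\|u\|^2+\|h\|^2/a$ and the interpolation/equivalence $\|u\|_2^2\le C(\|u_{xx}\|^2+\|u\|^2)$ (so that $a_0\|u\|_2^2\le 2\|u_{xx}\|^2 + C\|u\|^2$ after absorbing constants, adjusting $a_0=a\wedge1$), I would arrive at
\begin{equation*}
\begin{split}
\mathcal{E}_{u}(t+T)-\mathcal{E}_{u}(T)\leq K_{1}(t)+M_{t},\qquad M_{t}:=2\int_{T}^{T+t}(u,dW),
\end{split}
\end{equation*}
with $K_1$ a constant depending on $a,h,B_1$ (here one must be a little careful: the leftover term $a\int_T^{t}\|u\|^2\,dr$ from the $2(h,u)$ bound should be absorbed by half of the dissipation $2a\int\|u\|^2$, leaving net linear growth $K_1 t$ plus the martingale). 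The bracket of $M_t$ is $[M]_{t}=4\int_{T}^{T+t}\|u\|^{2}B_{1}\,dr$, which is not bounded, so a direct exponential supermartingale bound on $M_t$ alone is not available — this is the main obstacle.

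To handle the unbounded quadratic variation, the standard device is to exponentiate a compensated combination: for a parameter $\gamma_1>0$ to be chosen small, consider the process
\begin{equation*}
\begin{split}
Z_{t}:=\exp\Big(\gamma_{1}M_{t}-\tfrac{1}{2}\gamma_{1}^{2}[M]_{t}\Big)=\exp\Big(\gamma_{1}M_{t}-2\gamma_{1}^{2}B_{1}\int_{T}^{T+t}\|u\|^{2}\,dr\Big),
\end{split}
\end{equation*}
which is a positive supermartingale with $Z_0=1$, hence $\mathbb{E}Z_t\le1$. The point is that the bad term $2\gamma_1^2 B_1\int\|u\|^2$ in the exponent can be dominated by the good dissipative term $pa\int\|u\|^2$ (or $a_0\int\|u\|_2^2$) that already appears with a favorable sign inside $\mathcal{E}_u$ — precisely, if $\gamma_1$ is chosen so small that $2\gamma_1^2 B_1\le a$ (say), then $2\gamma_1 M_t\le 2\gamma_1^2 B_1\int_T^{T+t}\|u\|^2\,dr+\log Z_t\le a\int_T^{T+t}\|u\|^2\,dr+\log Z_t$. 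Feeding this back, one gets $\gamma_1[\mathcal{E}_u(t+T)-\mathcal{E}_u(T)-K_1' t]\le \log Z_t + \text{(const)}$ for a possibly enlarged constant $K_1'$, where the extra $a\int\|u\|^2$ has been re-absorbed into the remaining half of the $\mathcal{E}_u$-dissipation. (If the dissipation budget is tight, one simply enlarges $K_1$; the inequality $a_0\|u\|_2^2 \le 2\|u_{xx}\|^2 + C\|u\|^2$ gives room because only a fraction of $2a\int\|u\|^2$ is spent on the drift term.)

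Finally I would run the supremum estimate. Since $Z_t$ is a nonnegative supermartingale with $\mathbb{E}Z_t\le1$, Doob's maximal inequality for supermartingales gives $\mathbb{P}(\sup_{t\ge0}Z_t\ge e^{\gamma_1\rho})\le e^{-\gamma_1\rho}$. Combining with the deterministic bound $\gamma_1[\mathcal{E}_u(t+T)-\mathcal{E}_u(T)-K_1 t]\le \log Z_t + C_0$ (absorbing $C_0$ into the threshold, i.e. replacing $\rho$ by $\rho-C_0/\gamma_1$, or just noting it only weakens the constant) yields
\begin{equation*}
\begin{split}
\mathbb{P}\Big(\sup_{t\ge0}\big[\mathcal{E}_u(t+T)-\mathcal{E}_u(T)-K_1 t\big]\ge\rho\Big)\le \mathbb{P}\Big(\sup_{t\ge0}Z_t\ge e^{\gamma_1\rho - C_0}\Big)\le e^{C_0}e^{-\gamma_1\rho},
\end{split}
\end{equation*}
and after relabelling $\gamma_1$ and $K_1$ this is exactly the claimed estimate; all constants are independent of $T$ and of $u\in H$ because the Itô formula is applied on the shifted interval $[T,T+t]$ and the Markov/stationarity of the noise increments makes everything translation-invariant in $t$. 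The only genuinely delicate point, as noted, is the bookkeeping that ensures the quadratic-variation term $\gamma_1^2[M]_t$ is swallowed by the dissipation already present in $\mathcal{E}_u$; the rest is routine.
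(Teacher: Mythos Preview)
Your proposal is correct and follows essentially the same route as the paper: apply It\^o's formula to $\|u\|^2$, bound $2(h,u)$ by Young's inequality, use part of the dissipation $a\int\|u\|^2$ to dominate $\tfrac{\gamma_1}{2}\langle M\rangle_t\le 2\gamma_1 B_1\int\|u\|^2$ for $\gamma_1$ small, and conclude via the exponential supermartingale inequality applied to $\exp(\gamma_1 M_t-\tfrac{\gamma_1^2}{2}\langle M\rangle_t)$. The paper is slightly more direct in that it writes the single inequality $\mathcal{E}_u(t+T)-\mathcal{E}_u(T)\le K_1 t + M(t)-\tfrac{\gamma_1}{2}\langle M\rangle(t)$ up front (so no extra constant $C_0$ appears), whereas you split this into two steps; but the content is identical. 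Two small cleanups: the quadratic variation satisfies $\langle M\rangle_t=4\int\sum_i b_i^2(u,e_i)^2\le 4B_1\int\|u\|^2$ (inequality, not equality), and in your line ``$2\gamma_1 M_t\le\ldots$'' the factor $2$ is spurious --- from $\log Z_t=\gamma_1 M_t-\tfrac12\gamma_1^2\langle M\rangle_t$ one gets $\gamma_1 M_t\le \log Z_t+2\gamma_1^2 B_1\int\|u\|^2$.
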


\begin{proof}[Proof of Proposition \ref{pro1}]
It follows from \eqref{1} that
\begin{equation*}
\begin{split}
\mathcal{E}_{u}(t+T)-\mathcal{E}_{u}(T)+2\int_{T}^{t+T}[a\|u(s)\|^{2}+\|u_{xx}(s)\|^{2}]ds
= 2\int_{T}^{t+T}(u,h)ds+B_{1}t+M(t),
\end{split}
\end{equation*}
where $M(t):=2\int_{T}^{t+T}(u,dW)=2\int_{T}^{t+T}\sum_{i=1}^{\infty}b_{i}(u,e_{i})d\beta_{i},$ it is easy to see that $M(t)$ is
a martingale and its quadratic variation is given by
\begin{equation*}
\begin{split}
\langle M\rangle(t)=4\int_{T}^{t+T}\sum_{i=1}^{\infty}b_{i}^{2}(u(s),e_{i})^{2}ds\leq 4B_{1}\int_{T}^{t+T}\|u(s)\|^{2}ds.
\end{split}
\end{equation*}
Noting the fact
\begin{equation*}
\begin{split}
\int_{T}^{t+T}(u,h)ds\leq a\int_{T}^{t+T}\|u(s)\|^{2}ds+\frac{\|h\|^{2}}{a}t,
\end{split}
\end{equation*}
there exist some suitable positive constants $K_{1}$ and $\gamma_{1}$ such that
\begin{equation*}
\begin{split}
\mathcal{E}_{u}(t+T)-\mathcal{E}_{u}(T)\leq K_{1}t+M(t)-\frac{\gamma_{1}}{2}\langle M\rangle(t),
\end{split}
\end{equation*}
exponential supermartingale inequality implies that
\begin{equation*}
\begin{split}
\mathbb{P}(\sup_{t\geq0}(\mathcal{E}_{u}(t+T)-K_{1}t)\geq \mathcal{E}_{u}(T)+\rho)
&\leq \mathbb{P}(\sup_{t\geq0}(\gamma_{1} M(t)-\frac{\gamma_{1}^{2}}{2}\langle M\rangle(t))\geq \gamma_{1}\rho)\\
&= \mathbb{P}(\sup_{t\geq0}\exp(\gamma_{1} M(t)-\frac{1}{2}\langle \gamma_{1} M\rangle(t))\geq e^{\gamma_{1}\rho})\\
&\leq e^{-\gamma_{1} \rho}.
\end{split}
\end{equation*}
\par
This completes the proof.
\end{proof}

\begin{proposition}\label{pro2}
For any $p\geq2, q>1$, there exists a constant $L_{p}>0$ such that
\begin{equation*}
\begin{split}
\mathbb{P}(\sup\limits_{t\geq 0}[\mathcal{E}^{p}_{u}(t+T)-\mathcal{E}^{p}_{u}(T)-(L_{p}+1)t-2]\geq \rho)\leq CQ_{q}(\rho+1),
\end{split}
\end{equation*}
for all $\rho>0,T\geq 0,$ and $u\in H$, where $C$ is a constant depends on $u,q,h,B_{1},B_{2}$.
\end{proposition}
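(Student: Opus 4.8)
The plan is to follow the strategy of Proposition~\ref{pro1}, reducing the assertion to a tail bound for a stochastic integral. The new feature is that for $p\ge2$ the relevant martingale carries a quadratic variation that the dissipation cannot absorb, so the exponential supermartingale inequality has to be replaced by a polynomial moment estimate; this is exactly what downgrades the exponential tail to the polynomial tail $Q_{q}$.

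\emph{Step 1 (reduction).} I would start from \eqref{4} with $s=T$, replace $t$ by $t+T$, and use $\mathcal{E}^{p}_{u}(t)=\|u(t)\|^{2p}+pa\int_{0}^{t}\|u\|^{2p}\,ds$ to get
\begin{equation*}
\mathcal{E}^{p}_{u}(t+T)-\mathcal{E}^{p}_{u}(T)\le C_{0}\,t+M_{p}(t),\qquad M_{p}(t):=2p\int_{T}^{t+T}\|u\|^{2p-2}(u,dW),
\end{equation*}
with $C_{0}=C(p,\|h\|,B_{1})$, where $M_{p}$ is a continuous martingale with $M_{p}(0)=0$ (a genuine martingale, with finite moments of every order, by Proposition~\ref{pro5} and a routine localization) and
\begin{equation*}
\langle M_{p}\rangle(t)=4p^{2}\int_{T}^{t+T}\sum_{i\ge1}b_{i}^{2}\|u\|^{4p-4}(u,e_{i})^{2}\,ds\le 4p^{2}B_{1}\int_{T}^{t+T}\|u\|^{4p-2}\,ds .
\end{equation*}
Taking $L_{p}:=C_{0}$, the event in the statement is contained in $\{\sup_{t\ge0}(M_{p}(t)-t)\ge\rho+2\}$, so it suffices to prove $\mathbb{P}(\sup_{t\ge0}(M_{p}(t)-t)\ge r)\le CQ_{q}(r-1)$ for $r\ge2$.

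\emph{Step 2 (dyadic splitting and moments).} If $\sup_{t\ge0}(M_{p}(t)-t)\ge r$, then for some integer $n\ge0$ there is $t\in[n,n+1]$ with $M_{p}(t)\ge r+t\ge r+n$, hence
\begin{equation*}
\mathbb{P}\Big(\sup_{t\ge0}(M_{p}(t)-t)\ge r\Big)\le\sum_{n\ge0}\mathbb{P}\Big(\sup_{0\le s\le n+1}M_{p}(s)\ge r+n\Big).
\end{equation*}
Fixing an integer $m\ge q$, I would apply the Burkholder--Davis--Gundy inequality, Jensen's inequality in time on $[T,T+n+1]$, and the moment bound \eqref{2} of Proposition~\ref{pro5} with power $(4p-2)m$ (uniform in $T$) to obtain $\mathbb{E}\sup_{0\le s\le n+1}|M_{p}(s)|^{2m}\le C(n+1)^{m}$ with $C=C(\|u\|,q,h,B_{1})$. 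Chebyshev's inequality and $(n+1)/(r+n)\le1$ (valid since $r\ge2$) then give
\begin{equation*}
\mathbb{P}\Big(\sup_{t\ge0}(M_{p}(t)-t)\ge r\Big)\le C\sum_{n\ge0}\frac{(n+1)^{m}}{(r+n)^{2m}}\le C\sum_{n\ge0}\frac{1}{(r+n)^{m}}\le C\,(r-1)^{1-q},
\end{equation*}
where the last inequality compares the sum with $\int_{r-1}^{\infty}x^{-m}\,dx$ and uses $m\ge q$ and $r-1\ge1$. With $r=\rho+2$ this is precisely $CQ_{q}(\rho+1)$, the constant depending only on $\|u\|$, $q$ and the data of \eqref{KSE}, and in particular not on $T$.

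\emph{Main obstacle.} The crux is Step~1: for $p\ge2$ the martingale $M_{p}$ has quadratic variation of order $\int\|u\|^{4p-2}$, a strictly higher power of $\|u\|$ than the dissipation $\int\|u\|^{2p}$ supplied by the term $u_{xxxx}$, so the exponential supermartingale inequality used in Proposition~\ref{pro1} is not available; this is exactly why one is forced from an exponential tail down to the polynomial tail $Q_{q}$, recovered via the dyadic-in-time decomposition fed by the high moment estimates of Proposition~\ref{pro5}. The secondary point needing care is that every constant stays independent of $T$, which is guaranteed by the dissipative, time-uniform bound \eqref{2}.
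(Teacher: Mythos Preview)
Your proof is correct and follows essentially the same route as the paper: reduce via \eqref{4} to a tail estimate for the martingale $M_{p}$, split the supremum over unit time intervals, control $\mathbb{E}\sup_{s\le n+1}|M_{p}(s)|^{2m}$ by BDG and the high moment bounds from Proposition~\ref{pro5}, and sum the resulting Chebyshev estimates. The only cosmetic differences are that the paper uses the integrated bound \eqref{5} rather than the pointwise bound \eqref{2} (both work after Jensen in time), and applies Chebyshev with the real exponent $2q$ rather than an integer $2m\ge 2q$; neither affects the argument or the conclusion.
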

\begin{proof}[Proof of Proposition \ref{pro2}]
It follows from \eqref{4} that
\begin{equation}\label{7}
\begin{split}
\mathcal{E}^{p}_{u}(t+T)-\mathcal{E}^{p}_{u}(T)&=\|u(t+T)\|^{2p}-\|u(T)\|^{2p}+pa\int_{T}^{t+T}\|u(s)\|^{2p}ds\\
&\leq L_{p}t+M^{p}(t),
\end{split}
\end{equation}
where $M^{p}(t):=2p\int_{T}^{t+T}\|u\|^{2p-2}(u,dW)$. It follows from \eqref{7} that
\begin{equation*}
\begin{split}
\mathcal{E}^{p}_{u}(t+T)-\mathcal{E}^{p}_{u}(T)-(L_{p}+1)t-2\leq M^{p}(t)-t-2.
\end{split}
\end{equation*}
\par
We define $M^{p}_{*}(t)=\sup_{s\in [0,t]}|M^{p}(s)|.$
For any $q>1$, according to the Burkholder-Davis-Gundy and H\"{o}lder inequalities, we have
\begin{equation}\label{6}
\begin{split}
\mathbb{E}(M^{p}_{*}(t))^{2q}
&\leq C\mathbb{E}\langle M^{p}(t)\rangle^{q}
\leq C(p,q)\mathbb{E}(\int_{T}^{t+T}\|u\|^{4p-2}ds)^{q}\\
&\leq C(p,q)t^{q-1}\mathbb{E}\int_{T}^{t+T}\|u\|^{(4p-2)q}ds
\leq C(u_{0},p,q,h,B_{1})(t+1)^{q}.
\end{split}
\end{equation}
Noting the following fact for all $\rho>0,$ it holds that
\begin{equation*}
\begin{split}
\{\sup\limits_{t\geq 0}[M^{p}(t)-t-2]\geq \rho\}
\subset &\bigcup\limits_{m\geq 0}\{\sup\limits_{t\in [m,m+1)}[M^{p}(t)-t-2]\geq \rho\}\\
\subset &\bigcup\limits_{m\geq 0}\{M_{*}^{p}(m+1)\geq \rho+m+2\}.
\end{split}
\end{equation*}
For any $q>1$, it follows from the above analysis, the maximal martingale inequality, the Chebyshev inequality and \eqref{6} that
\begin{equation*}
\begin{split}
\mathbb{P}\{\sup\limits_{t\geq 0}[M^{p}(t)-t-2]\geq \rho\}
&\leq\mathbb{P}(\bigcup\limits_{m\geq 0}\{M_{*}^{p}(m+1)\geq \rho+m+2\})\\
&\leq\sum\limits_{m\geq 0}\mathbb{P}(M_{*}^{p}(m+1)\geq \rho+m+2)\\
&\leq\sum\limits_{m\geq 0}\frac{\mathbb{E}(M_{*}^{p}(m+1))^{2q}}{(\rho+m+2)^{2q}}\\
&\leq C(u_{0},p,q,h,B_{1})\sum\limits_{m\geq 0}\frac{(m+2)^{q}}{(\rho+m+2)^{2q}}\\
&\leq C(u_{0},p,q,h,B_{1})\sum\limits_{m\geq 0}\frac{1}{(\rho+m+2)^{q}}\\
&\leq \frac{C(u_{0},p,q,\|h\|,B_{1})}{(\rho+1)^{q-1}}.
\end{split}
\end{equation*}
\par
This completes the proof.
\end{proof}

\begin{proposition}\label{pro3}
For any $q>1$, there exist two positive constants $K_{2}$ and $M_{2}$ such that
\begin{equation*}
\begin{split}
\mathbb{P}(\sup\limits_{t\geq 0}[\mathcal{F}^{\psi}_{u}(t+T)-\mathcal{F}^{\psi}_{u}(T)-K_{2}t-M_{2}(\|u(T)\|^{2}+\|u(T)\|^{2p_{1}}+1)]\geq \rho)
\leq CQ_{q}(\rho+1),
\end{split}
\end{equation*}
for all $\rho>0,T\geq 0$ and $u\in H$, where $C$ is a constant depends on $u,q,h,B_{1},B_{2}$.
\end{proposition}

\begin{proof}[Proof of Proposition \ref{pro3}]
By applying It\^{o} formula to $\|\psi u\|^{2}$, it holds that
\begin{equation*}
\begin{split}
d\|\psi u\|^{2}=2(\psi u,\psi(h-uu_{x}-au-u_{xxxx})+\psi_{t} u)dt+\sum\limits_{i=1}^{\infty}b_{i}^{2}\|\psi e_{i}\|^{2}dt
+2\sum\limits_{i=1}^{\infty}b_{i}(\psi u,\psi e_{i})d\beta_{i},
\end{split}
\end{equation*}
this implies that
\begin{equation}\label{12}
\begin{split}
&\|(\psi u)(t+T)\|^{2}-\|(\psi u)(T)\|^{2}+2\int_{T}^{t+T}(\psi^{2} u,u_{xxxx})ds+2a\int_{T}^{t+T}\|\psi u\|^{2}ds\\
=&2\int_{T}^{t+T}(\psi^{2} u,h-uu_{x})ds+2\int_{T}^{t+T}(\psi u,\psi_{t} u)ds+\int_{T}^{t+T}B_{2}^{\psi}(s)ds+M_{2}(t),
\end{split}
\end{equation}
where $B_{2}^{\psi}(t)=\sum\limits_{i=1}^{\infty}b_{i}^{2}\|\psi e_{i}\|^{2},M_{2}(t)=2\int_{T}^{t+T}\sum\limits_{i=1}^{\infty}b_{i}(\psi u,\psi e_{i})d\beta_{i}$
and we have the following estimate
\begin{equation*}
\begin{split}
&B_{2}^{\psi}(t)\leq B_{2},\\
&\langle M_{2}(t)\rangle=4\sum\limits_{i=1}^{\infty}b_{i}^{2}\int_{T}^{t+T}(\psi u,\psi e_{i})^{2}ds\leq 4B_{2}\int_{T}^{t+T}\|\psi u\|^{2}ds.
\end{split}
\end{equation*}
We take $\gamma_{2}=\frac{a}{8B_{2}}$, then
\begin{equation*}
\begin{split}
\gamma_{2}\langle M_{2}(t)\rangle \leq \frac{a}{2}\int_{T}^{t+T}\|\psi u\|^{2}ds.
\end{split}
\end{equation*}
Noting the fact
\begin{equation*}
\begin{split}
(\psi^{2} u,u_{xxxx})=((\psi^{2} u)_{xx},u_{xx})=\int(\psi^{2} u_{xx}^{2}+2\psi_{x}^{2} uu_{xx}+2\psi\psi_{xx}uu_{xx}+4\psi\psi_{x}u_{x}u_{xx})dx,
\end{split}
\end{equation*}
where
\begin{equation*}
\begin{split}
&\int\psi_{x}^{2} uu_{xx}dx\leq C\|u\|_{2}^{2},\\
&\int \psi\psi_{xx}uu_{xx} dx\leq \delta \|\psi u_{xx}\|^{2}+C\|u\|^{2},\\
&\int\psi\psi_{x}u_{x}u_{xx}dx\leq\delta \|\psi u_{xx}\|^{2}+C\|u_{x}\|^{2},
\end{split}
\end{equation*}
thus, we have
\begin{equation*}
\begin{split}
|\int(2\psi_{x}^{2} uu_{xx}+2\psi\psi_{xx}uu_{xx}+4\psi\psi_{x}u_{x}u_{xx})dx|\leq \delta\|\psi u_{xx}\|^{2}+C\|u\|_{2}^{2}.
\end{split}
\end{equation*}
By the same calculations, we have
\begin{equation*}
\begin{split}
(\psi^{2} u,h)&\leq \delta \|\psi u\|^{2}+C_{\delta}\|\psi h\|^{2},\\
(\psi u,\psi_{t} u)&\leq\delta \|\psi u\|^{2}+C_{\delta}\|u\|^{2},\\
|(\psi^{2} u,uu_{x})|&\leq C|(\psi u,\psi_{x} u^{2})|\\
&\leq \delta \|\psi u\|^{2}+C_{\delta}\int u^{4}dx\\
& \leq \delta \|\psi u\|^{2}+C_{\delta}\|u\|_{2}^{\frac{1}{2}}\|u\|^{\frac{7}{2}}\\
& \leq \delta \|\psi u\|^{2}+C_{\delta}(\|u\|_{2}^{2}+\|u\|^{2p_{1}}).
\end{split}
\end{equation*}
Substituting the above estimates in \eqref{12}, by taking $\delta>0$ small enough, we have
\begin{equation}\label{FP1}
\begin{split}
&\|(\psi u)(t+T)\|^{2}-\|(\psi u)(T)\|^{2}+a_{0}\int_{T}^{t+T}(\|\psi u_{xx}\|^{2}+\|\psi u\|^{2})ds\\
\leq&(B_{2}+\|\varphi h\|^{2})t+M_{2}(t)-\gamma_{2}\langle M_{2}(t)\rangle+C\int_{T}^{t+T}\|u(s)\|_{2}^{2}ds+C\int_{T}^{t+T}\|u(s)\|^{2p_{1}}ds.
\end{split}
\end{equation}
\par
Since
\begin{equation*}
\begin{split}
&C\int_{T}^{t+T}\|u\|_{2}^{2}ds\leq C_{2}(\mathcal{E}_{u}(t+T)-\mathcal{E}_{u}(T)+\|u(T)\|^{2}),\\
&C\int_{T}^{t+T}\|u\|^{2p_{1}}ds\leq C_{p_{1}}(\mathcal{E}^{p_{1}}_{u}(t+T)-\mathcal{E}^{p_{1}}_{u}(T)+\|u(T)\|^{2p_{1}}),
\end{split}
\end{equation*}
\eqref{FP1} implies that
\begin{equation*}
\begin{split}
\mathcal{F}^{\psi}_{u}(t+T)-\mathcal{F}^{\psi}_{u}(T)
\leq&(B_{2}+\|\varphi h\|^{2})t+(M_{2}(t)-\gamma_{2}\langle M_{2}(t)\rangle)+C_{2}(\mathcal{E}_{u}(t+T)-\mathcal{E}_{u}(T)+\|u(T)\|^{2})\\
&+C_{p_{1}}(\mathcal{E}^{p_{1}}_{u}(t+T)-\mathcal{E}^{p_{1}}_{u}(T)+\|u(T)\|^{2p_{1}}),
\end{split}
\end{equation*}
namely, we have
\begin{equation*}
\begin{split}
&\mathcal{F}^{\psi}_{u}(t+T)-\mathcal{F}^{\psi}_{u}(T)-[B_{2}+\|\varphi h\|^{2}+C_{2}K_{1}+C_{p_{1}}(L_{p_{1}}+1)]t-C_{2}\|u(T)\|^{2}-C_{p_{1}}\|u(T)\|^{2p_{1}}-2C_{p_{1}}\\
\leq&(M_{2}(t)-\gamma_{2}\langle M_{2}(t)\rangle)+C_{2}(\mathcal{E}_{u}(t+T)-\mathcal{E}_{u}(T)-K_{1}t)\\
&+C_{p_{1}}(\mathcal{E}^{p_{1}}_{u}(t+T)-\mathcal{E}^{p_{1}}_{u}(T)-(L_{p_{1}}+1)t-2),
\end{split}
\end{equation*}
here, we take $K_{2}=B_{2}+\|\varphi h\|^{2}+C_{2}K_{1}+C_{p_{1}}(L_{p_{1}}+1), M_{2}=\max(C_{2},2C_{p_{1}})$. According to Proposition \ref{pro1}, Proposition \ref{pro2} and exponential supermartingale inequality, we can prove the desired result.
\par
This completes the proof.
\end{proof}

\begin{proposition}\label{proM}
For any $q>1$, there exist a $Q_{q}$ and positive constants $\mathcal{K}$ and $\mathcal{M}$ such that
\begin{equation*}
\begin{split}
\mathbb{P}\left(\sup\limits_{t\geq 0}[\mathcal{G}^{\psi}_{u}(t+T)-\mathcal{G}^{\psi}_{u}(T)-\mathcal{K}t-\mathcal{M}(\|u(T)\|^{2}+\|u(T)\|^{2p_{1}}+1)]\geq \rho\right)
\leq CQ_{q}(\rho+1),
\end{split}
\end{equation*}
for all $\rho>0,T\geq 0,$ and $u\in H,$ where $C$ is a constant depends on $u,q,h,B_{1},B_{2}$.
\end{proposition}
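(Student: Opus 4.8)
The proposition to prove is Proposition~\ref{proM}, the estimate for
$\mathcal{E}^{\psi}_{u}(t)=\|u(t)\|^{2}+\|(\psi u)(t)\|^{2}+a_{0}\int_{0}^{t}(\|u\|_{2}^{2}+\|\psi u\|_{2}^{2})\,ds$.
The key observation is that $\mathcal{E}^{\psi}_{u}$ decomposes, essentially by definition, as a sum of pieces each of which has already been controlled: the $\|u(t)\|^{2}+a_{0}\int\|u\|_{2}^{2}$ part is exactly $\mathcal{E}_{u}(t)$ from Proposition~\ref{pro1}, and the $\|(\psi u)(t)\|^{2}+a_{0}\int\|\psi u\|_{2}^{2}$ part is close to $\mathcal{F}^{\psi}_{u}(t)=\|(\psi u)(t)\|^{2}+a_{0}\int(\|\psi u_{xx}\|^{2}+\|\psi u\|^{2})$, which is handled by Proposition~\ref{pro3}. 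So the strategy is to bound $\mathcal{E}^{\psi}_{u}$ by $\mathcal{E}_{u}$ plus $\mathcal{F}^{\psi}_{u}$ plus lower-order terms, and then invoke the two previous propositions together with subadditivity of the probability of a union.

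First I would write $\|\psi u\|_{2}^{2}\lesssim \|\psi u_{xx}\|^{2}+\|\psi u\|^{2}+(\text{terms involving derivatives of }\psi\text{ hitting }u)$; since $\psi=\varphi=\ln(x^{2}+2)$ has bounded derivatives $\psi_{x},\psi_{xx}$, the commutator terms are dominated by $C\|u\|_{2}^{2}$. Hence
$$a_{0}\int_{T}^{t+T}\|\psi u\|_{2}^{2}\,ds\leq C\Big(\mathcal{F}^{\psi}_{u}(t+T)-\mathcal{F}^{\psi}_{u}(T)\Big)+C\int_{T}^{t+T}\|u\|_{2}^{2}\,ds,$$
and the last integral is in turn bounded by $C(\mathcal{E}_{u}(t+T)-\mathcal{E}_{u}(T)+\|u(T)\|^{2})$, exactly as in the proof of Proposition~\ref{pro3}. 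Putting these together gives a pointwise (in $\omega$) inequality of the shape
$$\mathcal{E}^{\psi}_{u}(t+T)-\mathcal{E}^{\psi}_{u}(T)\leq C_{1}\big(\mathcal{E}_{u}(t+T)-\mathcal{E}_{u}(T)\big)+C_{2}\big(\mathcal{F}^{\psi}_{u}(t+T)-\mathcal{F}^{\psi}_{u}(T)\big)+C_{3}\|u(T)\|^{2},$$
valid for all $t\geq0$ simultaneously.

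Next I would absorb the linear-in-$t$ drifts and the initial-data terms from Propositions~\ref{pro1} and~\ref{pro3}: choosing $K_{3}=C_{1}K_{1}+C_{2}K_{2}$ and $M_{3}$ large enough (depending on $C_{1},C_{2},C_{3},M_{2}$), one gets
$$\Big\{\sup_{t\geq0}\big[\mathcal{E}^{\psi}_{u}(t+T)-\mathcal{E}^{\psi}_{u}(T)-K_{3}t-M_{3}(\|u(T)\|^{2}+\|u(T)\|^{2p_{1}}+1)\big]\geq\rho\Big\}$$
is contained in the union of the event $\{\sup_{t}[\mathcal{E}_{u}(t+T)-\mathcal{E}_{u}(T)-K_{1}t]\geq \rho/(2C_{1})\}$ and the event $\{\sup_{t}[\mathcal{F}^{\psi}_{u}(t+T)-\mathcal{F}^{\psi}_{u}(T)-K_{2}t-M_{2}(\|u(T)\|^{2}+\|u(T)\|^{2p_{1}}+1)]\geq \rho/(2C_{2})\}$. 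Proposition~\ref{pro1} bounds the first by $e^{-\gamma_{1}\rho/(2C_{1})}\leq C Q_{q}(\rho+1)$ (since an exponential tail is stronger than any polynomial one), and Proposition~\ref{pro3} bounds the second by $CQ_{q}(\rho/(2C_{2})+1)\leq CQ_{q}(\rho+1)$ up to adjusting the constant; summing gives the claim.

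The main obstacle — and it is a minor, bookkeeping one rather than a conceptual one — is making sure the constants line up: the ``$\rho$'' threshold in Proposition~\ref{pro3} must be split so that both pieces receive a constant multiple of $\rho$, and one must check that $Q_{q}(\lambda\rho+1)\leq C Q_{q}(\rho+1)$ for $\lambda>0$, which is immediate since $Q_{q}(r)=r^{1-q}$ and $q>1$. The only genuinely substantive estimate is the interpolation controlling $\|\psi u\|_{2}^{2}$ by $\|\psi u_{xx}\|^{2}+\|u\|_{2}^{2}$, but this is the same type of weighted computation already carried out in detail in the proof of Proposition~\ref{pro3}, so it can be quoted. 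I would therefore present the proof as: (i) the pointwise decomposition inequality for $\mathcal{E}^{\psi}_{u}$, (ii) the reduction to a union of the two events controlled by Propositions~\ref{pro1} and~\ref{pro3}, (iii) the constant-chasing to conclude $CQ_{q}(\rho+1)$.
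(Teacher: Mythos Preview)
Your proposal is correct and follows essentially the same route as the paper's own proof: the key step is the pointwise interpolation $\|\psi u\|_{2}^{2}\leq C(\|\psi u\|^{2}+\|u\|^{2}+\|u_{x}\|^{2}+\|\psi u_{xx}\|^{2})$, which lets one bound $\mathcal{E}^{\psi}_{u}(t+T)-\mathcal{E}^{\psi}_{u}(T)$ by a constant multiple of $\mathcal{E}_{u}(t+T)-\mathcal{E}_{u}(T)$ plus $\mathcal{F}^{\psi}_{u}(t+T)-\mathcal{F}^{\psi}_{u}(T)$ plus $C\|u(T)\|^{2}$, after which Propositions~\ref{pro1} and~\ref{pro3} and a union bound finish the job. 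One small slip: you write ``$\psi=\varphi=\ln(x^{2}+2)$'', but in the paper $\psi(t,x)=\varphi(x)(1-e^{-t/\varphi(x)})$ is the time-dependent weight; this does not affect your argument, since the needed property is only that the spatial derivatives $\partial_{x}^{k}\psi$ are bounded uniformly in $t$, which is recorded in the paper and is what makes the commutator terms controllable by $\|u\|_{2}^{2}$.
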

\begin{proof}[Proof of Proposition \ref{proM}]
Since $\|\psi u\|_{2}^{2}\leq C(\|\psi u\|^{2}+\|u\|^{2}+\|u_{x}\|^{2}+\|\psi u_{xx}\|^{2})$, we have
\begin{equation*}
\begin{split}
\mathcal{G}^{\psi}_{u}(t+T)-\mathcal{G}^{\psi}_{u}(T)
&\leq \mathcal{E}_{u}(t+T)-\mathcal{E}_{u}(T)+\mathcal{F}^{\psi}_{u}(t+T)-\mathcal{F}^{\psi}_{u}(T)+C\int_{T}^{t+T}(\|u\|^{2}+\|u_{x}\|^{2})ds\\
&\leq C_{3}(\mathcal{E}_{u}(t+T)-\mathcal{E}_{u}(T))+\mathcal{F}^{\psi}_{u}(t+T)-\mathcal{F}^{\psi}_{u}(T)+C_{3}\|u(T)\|^{2}.
\end{split}
\end{equation*}
With the help of Proposition \ref{pro1}, Proposition \ref{pro2}, Proposition \ref{pro3}, we can find suitable $\mathcal{K}$ and $\mathcal{M}$.
\par
This completes the proof.
\end{proof}
\begin{proposition}\label{proM+}
For any $q>1$, there exist a $Q_{q}$ and positive constants $\mathcal{K}$ and $\mathcal{M}$ such that
\begin{equation*}
\begin{split}
\mathbb{P}\left(\sup\limits_{t\geq 0}[\mathcal{E}^{\psi}_{u}(t+T)-\mathcal{E}^{\psi}_{u}(T)-\mathcal{K}t-\mathcal{M}(\|u(T)\|^{2}+\|u(T)\|^{2p_{1}}+1)]\geq \rho\right)
\leq CQ_{q}(\rho+1),
\end{split}
\end{equation*}
for all $\rho>0,T\geq 0,$ and $u\in H,$ where $C$ is a constant depends on $u,q,h,B_{1},B_{2}$.
\end{proposition}
\begin{proof}[Proof of Proposition \ref{proM+}]
Since $\mathcal{E}^{\psi}_{u}(t)=\mathcal{E}^{p_{1}}_{u}(t)+\mathcal{G}^{\psi}_{u}(t)$, with the help of Proposition \ref{pro2} and Proposition \ref{proM}, we can find suitable $\mathcal{K}$ and $\mathcal{M}$.
\par
This completes the proof.
\end{proof}

We introduce the following stopping time
\begin{equation*}
\begin{split}
\tau^{u}:=\inf \{t\geq 0~|~\mathcal{E}^{\psi}_{u}(t)\geq (K+L)t+\rho+M(\|u(0)\|^{2}+\|u(0)\|^{2p_{1}}+1)\}.
\end{split}
\end{equation*}

\begin{proposition}\label{proT1}
For any $q>1$, there exists a $Q_{q}$ such that if $K\geq \mathcal{K}$ and $M\geq 1+\mathcal{M}$
\begin{equation}\label{14}
\begin{split}
\mathbb{P}(l\leq\tau^{u}<+\infty)\leq CQ_{q}(\rho+Ll+1),
\end{split}
\end{equation}
for all $L,l\geq0,$ and $u\in H,$ where $C$ is a constants depends on $u,q,h,B_{1},B_{2}$.
\end{proposition}
\begin{proof}[Proof of Proposition \ref{proT1}]
On the event $\{l\leq\tau^{u}<\infty\},$ the definition of $\tau^{u}$ implies that
\begin{equation*}
\begin{split}
\mathcal{E}^{\psi}_{u}(\tau^{u})
&\geq (K+L)\tau^{u}+\rho+M(\|u(0)\|^{2}+\|u(0)\|^{2p_{1}}+1)\\
&\geq \mathcal{K}\tau^{u}+Ll+\|u(0)\|^{2}+\|u(0)\|^{2p_{1}}+\mathcal{M}(\|u(0)\|^{2}+\|u(0)\|^{2p_{1}}+1)+\rho\\
&\geq \mathcal{E}^{\psi}_{u}(0)+\mathcal{K}\tau^{u}+\mathcal{M}(\|u(0)\|^{2}+\|u(0)\|^{2p_{1}}+1)+Ll+\rho,
\end{split}
\end{equation*}
thus, we have
\begin{equation*}
\begin{split}
\sup\limits_{t\geq 0}[\mathcal{E}^{\psi}_{u}(t)-\mathcal{E}^{\psi}_{u}(0)-\mathcal{K}t-\mathcal{M}(\|u(0)\|^{2}+\|u(0)\|^{2p_{1}}+1)]\geq Ll+\rho,
\end{split}
\end{equation*}
with the help of Proposition \ref{proM}, we have \eqref{14}.
\par
This completes the proof.
\end{proof}

\section{Foia\c{s}-Prodi estimate }
Foia\c{s}-Prodi estimate was firstly established in \cite{FPE}, now, it is a powerful tool to
establish the ergodicity for SPDEs, and it is often used to compensate the degeneracy of the noise, see \cite{KS12, PMJEE2005, PMJEE2008, N2} and the references therein.
Now, we establish Foia\c{s}-Prodi estimate of hyperviscous Burgers equation on the whole line.

\subsection{Foia\c{s}-Prodi estimate }
Let us consider the following equations
\begin{equation*}
\begin{split}
&u_{t}+au+u_{xxxx}+uu_{x}=h+\eta ~~{\rm{in}}~\mathbb{R},\\
&v_{t}+av+v_{xxxx}+vv_{x}+P_{N}(uu_{x}-vv_{x})=h+\eta ~~{\rm{in}}~\mathbb{R}.
\end{split}
\end{equation*}
Let $w=u-v,$ then we have
\begin{theorem}\label{FP}
For $w$, we have the following estimates.
\par
(1) There exists a constant $C>0$ such that
\begin{equation*}
\begin{split}
\|w(t)\|_{1}^{2}\leq \|w(s)\|_{1}^{2}\exp\left (-a(t-s)+C\int_{s}^{t}(\|u\|_{1}^{2}+\|v\|_{1}^{2})dr\right),
\end{split}
\end{equation*}
for any $t\geq s\geq0$.
\par
(2) For any $\varepsilon>0,$ there exists a time $T>0$ and an integer $N\geq1$ such that
\begin{equation*}
\begin{split}
\|w(t+T)\|_{1}^{2}\leq \|w(s+T)\|_{1}^{2}\exp\left (-a(t-s)+C_{\ast}\varepsilon \int_{s+T}^{t+T}(\|u\|_{2}^{2}+\|v\|_{2}^{2}+\|\psi u\|_{1}^{2}+\|\psi v\|_{1}^{2})dr\right)
\end{split}
\end{equation*}
for any $t\geq s\geq0,$ where $C_{\ast}>0$ is a constant depending on $a$.

\end{theorem}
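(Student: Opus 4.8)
The plan is to derive a differential inequality for $\|w\|_1^2$ by subtracting the two equations and pairing with a suitable multiplier. Write the equation for $w = u - v$: subtracting, we get
$$w_t + aw + w_{xxxx} + uu_x - vv_x - P_N(uu_x - vv_x) = 0,$$
i.e. $w_t + aw + w_{xxxx} + (I - P_N)(uu_x - vv_x) = 0$. Since $uu_x - vv_x = \tfrac12\partial_x(u^2 - v^2) = \tfrac12\partial_x((u+v)w)$, this is the KSE-type Foia\c{s}--Prodi setup. For part (1) I would apply It\^o's formula (or just differentiate, since the noise cancels in $w$) to $\|w\|_1^2 = \|w\|^2 + \|w_x\|^2$. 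The parabolic term $(w_{xxxx}, w)_1$ produces a coercive contribution (modulo lower-order terms absorbed into the $-a$ and into the constant $C$), and the damping gives $-a\|w\|_1^2$. The nonlinear term is estimated by
$$|((I-P_N)(uu_x - vv_x), w)_1| \le C\|(u+v)w\|_{1}\,\|w\|_1 \le C(\|u\|_1 + \|v\|_1)\|w\|_1^2$$
using the algebra/interpolation structure of $H^1(\mathbb{R})$ together with the boundedness of $I - P_N$ on the relevant spaces; then Gronwall yields the stated exponential bound. Since we do not have a spectral gap on $\mathbb{R}$, the factor $\int_s^t(\|u\|_1^2 + \|v\|_1^2)\,dr$ cannot be controlled directly, which is why part (1) is only a preliminary estimate and part (2) is the one that matters.

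For part (2), the key is to split the nonlinear term into a part supported near the origin (controlled by $P_N$) and a part far from the origin (controlled by the weight $\psi$). Fix $\varepsilon > 0$. First choose the time $T$ large enough that, for $t \ge T$, the weight $\psi(t,x)$ is comparable to $\varphi(x)$ on the region $\{|x| \le R\}$ for a large $R$; this uses property (1) of $\psi$ and that $\psi(t,x) \to \varphi(x)$. On the far region $\{|x| > R\}$, since $\varphi(x) \ge \varphi(R)$ is large, we can write $\|(u+v)w\|$-type quantities there as $\le \varphi(R)^{-1}\|\psi^{1/2}(\cdots)\|$, gaining the small factor $\varepsilon$ once $R$ (hence $\varphi(R)$) is chosen large. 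On the near region $\{|x| \le R\}$, the smooth low-dimensional structure lets us absorb $\|w\|$ there into $\|P_N w\| + (\text{small})$; but $P_N$ acts trivially on the difference of \emph{nonlinearities} through the auxiliary $v$-equation — more precisely, the $(I - P_N)$ projection in front of $uu_x - vv_x$ means that the low modes of the nonlinear discrepancy are already killed, so the remaining contribution on $\{|x|\le R\}$ is handled by choosing $N$ large enough that the $N$-th eigenvalue-type cutoff makes the high-frequency remainder small, again of order $\varepsilon$. Collecting these, the nonlinear term is bounded by
$$C_\ast \varepsilon\,(\|u\|_2^2 + \|v\|_2^2 + \|\psi u\|_1^2 + \|\psi v\|_1^2)\|w\|_1^2 + (\text{coercive terms absorbed on the LHS}),$$
and Gronwall on $[s+T, t+T]$ gives the claim.

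The main obstacle will be the interplay between the two cutoffs — the spatial cutoff at radius $R$ needed to exploit the weight $\psi$, and the Fourier cutoff at mode $N$ coming from $P_N$ — since $P_N$ is a projection in $L^2(\mathbb{R})$ and does not commute with multiplication by a spatial cutoff. Concretely, estimating $(I-P_N)$ applied to a function that is localized in $\{|x| \le R\}$ requires an $H^3$-type smoothing bound (this is where the $\|u\|_2$, $\|\psi u\|_1$ norms on the right-hand side come from, after integrating by parts the fourth-order term and using the $\|e_i\|_3$ summability in assumption \textbf{(A)}), and one must check that the error from the non-commutativity, typically involving commutators $[P_N, \chi_R]$ with $\chi_R$ a smooth spatial cutoff, can also be made $O(\varepsilon)$ by first fixing $R$ and \emph{then} sending $N \to \infty$. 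Once the order of quantifiers ($\varepsilon$, then $T$ and $R$, then $N$) is pinned down correctly, the rest is a routine energy estimate plus Gronwall.
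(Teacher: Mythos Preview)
Your strategy is the same as the paper's: derive the energy identity for $\|w\|_1^2$ by pairing with $w-w_{xx}$, split the nonlinear term with a spatial cutoff, use $Q_N$-smallness on the near piece and the growth of $\psi$ on the far piece, then apply Gronwall. A few details are misplaced, however, and are worth correcting.

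First, the role of $T$ is reversed from what you describe. The paper does not need $\psi\approx\varphi$ on the near region; rather, it uses property~(3) of $\psi$ (that $\psi(t,x)\to\infty$ as $t,|x|\to\infty$) to get $\psi^{-1}(t,x)\le\varepsilon$ for all $t\ge T$ and $|x|\ge A/2$, with $T$ and $A$ chosen jointly. On the far piece one then inserts $\psi\cdot\psi^{-1}$ (not $\psi^{1/2}$, and not $\varphi^{-1}$): writing $I_2=(Q_Nw,(1-\chi_A)\psi^{-1}\cdot(\psi uu_x-\psi vv_x))$ gives $|I_2|\le\varepsilon\|w\|\,\|\psi uu_x-\psi vv_x\|$, and the $\|\psi u\|_1,\|\psi v\|_1$ terms fall out after one product estimate.

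Second, no commutator analysis $[P_N,\chi_R]$ is needed. The cutoff is placed \emph{inside} the projection: the decomposition is $Q_N(uu_x-vv_x)=Q_N\bigl(\chi_A(uu_x-vv_x)\bigr)+Q_N\bigl((1-\chi_A)(uu_x-vv_x)\bigr)$. For the near piece one then applies the lemma (from \cite{N2}) that for any $s>0$ and any fixed $A$, there exists $N$ with $\|Q_N(\chi_A f)\|\le\varepsilon\|f\|_s$ for all $f\in H^s$; taking $s=1$ and $f=uu_x-vv_x$ yields $|I_1|\le\varepsilon\|w\|\,\|uu_x-vv_x\|_1\le\varepsilon\|w\|\,\|w\|_2\|u+v\|_2$, which is where the $\|u\|_2^2+\|v\|_2^2$ factor comes from. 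The order of quantifiers is: fix $\varepsilon$, choose $T,A$ for the far piece, then choose $N=N(A,\varepsilon)$ for the near piece.

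Finally, the summability $\sum b_i^2\|e_i\|_3^2<\infty$ from assumption \textbf{(A)} plays no role in this Foia\c{s}--Prodi estimate; it enters only in the irreducibility argument later.
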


\begin{proof}[Proof of Theorem \ref{FP}]
We can see that $w$ satisfies that
\begin{equation}\label{WE}
\begin{split}
w_{t}+aw+w_{xxxx}+Q_{N}(uu_{x}-vv_{x})=0,
\end{split}
\end{equation}
where $Q_{N}:=Id-P_{N}$. Taking the $L^{2}-$inner product of \eqref{WE} with $w,$ we get
\begin{equation*}
\begin{split}
\frac{1}{2}\partial_{t}\|w\|^{2}+a\|w\|^{2}+\|w_{xx}\|^{2}+(Q_{N}(uu_{x}-vv_{x}),w)=0.
\end{split}
\end{equation*}
Taking the $L^{2}-$inner product of \eqref{WE} with $-w_{xx},$ we get
\begin{equation*}
\begin{split}
\frac{1}{2}\partial_{t}\|w_{x}\|^{2}+a\|w_{x}\|^{2}+\|w_{xxx}\|^{2}+(Q_{N}(uu_{x}-vv_{x}),-w_{xx})=0,
\end{split}
\end{equation*}
the above facts imply that
\begin{equation*}
\begin{split}
\frac{1}{2}\partial_{t}\|w\|_{1}^{2}+a\|w\|_{1}^{2}+(\|w_{xxx}\|^{2}+\|w_{xx}\|^{2})+(Q_{N}(uu_{x}-vv_{x}),w-w_{xx})=0.
\end{split}
\end{equation*}
\par
(1) By using the Cauchy inequality, we have
\begin{equation*}
\begin{split}
-(Q_{N}(uu_{x}-vv_{x}),w-w_{xx})\leq C(\|u\|_{1}+\|v\|_{1})\|w\|_{1}\|w\|_{2}\leq \frac{a_{0}}{4}\|w\|_{2}^{2}+C(\|u\|_{1}^{2}+\|v\|_{1}^{2})\|w\|_{1}^{2},
\end{split}
\end{equation*}
thus, Gronwall inequality implies (1).

\par
(2) Let us first recall the following lemma
\begin{lemma}\cite[Lemma 2.1]{N2}\label{L1}
Let us fix any $s>0.$ For any $\varepsilon,A>0,$ there is an integer $N\geq1$ such that
\begin{equation*}
\begin{split}
\|Q_{N}\chi_{A}f\|\leq \varepsilon\|f\|_{s},~~\forall f\in H^{s},
\end{split}
\end{equation*}
where $Q_{N}:=Id-P_{N}$ and $\chi_{A}$ is any smooth cut-off function from $\mathbb{R}$ to $[0,1]$
such that
\begin{equation*}
\begin{array}{l}
\chi_{A}=\left\{
\begin{array}{lll}
1\\
0
\end{array}
\right.
\end{array}
\begin{array}{lll}
x\in [-\frac{A}{2},\frac{A}{2}],
\\x\in [-A,A]^{c}.
\end{array}
\end{equation*}
\end{lemma}
We split $(Q_{N}(uu_{x}-vv_{x}),w)$ into two parts,
\begin{equation*}
\begin{split}
(Q_{N}(uu_{x}-vv_{x}),w)&=(Q_{N}\chi_{A}(uu_{x}-vv_{x}),w)+(Q_{N}(1-\chi_{A})(uu_{x}-vv_{x}),w)\\
&=:I_{1}+I_{2}.
\end{split}
\end{equation*}
Then, applying Lemma \ref{L1} with $s=1$ to $I_{1}$, we have
\begin{equation*}
\begin{split}
I_{1}&=(Q_{N}\chi_{A}(uu_{x}-vv_{x}),w)\\
&\leq \|w\|\|Q_{N}\chi_{A}(uu_{x}-vv_{x})\|\\
&\leq \varepsilon\|w\|\|uu_{x}-vv_{x}\|_{1}\\
&\leq \varepsilon\|w\|\|u^{2}-v^{2}\|_{2}\\
&\leq \varepsilon\|w\|\|w\|_{2}\|u+v\|_{2}\\
&\leq \varepsilon C_{\delta}\|w\|^{2}(\|u\|_{2}^{2}+\|v\|_{2}^{2})+\delta\|w\|_{2}^{2}.
\end{split}
\end{equation*}
For $I_{2}$, it follows from the property of the weight function $\psi$ that
\begin{equation*}
\begin{split}
\psi^{-1}(x,t)\leq \varepsilon,~~\forall t\geq T, |x|\geq\frac{A}{2},
\end{split}
\end{equation*}
provided that $T,A>0$ are large enough. This implies that
\begin{equation*}
\begin{split}
I_{2}&=((1-\chi_{A})(uu_{x}-vv_{x}),Q_{N}w)\\
&=(Q_{N}w,(1-\chi_{A})(\psi uu_{x}-\psi vv_{x})\psi^{-1})\\
&\leq \varepsilon\|w\|\|\psi uu_{x}-\psi vv_{x}\|.
\end{split}
\end{equation*}
Since
\begin{equation*}
\begin{split}
\|\psi uu_{x}-\psi vv_{x}\|
&=\|w\psi u_{x}+\psi vw_{x}\|\\
&\leq \|w\|_{1}\|\psi u_{x}\|+\|\psi v\|_{1}\|w\|_{1}\\
&= \|w\|_{1}(\|(\psi u)_{x}-\psi_{x} u\|+\|\psi v\|_{1})\\
&\leq C\|w\|_{1}(\|\psi u\|_{1}+\|\psi v\|_{1}+\|u\|),
\end{split}
\end{equation*}
we have
\begin{equation*}
\begin{split}
I_{2}&\leq \varepsilon C\|w\|\|w\|_{1}(\|\psi u\|_{1}+\|\psi v\|_{1}+\|u\|)\\
&\leq \varepsilon C_{\delta}\|w\|^{2}(\|\psi u\|_{1}^{2}+\|\psi v\|_{1}^{2}+\|u\|^{2})+\delta\|w\|_{1}^{2},
\end{split}
\end{equation*}
by taking $\delta>0$ small enough, it holds that
\begin{equation*}
\begin{split}
\partial_{t}\|w\|^{2}+[a-\varepsilon C(\|u\|_{2}^{2}+\|v\|_{2}^{2}+\|\psi u\|_{1}^{2}+\|\psi v\|_{1}^{2})]\|w\|^{2}\leq0.
\end{split}
\end{equation*}
\par
On the other hand,
\begin{equation*}
\begin{split}
(Q_{N}(uu_{x}-vv_{x}),w_{xx})&=(Q_{N}\chi_{A}(uu_{x}-vv_{x}),w_{xx})+(Q_{N}(1-\chi_{A})(uu_{x}-vv_{x}),w_{xx})\\
&=:I_{3}+I_{4}.
\end{split}
\end{equation*}
Then, applying Lemma \ref{L1} with $s=1$ to $I_{3}$,
\begin{equation*}
\begin{split}
I_{3}&=(Q_{N}\chi_{A}(uu_{x}-vv_{x}),w_{xx})\\
&\leq \|w_{xx}\|\|Q_{N}\chi_{A}(uu_{x}-vv_{x})\|\\
&\leq \varepsilon\|w_{xx}\|\|uu_{x}-vv_{x}\|_{1}\\
&\leq \varepsilon\|w_{xx}\|\|u^{2}-v^{2}\|_{2}\\
&\leq \varepsilon\|w_{xx}\|\|w\|_{2}\|u+v\|_{2}\\
&\leq \varepsilon\|w\|_{2}^{2}\|u+v\|_{2}\\
&\leq \varepsilon\|w\|_{1}\|w\|_{3}\|u+v\|_{2}\\
&\leq \varepsilon C_{\delta}\|w\|_{1}^{2}(\|u\|_{2}^{2}+\|v\|_{2}^{2})+\delta\|w\|_{3}^{2}\\
\end{split}
\end{equation*}
and
\begin{equation*}
\begin{split}
I_{4}&=((1-\chi_{A})(uu_{x}-vv_{x}),Q_{N}w_{xx})\\
&=(Q_{N}w_{xx},(1-\chi_{A})(\psi uu_{x}-\psi vv_{x})\psi^{-1})\\
&\leq \varepsilon\|w_{xx}\|\|\psi uu_{x}-\psi vv_{x}\|\\
&\leq \varepsilon C\|w_{xx}\|\|w\|_{1}(\|\psi u\|_{1}+\|\psi v\|_{1}+\|u\|)\\
&\leq \varepsilon C_{\delta}\|w\|_{1}^{2}(\|\psi u\|_{1}^{2}+\|\psi v\|_{1}^{2}+\|u\|^{2})+\delta\|w_{xx}\|^{2}.
\end{split}
\end{equation*}
By taking $\delta>0$ small enough, we have
\begin{equation*}
\begin{split}
\partial_{t}\|w\|_{1}^{2}+[a-\varepsilon C(\|u\|_{2}^{2}+\|v\|_{2}^{2}+\|\psi u\|_{1}^{2}+\|\psi v\|_{1}^{2})]\|w\|_{1}^{2}\leq0,
\end{split}
\end{equation*}
thus, Gronwall inequality implies (2).
\par
This completes the proof.
\end{proof}

\subsection{Growth estimate for the auxiliary process}
\begin{proposition}\label{pro10}
If \eqref{41} holds for some $N\geq 1$, then there exist constants $\kappa,C>0$ such that
\begin{equation*}
\begin{split}
\mathbb{P}(\tau^{v}<\infty)
\leq CQ_{q}(\rho+1)+\frac{1}{2}\left(\exp(Ce^{C(\rho+\|u\|^{2p_{1}}+\|u^{\prime}\|^{2p_{1}}+1)}d^{2})-1 \right)^{\frac{1}{2}}
\end{split}
\end{equation*}
for any $\rho>0$ and $u,u^{\prime}\in V$ with $d=\|u-u^{\prime}\|_{1}.$
\end{proposition}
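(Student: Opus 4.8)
The plan is to compare the auxiliary process $v$ (the solution of \eqref{F1}, issued from $u^{\prime}$) with the genuine solution of \eqref{KSE} issued from the same point $u^{\prime}$; the two differ only by the extra drift $\xi(t):=P_{N}(uu_{x}-vv_{x})(t)=\tfrac12P_{N}\partial_{x}\big(w(u+v)\big)$ with $w:=u-v$, which is \emph{small} whenever $d=\|u-u^{\prime}\|_{1}$ is small, by the Foia\c{s}-Prodi estimate of Theorem \ref{FP}, and the cost of the comparison will be quantified by a change of measure. Throughout, write $u^{\prime}(\cdot)$ for the solution of \eqref{KSE} from $u^{\prime}$, with trajectory law $\mathcal{D}(u^{\prime})$, and recall that $\{\tau^{\bullet}<\infty\}$ is a measurable functional of the trajectory. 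Using Proposition \ref{proT1} with $l=L=0$ (and $K\ge K_{3}$, $M\ge 1+M_{3}$), both $\mathbb{P}(\tau^{u}<\infty)$ and $\mathbb{P}(\tau^{u^{\prime}}<\infty)$ are $\le CQ_{q}(\rho+1)$, so it remains to estimate $\mathbb{P}(\tau^{v}<\infty,\tau^{u}=\infty)$; on $\{\tau^{u}=\infty\}$ the bound defining $\tau^{u}$ forces $\int_{0}^{t}(\|u\|_{2}^{2}+\|\psi u\|_{1}^{2})\,ds$ to grow at most linearly in $t$, and on $[0,\tau^{v})$ the same holds for $\int_{0}^{t}(\|v\|_{2}^{2}+\|\psi v\|_{1}^{2})\,ds$, with slopes equal to a fixed constant plus a multiple of $\rho+\|u\|^{2p_{1}}+\|u^{\prime}\|^{2p_{1}}+1$.

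\emph{Girsanov reduction.} Rewriting \eqref{F1} as $v_{t}+av+v_{xxxx}+vv_{x}=h+\eta-\xi$, and noting that $\xi(t)\in\mathrm{span}\{e_{1},\dots,e_{N}\}$ where the noise is non-degenerate by \eqref{41} (so $\xi=\sum_{i\le N}b_{i}\phi_{i}e_{i}$ with $|\phi_{i}|\le C|(\xi,e_{i})|$), I would introduce, after localizing at $\tau^{u}\wedge\tau^{v}$ so that the integrals below are finite, the probability $\mathbb{Q}\ll\mathbb{P}$ whose density is the stochastic exponential of $-\sum_{i}\int\phi_{i}\,d\beta_{i}$; under $\mathbb{Q}$ the process $v$ solves the genuine equation \eqref{KSE} from $u^{\prime}$, hence $\mathbb{Q}(\tau^{v}<\infty)=\mathbb{P}(\tau^{u^{\prime}}<\infty)\le CQ_{q}(\rho+1)$. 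Then, using the elementary bound $\|\mu-\nu\|_{\mathrm{var}}\le\tfrac12(\mathbb{E}_{\nu}[(d\mu/d\nu)^{2}]-1)^{1/2}$ together with the standard estimate for squares of stochastic exponentials, $\mathbb{E}_{\mathbb{Q}}[(d\mathbb{P}/d\mathbb{Q})^{2}]\le\big\|\exp\big(C\int_{0}^{\infty}\|\xi(s)\|^{2}\,ds\big)\big\|_{L^{\infty}}$ on $\{\tau^{u}=\infty\}$ (valid once the relevant exponential supermartingale is a true martingale), one gets
\begin{equation*}
\mathbb{P}(\tau^{v}<\infty,\tau^{u}=\infty)\le CQ_{q}(\rho+1)+\tfrac12\Big(\big\|\exp\big(C\!\int_{0}^{\infty}\|\xi(s)\|^{2}\,ds\big)\big\|_{L^{\infty}}-1\Big)^{1/2}.
\end{equation*}

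\emph{Smallness of $\xi$.} On $\{\tau^{u}=\infty\}$, up to $\tau^{v}$, the integral $\int_{0}^{t}(\|u\|_{2}^{2}+\|v\|_{2}^{2}+\|\psi u\|_{1}^{2}+\|\psi v\|_{1}^{2})\,ds$ is linear in $t$ with a known slope; choosing $\varepsilon$ in Theorem \ref{FP}(2) small enough that $C_{\ast}\varepsilon$ times this slope stays below $a/2$ — which at the same time fixes $N$ and the time $T$ there — and combining Theorem \ref{FP}(1) on $[0,T]$ with Theorem \ref{FP}(2) afterwards gives $\|w(t)\|_{1}^{2}\le C\exp\!\big(C(\rho+\|u\|^{2p_{1}}+\|u^{\prime}\|^{2p_{1}}+1)\big)d^{2}e^{-at/2}$ for all $t\ge0$. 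Since $\|\xi(t)\|^{2}\le C\|w(t)\|_{1}^{2}(\|u(t)\|^{2}+\|v(t)\|^{2})$ and $\int_{0}^{t}(\|u\|^{2}+\|v\|^{2})\,ds$ grows linearly, integrating the exponentially decaying factor $e^{-at/2}$ yields $\int_{0}^{\infty}\|\xi\|^{2}\,ds\le C\exp\!\big(C(\rho+\|u\|^{2p_{1}}+\|u^{\prime}\|^{2p_{1}}+1)\big)d^{2}$, hence $\mathbb{E}_{\mathbb{Q}}[(d\mathbb{P}/d\mathbb{Q})^{2}]\le\exp\!\big(Ce^{C(\rho+\|u\|^{2p_{1}}+\|u^{\prime}\|^{2p_{1}}+1)}d^{2}\big)$. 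Combining this with the two displays above gives the asserted inequality.

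The step I expect to be the main obstacle is this last matching: since the only control available on the weighted energies is polynomial (Propositions \ref{proM}, \ref{proM2}), one must make the Foia\c{s}-Prodi contraction rate $a$ beat the merely linear-in-$t$ growth of $\int(\|u\|_{2}^{2}+\|v\|_{2}^{2}+\cdots)$, which forces $\varepsilon$ — and therefore $N$ and $T$ — to be fixed before the constants entering those energy bounds; one must also be careful that the exponential supermartingale feeding the Girsanov density is genuinely a martingale, which is why all the estimates must first be carried out on $[0,\tau^{u}\wedge\tau^{v}]$ and only then passed to the limit.
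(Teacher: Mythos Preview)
Your approach is the paper's: Girsanov plus Foia\c{s}--Prodi. The part on the smallness of $\xi$ --- choosing $\varepsilon$ in Theorem \ref{FP}(2) so that $C_{\ast}\varepsilon$ times the linear slope of the weighted energies stays below $a/2$, splitting $[0,T]$ versus $t>T$, and then integrating the exponential decay $e^{-at/2}$ against the linearly growing $\int(\|u\|_{1}^{2}+\|v\|_{1}^{2})$ --- is exactly the paper's Steps~3--4.

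The one place where you are looser than the paper is the Girsanov bookkeeping. After localising the drift at $\tau^{u}\wedge\tau^{v}$ you assert ``under $\mathbb{Q}$ the process $v$ solves the genuine equation \eqref{KSE}, hence $\mathbb{Q}(\tau^{v}<\infty)=\mathbb{P}(\tau^{u'}<\infty)$''. With the localised shift, $v$ under $\mathbb{Q}$ coincides with a genuine solution of \eqref{KSE} only on $[0,\tau^{u}\wedge\tau^{v}]$, so the laws are not globally identified; what you actually get is $\mathbb{Q}(\tau^{v}<\infty,\,\tau^{u}=\infty)\le\mathbb{P}(\tau^{u'}<\infty)$, after observing that on $\{\tau^{u}=\infty\}$ the localisation time equals $\tau^{v}$ and hence $v$ agrees up to $\tau^{v}$ with the genuine solution driven by the $\mathbb{Q}$-Brownian motion. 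This suffices for your decomposition, but the equality as written is not correct.

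The paper avoids this subtlety by introducing \emph{truncated} processes $\hat{u},\hat{u}',\hat{v}$ (equal to $u,u',v$ up to $\tau:=\tau^{u}\wedge\tau^{u'}\wedge\tau^{v}$, afterwards solving the free linear equation $z_{t}+az+z_{xxxx}=0$). The shift $\mathbb{I}_{s\le\tau}P_{N}[\hat{u}\hat{u}_{x}-\hat{v}\hat{v}_{x}]$ is then globally defined, the Wiener-space map $\Phi^{u,u'}$ sends $\hat{v}$ exactly to $\hat{u}'$, and one obtains $\mathbb{P}(\tau^{\hat{v}}<\infty)\le\mathbb{P}(\tau^{\hat{u}'}<\infty)+\|\mathbb{P}-\Phi^{u,u'}_{\ast}\mathbb{P}\|_{\mathrm{var}}$. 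The price is that one must bound $\mathbb{P}(\tau^{\hat{u}'}<\infty)$ for the \emph{truncated} process, which is precisely why Proposition~\ref{proT2} (and the auxiliary Proposition~\ref{proM2}) are in the paper; your route bypasses those and uses only Proposition~\ref{proT1}. Both approaches close once the identification step is stated carefully.
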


\begin{proof}[Proof of Proposition \ref{pro10}]
The proof is divided into the following steps.
\par
\textbf{Step 1}. Let $\tau=\tau^{v}\wedge\tau^{u}\wedge\tau^{u^{\prime}}.$ We introduce the truncated processes $\{\hat{u}(t)\}_{t\geq0},$ $\{\hat{u}^{\prime}(t)\}_{t\geq0}$
and $\{\hat{v}(t)\}_{t\geq0}$ as follows:
\begin{equation*}
\begin{split}
\hat{u}(t)=
\left\{
\begin{array}{lll}
u(t),~{\rm{for}}~0\leq t\leq \tau,\\
{\rm{solves}}~ z_{t}+Az=0,~{\rm{for}}~t\geq \tau,
\end{array}
\right.
\\
\hat{u}^{\prime}(t)=
\left\{
\begin{array}{lll}
u^{\prime}(t),~{\rm{for}}~0\leq t\leq \tau,\\
{\rm{solves}}~ z_{t}+Az=0,~{\rm{for}}~t\geq \tau,
\end{array}
\right.
\\
\hat{v}(t)=
\left\{
\begin{array}{lll}
v(t),~{\rm{for}}~0\leq t\leq \tau,\\
{\rm{solves}}~ z_{t}+Az=0,~{\rm{for}}~t\geq \tau.
\end{array}
\right.
\end{split}
\end{equation*}
According to Proposition \ref{proT1} with $l=0$, we have
\begin{equation*}
\begin{split}
\mathbb{P}(\tau^{v}<\infty)\leq CQ_{q}(\rho+1)+\mathbb{P}(\tau^{\hat{v}}<\infty).
\end{split}
\end{equation*}
\par
\textbf{Step 2}. Without loss of generality, we assume that the underlying
probability space $(\Omega,\mathcal{F},\mathbb{P})$ is of a particular form $\Omega:=C_{0}([0,+\infty);H)$ is the space of all continuous functions taking values in $H$ and vanishing at $t=0,$ $\mathbb{P}$
is the distribution of the Wiener process
$$\xi(t)=\sum_{i=1}^{\infty}b_{i}\beta_{i}(t)e_{i},$$
$\mathcal{F}$ is the completion of the Borel $\sigma$-algebra of associated with the topology
of uniform convergence on every compact set. For any integer $N\geq1,$ we define the transform
\begin{equation*}
\begin{split}
\Phi^{u,u^{\prime}}:&~\Omega\longrightarrow \Omega\\
&\omega(t)\rightarrow \omega(t)-\int_{0}^{t}\mathbb{I}_{s\leq \tau}P_{N}[\hat{u}\hat{u}_{x}-\hat{v}\hat{v}_{x}]ds.
\end{split}
\end{equation*}
Due to the pathwise uniqueness for the stochastic hyperviscous Burgers equation, we have
\begin{equation*}
\begin{split}
\mathbb{P}\{\hat{u}^{\prime}(\Phi^{u,u^{\prime}}(\omega),t)=\hat{v}(\omega,t),~\forall t\geq0\}=1,
\end{split}
\end{equation*}
this leads to
\begin{equation*}
\begin{split}
\mathbb{P}(\tau^{\hat{v}}<\infty)=\Phi^{u,u^{\prime}}_{\ast}\mathbb{P}(\tau^{\hat{u}^{\prime}}<\infty)\leq \mathbb{P}(\tau^{\hat{u}^{\prime}}<\infty)+\|\mathbb{P}-\Phi^{u,u^{\prime}}_{\ast}\mathbb{P}\|_{var}.
\end{split}
\end{equation*}
An estimate for $\mathbb{P}(\tau^{\hat{u}^{\prime}}<\infty)$ is proved in Proposition \ref{proT2}.
\par
We express $\Omega=C([0,+\infty);P_{N}H)\oplus C([0,+\infty);Q_{N}H),$ for $\omega=(\omega^{(1)},\omega^{(2)})\in \Omega,$
\begin{equation*}
\begin{split}
\Phi^{u,u^{\prime}}(\omega)=\Phi^{u,u^{\prime}}(\omega^{(1)},\omega^{(2)})=(\Psi^{u,u^{\prime}}(\omega^{(1)},\omega^{(2)}),\omega^{(2)}),
\end{split}
\end{equation*}
where
\begin{equation*}
\begin{split}
\Psi^{u,u^{\prime}}:&~\Omega\longrightarrow C([0,+\infty);P_{N}H)\\
&\omega(t)\rightarrow \omega^{(1)}(t)+\int_{0}^{t}A(s,\omega^{(1)},\omega^{(2)})ds
\end{split}
\end{equation*}
with $$A(s):=-\mathbb{I}_{s\leq \tau}P_{N}[\hat{u}\hat{u}_{x}-\hat{v}\hat{v}_{x}].$$ By applying \cite[Lemma 3.3.13]{KS12}, we have
\begin{equation*}
\begin{split}
\|\mathbb{P}-\Phi^{u,u^{\prime}}_{\ast}\mathbb{P}\|_{var}\leq \int_{C([0,+\infty);Q_{N}H)}\|\mathbb{P}_{N}-\Psi^{u,u^{\prime}}_{\ast}(\mathbb{P}_{N},\omega^{(2)})\|_{var}\mathbb{P}_{N}^{\bot}(d\omega^{(2)}).
\end{split}
\end{equation*}
If for any $C>0$ and $\omega^{(2)},$ it holds that
\begin{equation}\label{10}
\begin{split}
\mathbb{E}_{N}\exp(C\int_{0}^{\infty}\|A(t)\|^{2}dt)<\infty,
\end{split}
\end{equation}
then Girsanov theorem shows that
\begin{equation*}
\begin{split}
\|\mathbb{P}_{N}-\Psi^{u,u^{\prime}}_{\ast}(\mathbb{P}_{N},\omega^{(2)})\|_{var}
\leq \frac{1}{2}[(\mathbb{E}_{N}\exp(6\sup_{1\leq i\leq N}b_{i}^{-2}\int_{0}^{\infty}\|A(t)\|^{2}dt))^{\frac{1}{2}}-1]^{\frac{1}{2}}.
\end{split}
\end{equation*}
\par
\textbf{Step 3}. Let us estimate $w(t):=\hat{u}(t)-\hat{v}(t)$ for $0\leq t\leq\tau$, for any $t\in [0,\tau),$ it holds that
\begin{equation}\label{11}
\begin{split}
\|w(t)\|_{1}^{2}\leq C\exp\left (-\frac{a}{2}t+C(\rho+\|u\|^{2p_{1}}+\|u^{\prime}\|^{2p_{1}}+1)\right)d^{2}.
\end{split}
\end{equation}
\par
Indeed, noting the fact $w(t)=u(t)-v(t)$ for $0\leq t\leq\tau$ and the definition of $\tau$,
we have
\begin{equation}\label{9}
\begin{split}
&\mathcal{E}_{\hat{u}}^{\psi}(t)=\mathcal{E}_{u}^{\psi}(t)\leq (K+L)t+\rho+M(\|u\|^{2}+\|u\|^{2p_{1}}+1),\\
&\mathcal{E}_{\hat{v}}^{\psi}(t)=\mathcal{E}_{v}^{\psi}(t)\leq (K+L)t+\rho+M(\|u^{\prime}\|^{2}+\|u^{\prime}\|^{2p_{1}}+1),
\end{split}
\end{equation}
for $0\leq t\leq\tau$. Now, we distinguish the following two cases.
\par
\textit{Case 1. $\tau\leq T.$}
\par
Foia\c{s}-Prodi estimate in Theorem \ref{FP} shows that
\begin{equation*}
\begin{split}
\|w(t)\|_{1}^{2}&\leq \|w(0)\|_{1}^{2}\exp\left (-at+C\int_{0}^{t}(\|u\|_{1}^{2}+\|v\|_{1}^{2})dr\right)\\
&\leq \|w(0)\|_{1}^{2}\exp\left(-at+C((K+L)T+\rho+M(\|u\|^{2}+\|u\|^{2p_{1}}+\|u^{\prime}\|^{2}+\|u^{\prime}\|^{2p_{1}}+2))\right),
\end{split}
\end{equation*}
this implies that
\begin{equation}\label{8}
\begin{split}
\|w(t)\|_{1}^{2}\leq C\exp\left(-at+C(\rho+\|u\|^{2p_{1}}+\|u^{\prime}\|^{2p_{1}}+1)\right)d^{2}
\end{split}
\end{equation}
\par
\textit{Case 2. $\tau> T.$}
\par
We can see that \eqref{8} holds for $w$ on $[0,T]$. We apply Foia\c{s}-Prodi estimate in Theorem \ref{FP} with $\varepsilon=\frac{aa_{0}}{4C_{\ast}(K+L)}$ to $w$ on $[T,\tau)$,
it holds that
\begin{equation*}
\begin{split}
\|w(t)\|_{1}^{2}\leq \|w(T)\|_{1}^{2}\exp\left (-a(t-T)+C_{\ast}\varepsilon \int_{T}^{t}(\|u\|_{2}^{2}+\|v\|_{2}^{2}+\|\psi u\|_{1}^{2}+\|\psi v\|_{1}^{2})dr\right).
\end{split}
\end{equation*}
\eqref{9} leads to
\begin{equation*}
\begin{split}
\|w(t)\|_{1}^{2}\leq C\exp\left (-\frac{a}{2}t+C(\rho+\|u\|^{2p_{1}}+\|u^{\prime}\|^{2p_{1}}+1)\right)d^{2}.
\end{split}
\end{equation*}
Case 1 and Case 2 imply \eqref{11}.
\par
\textbf{Step 4}. Let us verify the Novikov condition \eqref{10}.
\par
Indeed, since $\|A(t)\|\leq C\cdot 1_{t\leq \tau}\cdot\|w\|_{1}(\|u\|_{1}+\|v\|_{1})$, by integrating by parts and \eqref{11}, we have
\begin{equation*}
\begin{split}
\int_{0}^{\infty}\|A(t)\|^{2}dt
&=\int_{0}^{\tau}\|A(t)\|^{2}dt\\
&\leq C\int_{0}^{\tau}\|w\|^{2}_{1}(\|u\|^{2}_{1}+\|v\|^{2}_{1})dt\\
&\leq C\exp\left (C(\rho+\|u\|^{2p_{1}}+\|u^{\prime}\|^{2p_{1}}+1)\right) d^{2}\int_{0}^{\tau}\exp\left (-\frac{a}{2}t \right)(\|u\|^{2}_{1}+\|v\|^{2}_{1})dt\\
&=C\exp\left (C(\rho+\|u\|^{2p_{1}}+\|u^{\prime}\|^{2p_{1}}+1)\right) d^{2}\int_{0}^{\tau}\exp\left (-\frac{a}{2}t\right)\textit{d}(\int_{0}^{t}(\|u\|^{2}_{1}+\|v\|^{2}_{1})ds)\\
&\leq C\exp\left (C(\rho+\|u\|^{2p_{1}}+\|u^{\prime}\|^{2p_{1}}+1)\right) d^{2},
\end{split}
\end{equation*}
namely, we prove Novikov condition \eqref{10}. Then, we can obtain
\begin{equation}\label{13}
\begin{split}
\|\mathbb{P}-\Phi^{u,u^{\prime}}_{\ast}\mathbb{P}\|_{var}\leq \frac{1}{2}\left(\exp(Ce^{C(\rho+\|u\|^{2p_{1}}+\|u^{\prime}\|^{2p_{1}}+1)}d^{2})-1 \right)^{\frac{1}{2}}.
\end{split}
\end{equation}
\par
This completes the proof.
\end{proof}

\subsection{Weighted estimates for the truncated process}
\begin{proposition}\label{proM2}
For any $q>1$, there exists a $Q_{q}$ and positive constants $K_{4}$ and $M_{4}$ such that
\begin{equation*}
\begin{split}
\mathbb{P}\left(\sup\limits_{t\geq 0}[\mathcal{E}^{\psi}_{\hat{u}^{\prime}}(t)-K_{4}t-M_{4}(\|u^{\prime}\|^{2}+\|u^{\prime}\|^{2p_{1}}+1)]\geq \rho\right)
\leq CQ_{q}(\rho+1),
\end{split}
\end{equation*}
for all $\rho>0,$ and $u^{\prime}\in H,$ where $C$ is a constant depends on $u^{\prime},q,h,B_{1},B_{2}$.
\end{proposition}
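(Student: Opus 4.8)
\textbf{Proof proposal for Proposition \ref{proM2}.}

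The plan is to repeat the weighted-energy analysis carried out for the original process $u$ in Propositions \ref{pro1}--\ref{proM}, but now for the truncated process $\hat{u}^{\prime}$, and then exploit the key structural fact that the extra term $-P_{N}(\hat{u}^{\prime}\hat{u}^{\prime}_{x})$ appearing in the equation for the truncated process is a projection onto the finite-dimensional span of $\{e_{1},\dots,e_{N}\}$, hence smoothing and harmless for every estimate. First I would write the It\^o formulas for $\|\hat{u}^{\prime}\|^{2}$, $\|\hat{u}^{\prime}\|^{2p_{1}}$, and $\|\psi\hat{u}^{\prime}\|^{2}$ exactly as in the proofs of Propositions \ref{pro5}, \ref{pro1}, \ref{pro2}, \ref{pro3}. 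The additional contribution from $P_{N}(\hat{u}^{\prime}\hat{u}^{\prime}_{x})$ produces terms of the form $(\hat{u}^{\prime},P_{N}(\hat{u}^{\prime}\hat{u}^{\prime}_{x}))$ and $(\psi^{2}\hat{u}^{\prime},P_{N}(\hat{u}^{\prime}\hat{u}^{\prime}_{x}))$; since $P_{N}$ is an orthogonal projection onto a fixed finite-dimensional space of smooth functions, one has $\|P_{N}(\hat{u}^{\prime}\hat{u}^{\prime}_{x})\|_{s}\leq C_{N,s}\|\hat{u}^{\prime}\hat{u}^{\prime}_{x}\|_{-1}\leq C_{N}\|\hat{u}^{\prime}\|^{2}$ for any $s$, so these terms are dominated by $\delta\|\psi\hat{u}^{\prime}\|^{2}+C_{\delta,N}(\|\hat{u}^{\prime}\|^{2}+\|\hat{u}^{\prime}\|^{4})$ and absorbed into the dissipation plus the lower-order moments already controlled. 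The crucial point is that the dissipative structure $a\|\hat{u}^{\prime}\|^{2}+\|\hat{u}^{\prime}_{xx}\|^{2}$ and the martingale term are unchanged by the presence of $P_{N}$.

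Next I would reproduce the moment bounds: the analogue of Proposition \ref{pro5}, giving $\mathbb{E}\|\hat{u}^{\prime}(t)\|^{2p}\leq e^{-pat}\|u^{\prime}\|^{2p}+C(p,h,B_{1},N)$ and $\mathbb{E}\int_{0}^{t}\|\hat{u}^{\prime}\|^{2p}ds\leq C(u^{\prime},p,h,B_{1},N)(t+1)$, holds because the extra term is energy-neutral up to the absorbable estimate above. With these in hand, the martingale $M(t)$, $M^{p}(t)$, $M_{2}(t)$ all have quadratic variations controlled exactly as before, the exponential supermartingale inequality applies to $\mathcal{E}_{\hat{u}^{\prime}}$ and the Burkholder--Davis--Gundy plus Chebyshev/maximal-martingale argument applies to $\mathcal{E}^{p_{1}}_{\hat{u}^{\prime}}$ and $\mathcal{F}^{\psi}_{\hat{u}^{\prime}}$, yielding the polynomial tail $CQ_{q}(\rho+1)$ in each case. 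Combining these via the inequality $\|\psi\hat{u}^{\prime}\|_{2}^{2}\leq C(\|\psi\hat{u}^{\prime}\|^{2}+\|\hat{u}^{\prime}\|^{2}+\|\hat{u}^{\prime}_{x}\|^{2}+\|\psi\hat{u}^{\prime}_{xx}\|^{2})$, exactly as in Proposition \ref{proM}, produces constants $K_{4}$ and $M_{4}$ for which the stated bound holds; note that here $T=0$ since the truncated process is issued from $u^{\prime}$ at time $0$, which is why the right side involves $\|u^{\prime}\|$ rather than $\|u^{\prime}(T)\|$.

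The main obstacle I anticipate is the bookkeeping in bounding the new terms $(\psi^{2}\hat{u}^{\prime},P_{N}(\hat{u}^{\prime}\hat{u}^{\prime}_{x}))$ uniformly: one must check that the weight $\psi(x)=\ln(x^{2}+2)$ interacts harmlessly with $P_{N}$, i.e.\ that $\|\psi P_{N}g\|\leq C_{N}\|g\|_{-1}$ or similar, using $B_{2}<\infty$ and the assumption $\sum_{i}b_{i}^{2}\|\varphi e_{i}\|^{2}<\infty$ together with decay/regularity of the fixed basis functions $e_{1},\dots,e_{N}$; this is the only place where the unboundedness of the domain could, in principle, cause trouble. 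Once that weighted bound on $P_{N}$ is secured, every remaining step is a routine transcription of the proofs of Propositions \ref{pro1}--\ref{proM} with $u$ replaced by $\hat{u}^{\prime}$ and $T$ set to $0$.
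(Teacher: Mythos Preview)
Your proposal rests on a misreading of what $\hat{u}^{\prime}$ is. You treat it as a solution of a modified SPDE carrying an extra projection term $-P_{N}(\hat{u}^{\prime}\hat{u}^{\prime}_{x})$, and propose to rerun the It\^o-formula machinery of Propositions \ref{pro1}--\ref{proM} on that equation. But $\hat{u}^{\prime}$ is not such a process: it is the \emph{truncation} of the original KSE solution $u^{\prime}$ at the stopping time $\tau$, defined piecewise as $\hat{u}^{\prime}(t)=u^{\prime}(t)$ for $t\le\tau$ and, for $t\ge\tau$, as the solution of the deterministic linear equation $z_{t}+az+z_{xxxx}=0$ issued from $u^{\prime}(\tau)$ (see the definition in the proof of Proposition \ref{pro10}). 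There is no $P_{N}$ term anywhere in the dynamics of $\hat{u}^{\prime}$, so the estimates you plan to carry out---and the ``main obstacle'' you identify concerning $(\psi^{2}\hat{u}^{\prime},P_{N}(\hat{u}^{\prime}\hat{u}^{\prime}_{x}))$---are simply not relevant.

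The paper's argument exploits the piecewise structure directly and avoids redoing any stochastic analysis. On $[0,\tau]$ one has $\mathcal{E}^{\psi}_{\hat{u}^{\prime}}=\mathcal{E}^{\psi}_{u^{\prime}}$, so the already-proved Proposition \ref{proM} applies verbatim. For $t\ge\tau$ the process follows the linear dissipative flow, and an elementary deterministic weighted estimate gives $\mathcal{E}^{\psi}_{\hat{u}^{\prime}}(t+\tau)\le C\,\mathcal{E}^{\psi}_{u^{\prime}}(\tau)$. Combining the two regimes yields, for a suitable $C^{\prime}>1$,
\[
\sup_{t\ge0}\bigl[\mathcal{E}^{\psi}_{\hat{u}^{\prime}}(t)-C^{\prime}K_{3}t\bigr]\le C^{\prime}\sup_{t\ge0}\bigl[\mathcal{E}^{\psi}_{u^{\prime}}(t)-K_{3}t\bigr],
\]
and then one invokes Proposition \ref{proM} for $u^{\prime}$ (with $T=0$) to get the polynomial tail. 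Setting $K_{4}=C^{\prime}K_{3}$ and $M_{4}=C^{\prime}(M_{3}+1)$ finishes the proof. The moral is that the truncation is designed precisely so that nothing new needs to be proved: one reduces to the estimate for the genuine solution $u^{\prime}$ and controls the post-$\tau$ phase by linear decay.
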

\begin{proof}[Proof of Proposition \ref{proM2}]
Let us first consider the auxiliary equation
\begin{equation*}
\begin{split}
z_{t}+az+z_{xxxx}=0 ~~{\rm{in}}~\mathbb{R}.
\end{split}
\end{equation*}
\par
On the one hand, by the similar arguments as in \eqref{4}, for any $t,T\geq0,$ we have
\begin{equation*}
\begin{split}
\|z(t+T)\|^{2p_{1}}+p_{1}a\int_{T}^{t+T}\|z(s)\|^{2p_{1}}ds\leq \|z(T)\|^{2p_{1}}.
\end{split}
\end{equation*}
It follows from the definition of $\mathcal{E}_{z}^{p}$ that
\begin{equation}\label{17}
\begin{split}
\mathcal{E}_{z}^{p_{1}}(t+T)\leq C\mathcal{E}_{z}^{p_{1}}(T)~~{\rm{for}}~t,T\geq0.
\end{split}
\end{equation}
\par
On the other hand, for any $t,T\geq0,$ we have
\begin{equation*}
\begin{split}
\|z(t+T)\|^{2}+a_{0}\int_{T}^{t+T}\|z(s)\|_{2}^{2}ds\leq \|z(T)\|^{2}.
\end{split}
\end{equation*}
By the similar arguments as in Proposition \ref{pro3}, it holds that
\begin{equation*}
\begin{split}
\|(\psi z)(t+T)\|^{2}+a_{0}\int_{T}^{t+T}(\|(\psi z)(s)\|^{2}+\|(\psi z_{xx})(s)\|^{2})ds\leq C(\|(\psi z)(T)\|^{2}+\|z(T)\|^{2}).
\end{split}
\end{equation*}
Since $\|\psi z\|_{2}^{2}\leq C(\|\psi z\|^{2}+\|z\|^{2}+\|z_{x}\|^{2}+\|\psi z_{xx}\|^{2})$, we have
\begin{equation*}
\begin{split}
\|z(t+T)\|^{2}+\|(\psi z)(t+T)\|^{2}+a_{0}\int_{T}^{t+T}(\|z(s)\|_{2}^{2}+\|(\psi z)(s)\|_{2}^{2})ds\leq C(\|z(T)\|^{2}+\|(\psi z)(T)\|^{2}),
\end{split}
\end{equation*}
this implies that
\begin{equation}\label{18}
\begin{split}
\mathcal{G}_{z}^{\psi}(t+T)\leq C\mathcal{G}_{z}^{\psi}(T)~~{\rm{for}}~t,T\geq0.
\end{split}
\end{equation}
With the help of $\mathcal{E}^{\psi}_{u}(t)=\mathcal{E}^{p_{1}}_{u}(t)+\mathcal{G}^{\psi}_{u}(t)$, \eqref{17} and \eqref{18}, we have
\begin{equation*}
\begin{split}
\mathcal{E}_{z}^{\psi}(t+T)\leq C\mathcal{E}_{z}^{\psi}(T)~~{\rm{for}}~t,T\geq0.
\end{split}
\end{equation*}
This implies that on the event $\{\tau<\infty\},$ it holds that
\begin{equation*}
\begin{split}
\mathcal{E}_{\hat{u}^{\prime}}^{\psi}(t+\tau)\leq C\mathcal{E}_{u^{\prime}}^{\psi}(\tau)~~{\rm{for}}~t\geq0,
\end{split}
\end{equation*}
then, on the event $\{\tau<\infty\}$, there exists a constant $C^{\prime}>1$ such that
\begin{equation*}
\begin{split}
\mathcal{E}_{\hat{u}^{\prime}}^{\psi}(t)-C^{\prime}\mathcal{K}t\leq C^{\prime}\sup\limits_{t\geq0}(\mathcal{E}_{u^{\prime}}^{\psi}(t)-\mathcal{K}t)~~{\rm{for}}~t\geq0.
\end{split}
\end{equation*}
On the event $\{\tau=+\infty\}$, it is easy to see that
\begin{equation*}
\begin{split}
\mathcal{E}_{\hat{u}^{\prime}}^{\psi}(t)-C^{\prime}\mathcal{K}t\leq \sup\limits_{t\geq0}(\mathcal{E}_{u^{\prime}}^{\psi}(t)-\mathcal{K}t)~~{\rm{for}}~t\geq0,
\end{split}
\end{equation*}
this implies that
\begin{equation*}
\begin{split}
&\mathcal{E}_{\hat{u}^{\prime}}^{\psi}(t)-C^{\prime}\mathcal{K}t-C^{\prime}(\mathcal{M}+1)(\|u^{\prime}\|^{2}+\|u^{\prime}\|^{2p_{1}}+1))\\
\leq&\mathcal{E}_{\hat{u}^{\prime}}^{\psi}(t)-C^{\prime}\mathcal{K}t-C^{\prime}(\|u^{\prime}\|^{2}+\|u^{\prime}\|^{2p_{1}}+\mathcal{M}(\|u^{\prime}\|^{2}+\|u^{\prime}\|^{2p_{1}}+1))\\
\leq &C^{\prime}\sup\limits_{t\geq0}(\mathcal{E}_{u^{\prime}}^{\psi}(t)-\mathcal{K}t-\mathcal{E}_{u^{\prime}}^{\psi}(0)-\mathcal{M}(\|u^{\prime}\|^{2}+\|u^{\prime}\|^{2p_{1}}+1))~~{\rm{for}}~t\geq0.
\end{split}
\end{equation*}
By taking $K_{4}=C^{\prime}\mathcal{K}, M_{4}=C^{\prime}(\mathcal{M}+1),$ with the help of Proposition \ref{proM+}, we can prove Proposition \ref{proM2}.
\par
This completes the proof.
\end{proof}

\begin{proposition}\label{proT2}
For any $q>1$, there exists a $Q_{q}$ such that if $K\geq K_{4}$ and $M\geq M_{4}$
\begin{equation}\label{15}
\begin{split}
\mathbb{P}(l\leq\tau^{\hat{u}^{\prime}}<+\infty)\leq CQ_{q}(\rho+Ll+1),
\end{split}
\end{equation}
for all $L,l\geq0,\rho>0,$ and $u\in H,$ where $C$ is a constants depends on $u,q,h,B_{1},B_{2}$.
\end{proposition}
\begin{proof}[Proof of Proposition \ref{proT2}]
On the event $\{l\leq\tau^{\hat{u}^{\prime}}<\infty\},$ the definition of $\tau^{\hat{u}^{\prime}}$ implies that
\begin{equation*}
\begin{split}
\mathcal{E}^{\psi}_{\hat{u}^{\prime}}(\tau^{\hat{u}^{\prime}})
&\geq (K+L)\tau^{\hat{u}^{\prime}}+\rho+M(\|u^{\prime}\|^{2}+\|u^{\prime}\|^{2p_{1}}+1)\\
&\geq K_{4}\tau^{\hat{u}^{\prime}}+Ll+M_{4}(\|u^{\prime}\|^{2}+\|u^{\prime}\|^{2p_{1}}+1)+\rho\\
&\geq K_{4}\tau^{\hat{u}^{\prime}}+M_{4}(\|u^{\prime}\|^{2}+\|u^{\prime}\|^{2p_{1}}+1)+Ll+\rho,
\end{split}
\end{equation*}
thus, we have
\begin{equation*}
\begin{split}
\sup\limits_{t\geq 0}[\mathcal{E}^{\psi}_{\hat{u}^{\prime}}(t)-K_{4}t-M_{4}(\|u^{\prime}\|^{2}+\|u^{\prime}\|^{2p_{1}}+1)]\geq Ll+\rho,
\end{split}
\end{equation*}
with the help of Proposition \ref{proM2}, we have \eqref{15}.
\par
This completes the proof.
\end{proof}

\section{Proof of Theorem \ref{MT1}}
With the help of Theorem \ref{THACPM}, in order to prove Theorem \ref{MT1}, we only need to prove recurrence and
polynomial mixing squeezing.
\subsection{Recurrence}
\begin{proposition}\label{proI}
For any $R,d>0,$ there exist constants $p,T>0$ and an integer $N\geq1$ such that
\begin{equation}
\begin{split}
\mathbb{P}(u(T)\in B_{V}(0,d))\geq p
\end{split}
\end{equation}
holds for any $u_{0}\in B_{V}(0,R)$, provided that \eqref{41} holds.
\end{proposition}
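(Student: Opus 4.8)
The plan is to prove this ``irreducibility to a small ball'' statement by a two-step control argument combined with the moment estimates already at our disposal. First I would compare the solution $u$ of \eqref{KSE} with the solution $z$ of the deterministic linear problem $z_t+az+z_{xxxx}=0$ with $z(0)=u_0$, which decays: $\|z(t)\|_1^2\leq e^{-a_0 t}\|u_0\|_1^2$ after the usual energy estimates (using that the symbol $|\xi|^4+a$ of the linear part is bounded below by $a_0(1+|\xi|^2)$ up to a harmless constant, or iterating the $H^0$ and $H^2$ bounds). Hence there is a time $T=T(R,d)$ so that $\|z(T)\|_1\leq d/2$ uniformly over $\|u_0\|_1\leq R$.

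Next I would write $r:=u-z$, which solves a KSE-type equation with zero initial data, forcing $h$, the same white noise $\eta$, and the nonlinear term $uu_x=(r+z)(r+z)_x$. The goal is to show $\mathbb{P}(\sup_{0\le t\le T}\|r(t)\|_1\le d/2)\ge p>0$ for a suitable $p=p(R,d,T)$. I would do this by first isolating the stochastic convolution $\zeta(t):=\int_0^t e^{-(t-s)(a+\partial_x^4)}\,dW(s)$; under assumption $\textbf{(A)}$ (which controls $B_1$ and $B_3$, hence the $H^3$-regularity of the noise) one shows $\mathbb{E}\sup_{[0,T]}\|\zeta(t)\|_{1}^{2}<\infty$ (in fact in a higher norm), so by Chebyshev the event $\mathcal{A}_\varepsilon=\{\sup_{[0,T]}\|\zeta\|_1\le\varepsilon\}$ has probability bounded below; more carefully, since $\zeta$ is a Gaussian process with nondegenerate small-ball behaviour, $\mathbb{P}(\mathcal{A}_\varepsilon)>0$ for every $\varepsilon>0$. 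On $\mathcal{A}_\varepsilon$ I would estimate $\rho:=r-\zeta$, which solves a deterministic-in-form equation (random only through $z$ and $\zeta$ as known coefficients) with zero data; a Gronwall argument on $\tfrac{d}{dt}\|\rho\|_1^2$, using the dissipation $a\|\rho\|_1^2+\|\rho\|_3^2+\|\rho\|_2^2$ to absorb the cubic-type nonlinear interactions of $\rho$ with the already-small quantities $z$ (small after time $T$, bounded before) and $\zeta$ (small on $\mathcal{A}_\varepsilon$), together with the bound on $h$ from $\textbf{(A)}$, yields $\sup_{[0,T]}\|\rho(t)\|_1\le C(R,T)(\varepsilon+\text{(contribution of }h))$ — which is the delicate point, since $h$ need not be small. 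To handle $h$ I would instead center the comparison at the solution $z_h$ of $z_{h,t}+az_h+z_{h,xxxx}+z_h z_{h,x}=h$, $z_h(0)=u_0$, a deterministic equation whose solution at time $T$ can be driven into $B_V(0,d/2)$ by choosing $T$ large (here I use that the deterministic forced KSE is dissipative: its energy estimates give $\limsup_{t\to\infty}\|z_h(t)\|_1\le C(\|h\|, a)$, and then one needs $\|h\|$ itself to be small — or, more robustly, absorb $h$ into the definition of the target ball radius via the fact that the invariant-measure support argument only needs recurrence to \emph{some} bounded set, but the Proposition as stated demands arbitrary $d$, so one truly needs the noise to be able to push the solution near the deterministic attractor point). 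Then $\mathbb{P}(\|u(T)-z_h(T)\|_1\le d/2)$ is estimated exactly as above with $z$ replaced by $z_h$, and on $\mathcal{A}_\varepsilon$ with $\varepsilon$ small one concludes $\|u(T)\|_1\le d$.

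I expect the main obstacle to be the nonlinear term $uu_x$ on the whole line: in the $H^1$ energy estimate for the difference $\rho$ one produces terms like $\int \rho\,\partial_x(\rho^2)$, $\int \rho\,\partial_x(\rho z)$ and $\int \rho\,\partial_x(\rho\zeta)$ (and the corresponding $-\rho_{xx}$-tested versions), and the ones genuinely quadratic/cubic in $\rho$ must be controlled by the $\|\rho\|_2^2$ and $\|\rho\|_3^2$ dissipation via Gagliardo--Nirenberg interpolation on $\mathbb{R}$ (no Poincaré inequality available), while the mixed terms must be shown to be small using the smallness of $z$ (after time $T$) and $\zeta$ (on $\mathcal{A}_\varepsilon$); this is precisely the place where, as the paper's methodology section notes, one cannot use a sign condition as for $u^3$ and must work harder. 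The remaining steps — the linear decay estimate, the small-ball lower bound for the Gaussian process $\zeta$ under $\textbf{(A)}$, and the final Chebyshev/union bound — are routine given the moment estimates of Proposition \ref{pro5} and the regularity built into assumption $\textbf{(A)}$.
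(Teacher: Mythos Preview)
Your proposal correctly identifies the overall structure and correctly locates the real difficulty --- the deterministic forcing $h$ --- but the treatment of $h$ has a genuine gap that you yourself half-acknowledge. Comparing with $z_h$, the solution of the deterministic forced KSE, cannot reach $B_V(0,d/2)$ for arbitrarily small $d$: the long-time limit set of $z_h$ is generically bounded away from the origin, as you note. And on your event $\mathcal{A}_\varepsilon$ where the stochastic convolution $\zeta$ is small, the difference $u-z_h$ is small, but that only places $u(T)$ near the attractor of the \emph{forced} deterministic problem, not near $0$. So neither branch of your dichotomy closes, and in particular your argument never uses the nondegeneracy hypothesis \eqref{41} or explains where the integer $N$ in the statement comes from.

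The missing idea, which is what the paper does, is to use the noise to \emph{cancel} $h$ rather than to be small by itself. One sets $y_1(t):=th+W(t)$ and works on the event $\Gamma_\delta=\{\sup_{[0,T]}\|y_1(t)\|_3\le\delta\}$; on $\Gamma_\delta$ the total forcing $h+\eta$ has a uniformly small time-integral in a strong norm, and one can then compare $y_2:=u-y_1$ with the solution $y_3$ of the \emph{unforced} deterministic KSE (same initial data), which genuinely decays to zero in $H^1$, yielding $\|u(T)\|_1\le d$ for $T$ large and $\delta$ small. The point where \eqref{41} enters is the lower bound $\mathbb{P}(\Gamma_\delta)>0$: choose $N$ so large that $\|Q_N h\|_3<\delta/(3T)$ (using the summability condition on $h$ in assumption \textbf{(A)}), and then $\Gamma_\delta$ contains the intersection of two independent events --- the nondegenerate finite-dimensional Gaussian $P_N W$ tracking $-tP_N h$ to within $\delta/3$, and the tail $Q_N W$ staying within $\delta/3$ --- each of which has positive probability. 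Without this ``noise cancels $h$'' mechanism the proof does not close for arbitrary $d$.
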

\begin{proof}[Proof of Proposition \ref{proI}]
Let us take $T,\delta>0$ and denote $y_{1}(t):=th+W(t)$ and
\begin{equation*}
\begin{split}
\Gamma_{\delta}:=\left\{\sup\limits_{t\in [0,T]}\|y_{1}(t)\|_{3}\leq \delta \right\}.
\end{split}
\end{equation*}
Let $y_{2}(t):=u(t)-y_{1}(t).$ The proof is divided into the following steps.
\par
\textit{Step 1.} For any $T>0$, there exists sufficiently small $0<\delta<1$ such that $\|y_{2}(t)\|$
is uniformly bounded on $[0,T]\times \Gamma_{\delta}.$ We can see that $y_{2}$ satisfies that
\begin{equation*}
\begin{array}{l}
\left\{
\begin{array}{lll}
\partial_{t}y_{2}+ay_{2}+y_{2xxxx}=-ay_{1}-y_{1xxxx}-uu_{x}\\
y_{2}(x,0)=u_{0}
\end{array}
\right.
\end{array}
\begin{array}{lll}
\textrm{in}~\mathbb{R},
\\\textrm{in}~\mathbb{R},
\end{array}
\end{equation*}
Indeed, multiplying the above equation by $y_{2}-y_{2xx}$ and then performing integration by parts over $\mathbb{R}$, we get
\begin{equation*}
\begin{split}
&\frac{1}{2}\partial_{t}(\|y_{2}\|^{2}+\|y_{2x}\|^{2})+a(\|y_{2}\|^{2}+\|y_{2x}\|^{2})+(\|y_{2xx}\|^{2}+\|y_{2xxx}\|^{2})\\
=&-a(y_{1},y_{2}-y_{2xx})-(y_{1xxxx},y_{2}-y_{2xx})-(uu_{x},y_{2}-y_{2xx}).
\end{split}
\end{equation*}
On the event $\Gamma_{\delta},$ we have
\begin{equation*}
\begin{split}
&-a(y_{1},y_{2}-y_{2xx})\leq C\|y_{1}\|^{2}+\varepsilon \|y_{2}\|_{2}^{2}\leq C\delta^{2}+\varepsilon \|y_{2}\|_{2}^{2},\\
&-(y_{1xxxx},y_{2})=-(y_{1xx},y_{2xx})\leq C\|y_{1}\|_{2}^{2}+\varepsilon \|y_{2}\|_{2}^{2}\leq C\delta^{2}+\varepsilon \|y_{2}\|_{2}^{2},\\
&-(y_{1xxxx},-y_{2xx})=(y_{1xxx},y_{2xxx})\leq C\|y_{1xxx}\|^{2}+\varepsilon \|y_{2}\|_{3}^{2}\leq C\delta^{2}+\varepsilon \|y_{2}\|_{3}^{2},\\
&-(uu_{x},y_{2})=-((y_{1}+y_{2})(y_{1}+y_{2})_{x},y_{2})=-(y_{1}y_{1x}+y_{1}y_{2x}+y_{1x}y_{2},y_{2})\\
&~~~~~~~~~~~~~~~=-(y_{1}y_{1x},y_{2})+\frac{1}{2}(y_{1x},y_{2}^{2})\leq C\delta(1+\|y_{2}\|^{2}),\\
&-(uu_{x},-y_{2xx})=((y_{1}+y_{2})(y_{1}+y_{2})_{x},y_{2xx})=(y_{1}y_{1x}+y_{1}y_{2x}+y_{1x}y_{2}+y_{2}y_{2x},y_{2xx}),\\
&(y_{1}y_{1x}+y_{1}y_{2x}+y_{1x}y_{2},y_{2xx})\leq C\delta^{2}\|y_{2xx}\|+C\delta \|y_{2x}\|^{2}+C\delta \|y_{2}\|_{2}^{2}
\leq C\delta(1+ \|y_{2}\|_{2}^{2}).
\end{split}
\end{equation*}
Since
\begin{equation*}
\begin{split}
(y_{2}y_{2x},y_{2xx})=-\frac{1}{2}\int_{\mathbb{R}}y_{2x}^{3}dx\leq \varepsilon\|y_{2}\|_{3}^{2}+C\|y_{2}\|^{\frac{22}{5}}
\end{split}
\end{equation*}
By taking $0<\varepsilon<<1,$ we have
\begin{equation*}
\begin{split}
\partial_{t}\|y_{2}\|^{2}\leq C\delta(1+\|y_{2}\|^{2}),
\end{split}
\end{equation*}
this implies that if we $0<\delta<<1,$ we have
\begin{equation*}
\begin{split}
\sup\limits_{t\in [0,T]}\|y_{2}\|^{2}\leq 2(1+R^{2}).
\end{split}
\end{equation*}
By taking $0<\varepsilon<<1$ and $0<\delta<<1,$ we have
\begin{equation*}
\begin{split}
\partial_{t}(\|y_{2}\|^{2}+\|y_{2x}\|^{2})+a(\|y_{2}\|^{2}+\|y_{2x}\|^{2})+(\|y_{2xx}\|^{2}+\|y_{2xxx}\|^{2})\leq C(R,T),
\end{split}
\end{equation*}
this implies that
\begin{equation*}
\begin{split}
\sup\limits_{t\in [0,T]}\|y_{2}\|_{1}^{2}+\int_{0}^{T}\|y_{2}\|_{3}^{2}dt\leq C(R,T).
\end{split}
\end{equation*}
Moreover, on the event $\Gamma_{\delta},$ we have
\begin{equation*}
\begin{split}
\sup\limits_{t\in [0,T]}\|u\|_{1}^{2}+\int_{0}^{T}\|u\|_{3}^{2}dt\leq C(R,T).
\end{split}
\end{equation*}
\par
\textit{Step 2.} Now we prove that there exist sufficiently large $T>0$ and small $\delta>0$
such that $\|u(T)\|_{1}<d$ on the event $\Gamma_{\delta}$.

\par
Indeed, let $y_{3}$ be the solution of unforced hyperviscous Burgers equation
\begin{equation*}
\begin{array}{l}
\left\{
\begin{array}{lll}
\partial_{t}y_{3}+ay_{3}+y_{3xxxx}+y_{3}y_{3x}=0\\
y_{3}(x,0)=u_{0}
\end{array}
\right.
\end{array}
\begin{array}{lll}
\textrm{in}~\mathbb{R},
\\\textrm{in}~\mathbb{R},
\end{array}
\end{equation*}
by the similar argument as in Proposition \ref{pro4}, we have
\begin{equation}\label{16}
\begin{split}
\|y_{3}(t)\|\leq e^{-at}\|u_{0}\|.
\end{split}
\end{equation}
Since
\begin{equation*}
\begin{split}
(y_{3}y_{3x},y_{3xx})\leq \frac{1}{2}\|y_{3xxx}\|^{2}+C\|y_{3}\|^{2}+C\|y_{3}\|^{\frac{22}{5}},
\end{split}
\end{equation*}
we have
\begin{equation*}
\begin{split}
\partial_{t}\|y_{3x}\|^{2}+(2a\|y_{3x}\|^{2}+\|y_{3xxx}\|^{2})\leq C\|y_{3}\|^{2}+C\|y_{3}\|^{\frac{22}{5}}\leq C(u_{0})e^{-2at}.
\end{split}
\end{equation*}
Combining this with \eqref{16}, we can prove that
\begin{equation*}
\begin{split}
\|y_{3}(t)\|_{1}^{2}\leq C(u_{0})e^{-2at}(1+t),
\end{split}
\end{equation*}
we can choose a large $T=T(R,d)$ such that $\|y_{3}(T)\|_{1}\leq \frac{d}{4}.$
\par
Now we estimate the difference $y(t):=y_{2}(t)-y_{3}(t),$ then $y$ is the solution of the equation
\begin{equation}
\begin{array}{l}
\left\{
\begin{array}{lll}
\partial_{t}y+ay+y_{xxxx}=-ay_{1}-y_{1xxxx}-(uu_{x}-y_{3}y_{3x})\\
y(x,0)=0
\end{array}
\right.
\end{array}
\begin{array}{lll}
\textrm{in}~\mathbb{R},
\\\textrm{in}~\mathbb{R}.
\end{array}
\end{equation}
On the event $\Gamma_{\delta},$ we have
\begin{equation*}
\begin{split}
&-a(y_{1},y)\leq C\|y_{1}\|^{2}+\varepsilon \|y\|^{2}\leq C\delta^{2}+\varepsilon \|y\|^{2},\\
&-(y_{1xxxx},y)=-(y_{1xx},y_{xx})\leq C\|y_{1xx}\|^{2}+\varepsilon \|y\|_{2}^{2}\leq C\delta^{2}+\varepsilon \|y\|_{2}^{2},\\
&-(uu_{x}-y_{3}y_{3x},y)=-((y_{1}+y)u_{x}+y_{3}(y_{1}+y)_{x},y)\\
&~~~~~~\leq C\|y_{1}\|_{1}\|u\|_{1}\|y\|+C\|u\|_{2}\|y\|^{2}+C\|y_{1}\|_{1}^{2}+C(\|y_{3}\|_{1}^{2}+\|y_{3}\|_{2})\|y\|^{2}.
\end{split}
\end{equation*}
By taking $0<\varepsilon<<1$, these estimates imply that
\begin{equation*}
\begin{split}
\partial_{t}\|y\|^{2}\leq C(\delta^{2}+(\|u\|_{1}^{2}+\|u\|_{2}+\|y_{3}\|_{1}^{2}+\|y_{3}\|_{2})\|y\|^{2}),
\end{split}
\end{equation*}
then we have
\begin{equation*}
\begin{split}
\|y(T)\|^{2}\leq C\delta^{2}.
\end{split}
\end{equation*}
On the event $\Gamma_{\delta},$ we have
\begin{equation*}
\begin{split}
&-a(y_{1},-y_{xx})\leq C\|y_{1}\|^{2}+\varepsilon \|y_{xx}\|^{2}\leq C\delta^{2}+\varepsilon \|y_{xx}\|^{2},\\
&-(y_{1xxxx},-y_{xx})=-(y_{1xxx},y_{xxx})\leq C\|y_{1xxx}\|^{2}+\varepsilon \|y_{xxx}\|^{2}\leq C\delta^{2}+\varepsilon \|y_{xxx}\|^{2},\\
&-(uu_{x}-y_{3}y_{3x},-y_{xx})=-((y_{1}+y)u_{x}+y_{3}(y_{1}+y)_{x},y_{xx})\\
&-(y_{1}u_{x},y_{xx})\leq \delta\|u\|_{1}\|y_{xx}\|\leq C\delta^{2}\|u\|_{1}^{2}+\varepsilon \|y_{xx}\|^{2},\\
&-(u_{x}y,y_{xx})\leq \|u\|_{1}\|y\|_{1}\|y_{xx}\|\leq C_{\varepsilon}\|u\|_{1}^{2}\|y\|_{1}^{2}+\varepsilon \|y_{xx}\|^{2},\\
&-(y_{3}y_{1x},y_{xx})\leq \|y_{3}\|_{1}\|y_{1}\|_{1}\|y_{xx}\|\leq C_{\varepsilon}\delta^{2}\|y_{3}\|_{1}^{2}+\varepsilon \|y_{xx}\|^{2},\\
&-(y_{3}y_{x},y_{xx})\leq \|y_{3}\|_{1}\|y_{x}\|\|y_{xx}\|\leq C_{\varepsilon}\|y_{3}\|_{1}^{2}\|y_{x}\|^{2}+\varepsilon \|y_{xx}\|^{2}.
\end{split}
\end{equation*}
By taking $0<\varepsilon<<1$, these estimates imply that
\begin{equation*}
\begin{split}
\partial_{t}\|y\|_{1}^{2}\leq C(\delta^{2}+(\|u\|_{1}^{2}+\|u\|_{2}+\|y_{3}\|_{1}^{2}+\|y_{3}\|_{2})\|y\|_{1}^{2}),
\end{split}
\end{equation*}
then we have
\begin{equation*}
\begin{split}
\|y(T)\|_{1}^{2}\leq C\delta^{2}.
\end{split}
\end{equation*}
We choose $0<\delta<<\min\{1,\frac{d}{4\sqrt{C}}\}$, we have $\|y(T)\|_{1}\leq\frac{d}{4},$ and $\|u(T)\|_{1}\leq d$ on the event $\Gamma_{\delta}.$
\par
\textit{Step 3.} Since we have proven that
\begin{equation*}
\begin{split}
P_{T}(u_{0},B_{V}(0,d))\geq \mathbb{P}(\Gamma_{\delta})
\end{split}
\end{equation*}
for all $u_{0}\in B_{V}(0,R).$ Now, we prove $\mathbb{P}(\Gamma_{\delta})>0.$
Let us choose $N\geq1$ large enough so that
\begin{equation*}
\begin{split}
\|Q_{N}h\|_{3}\leq\sum\limits_{i=N+1}^{\infty}|(h,e_{i})|\|e_{i}\|_{3}<\frac{\delta}{3T}.
\end{split}
\end{equation*}
According to the independence of $Q_{N}W$ and $P_{N}W$, we have
\begin{equation*}
\begin{split}
\mathbb{P}(\Gamma_{\delta})
\geq \mathbb{P}(\sup\limits_{t\in [0,T]}\|P_{N}ht+P_{N}W(t)\|_{3}<\frac{\delta}{3})
\times\mathbb{P}(\sup\limits_{t\in [0,T]}\|Q_{N}W(t)\|_{3}<\frac{\delta}{3}).
\end{split}
\end{equation*}
Condition \eqref{41} implies that $\mathbb{P}(\sup\limits_{t\in [0,T]}\|P_{N}ht+P_{N}W(t)\|_{3}<\frac{\delta}{3})>0.$ It follows from $B_{3}<+\infty$ and the property of Gaussian measure that $\mathbb{P}(\sup\limits_{t\in [0,T]}\|Q_{N}W(t)\|_{3}<\frac{\delta}{3})>0.$ These show $\mathbb{P}(\Gamma_{\delta})>0.$
\par
This completes the proof.
\end{proof}
\par
\par
It follows from \cite[Proposition 3.3]{Shi08} that the above facts imply that the recurrence holds for the extension $(\mathbf{u}_{t},\mathbb{P}_{\mathbf{u}}).$ Now, we state that the extension $(\mathbf{u}_{t}, \mathbb{P}_{\mathbf{u}})$ of $(u_{t}, \mathbb{P}_{u})$
is irreducible.
\begin{proposition}\label{pro11}
For any $R,d>0,$ there exist constants $p,T>0$ and an integer $N\geq1$ such that
\begin{equation*}
\begin{split}
\mathbb{P}_{\mathbf{u}}(\mathbf{u}(T)\in B_{V}(0,d)\times B_{V}(0,d))\geq p
\end{split}
\end{equation*}
holds for any $\mathbf{u}\in B_{V}(0,R)\times B_{V}(0,R)$, provided that \eqref{41} holds.
\end{proposition}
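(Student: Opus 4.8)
The plan is to reduce the two-component statement to the one-component irreducibility already proved in Proposition \ref{proI}, exploiting the structure of the mixing extension constructed in Section 2.1. Recall that for $\mathbf{u}=(u,u')\in B_V(0,R)\times B_V(0,R)$ the extension on $[0,T]$ is realized by the maximal coupling $(\tilde v,\tilde u')$ of the laws $\lambda_T(u,u')$ and $\lambda_T'(u,u')$, where $\tilde u'$ has the law of the genuine solution $u'$ of \eqref{KSE} issued from $u'$, while $\tilde v$ has the law of the auxiliary process $v$ solving \eqref{F1} issued from $u'$ (equivalently the law of the Foia\c{s}--Prodi–corrected equation \eqref{29}). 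The first marginal $\tilde u_t$ has the law of the solution $u$ of \eqref{KSE} from $u$. So it suffices to show that each of the three relevant processes — the solution of \eqref{KSE} from $u$, the solution of \eqref{KSE} from $u'$, and the auxiliary process $v$ from $u'$ — lands in $B_V(0,d)$ at time $T$ with positive probability, with $T,N$ chosen uniformly over $B_V(0,R)$.

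First I would observe that the event $\Gamma_\delta=\{\sup_{t\in[0,T]}\|y_1(t)\|_3\le\delta\}$ used in the proof of Proposition \ref{proI} is a single event on the underlying Wiener path (it depends only on $W$, not on the initial datum), and the whole argument of Steps 1--2 there shows that on $\Gamma_\delta$ one has $\|u(T)\|_1\le d$ for \emph{every} $u_0\in B_V(0,R)$. Hence on $\Gamma_\delta$ simultaneously $\|\tilde u(T)\|_1\le d$ (solution from $u$) and the solution of \eqref{KSE} from $u'$ also lies in $B_V(0,d)$. The only new point is the auxiliary process $v$ from \eqref{F1}: here I would rerun the energy estimates of Steps 1--2 of Proposition \ref{proI} for the equation \eqref{F1}, i.e. with the extra term $P_N(uu_x-vv_x)$. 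Writing $v=y_1+\hat y_2$, the additional contribution in the $H^1$ energy identity is $(P_N(uu_x-vv_x),\hat y_2-\hat y_{2xx})$; since $P_N$ is a bounded operator on every $H^s$ (indeed smoothing, as $N$ is fixed) and $\|u\|_3,\|v\|_1$ are already controlled on $\Gamma_\delta$ by the $a\,priori$ bounds of Step 1, this term is absorbed exactly as the nonlinear terms were, at the cost of enlarging the constants $C(R,T)$; the comparison with the unforced KSE $y_3$ in Step 2 goes through verbatim with $uu_x$ replaced by $uu_x+P_N(uu_x-vv_x)$, again using that $P_N$ is bounded. Thus on $\Gamma_\delta$ one also gets $\|v(T)\|_1\le d$, uniformly over $u,u'\in B_V(0,R)$.

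Putting this together: on the event $\Gamma_\delta$ (which by Step 3 of Proposition \ref{proI} has $\mathbb P(\Gamma_\delta)>0$ once $N$ is large enough, using \eqref{41} and $B_3<\infty$), both marginals of any realization of the extension lie in $B_V(0,d)$ at time $T$; since $\Gamma_\delta$ is $\sigma(W|_{[0,T]})$-measurable and the extension is driven by this same noise (through the maximal coupling, whose two flows are each adapted transformations of $W$), the event $\{\mathbf u(T)\in B_V(0,d)\times B_V(0,d)\}$ contains $\Gamma_\delta$ up to null sets, whence $\mathbb P_{\mathbf u}(\mathbf u(T)\in B_V(0,d)\times B_V(0,d))\ge \mathbb P(\Gamma_\delta)=:p>0$. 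The main obstacle I anticipate is the bookkeeping in re-deriving the uniform-in-$R$ energy bounds for \eqref{F1} with the projected nonlinearity present — in particular making sure the $\|u\|_3$-control from Step 1 (for the \emph{other} trajectory feeding the $P_N(uu_x)$ term) is genuinely available on $\Gamma_\delta$ — but this is a routine modification of the already-executed estimates rather than a new idea.
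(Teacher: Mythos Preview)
Your plan has a genuine gap at the key step. The mixing extension $(\mathbf{u}_t,\mathbb{P}_{\mathbf{u}})$ built in Section~2.1 is \emph{not} the synchronous (same-noise) coupling; it is constructed from a \emph{maximal coupling} $(\tilde v,\tilde u')$ of the path laws $\lambda_T(u,u')$ and $\lambda_T'(u,u')$ on an abstract probability space $(\tilde\Omega,\tilde{\mathcal F},\tilde{\mathbb P})$. From $\tilde v$ one recovers a process $\tilde\eta$ (with the law of $\eta-P_N(uu_x)$, not of $\eta$) and then defines $\tilde u$ by \eqref{30} using this same $\tilde\eta$. So $\tilde u$ and $\tilde v$ share the driving process $\tilde\eta$, but $\tilde u'$ does not: it is simply the second marginal of the maximal coupling, and the noise one extracts from $\tilde u'$ via the KSE equation is in general different from any noise driving $\tilde u$. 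Your sentence ``the extension is driven by this same noise (through the maximal coupling, whose two flows are each adapted transformations of $W$)'' is therefore unjustified: there is no single Brownian motion on $\tilde\Omega$ to which both $\tilde u$ and $\tilde u'$ are adapted, and the event $\Gamma_\delta$ (which is $\sigma(W|_{[0,T]})$-measurable on the \emph{original} space) has no canonical meaning on $\tilde\Omega$ that simultaneously constrains both components. Consequently the inclusion $\{\mathbf u(T)\in B_V(0,d)^2\}\supset\Gamma_\delta$ does not make sense for this extension, and the argument breaks down.

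The paper circumvents exactly this difficulty by exploiting the specific structure of the maximal coupling. One introduces the ``success'' event $\mathcal N^c=\{\tilde v(t)=\tilde u'(t)\text{ for all }t\}$, which for a maximal coupling has probability $1-\|\lambda_T-\lambda_T'\|_{var}$, and an energy-bound event $E_\rho$. On $\mathcal N^c$ one has $\tilde u'=\tilde v$, and the Foia\c{s}--Prodi estimate (Theorem~\ref{FP}) then forces $\|\tilde u(kT)-\tilde v(kT)\|_1$ to be small on $E_\rho$ for $k$ large. Thus $G_{d/2}(kT)\cap E_\rho\cap\mathcal N^c\subset G_d(kT)\cap G_d'(kT)$, and the lower bound follows from the one-component irreducibility (fact~(1)) together with smallness of $\mathbb P_{\mathbf u}(E_\rho^c)$ and of $\mathbb P_{\mathbf u}(\mathcal N)$ (the latter via the Girsanov estimate \eqref{13}). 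In short, rather than forcing both trajectories with a common small noise event, one forces only $\tilde u$ into the small ball and then \emph{transfers} this to $\tilde u'$ through the coupling event $\mathcal N^c$ and the squeezing $\tilde u\approx\tilde v=\tilde u'$; this is the missing idea in your plan.
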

\begin{proof}[Proof of Proposition \ref{pro11}]
Without loss of generality, we can assume $R\geq d.$
We define the events
\begin{equation*}
\begin{split}
&G_{d}(T):=\{\|\tilde{u}(T)\|_{1}\leq d\},\\
&G_{d}^{\prime}(T):=\{\|\tilde{u}^{\prime}(T)\|_{1}\leq d\},\\
&E_{\rho}:=\{\mathcal{E}_{\tilde{u}}^{\psi}(t)\leq M(\|u\|^{2}+\|u\|^{2p_{1}}+1)+Kt+\rho\}\cap \{\mathcal{E}_{\tilde{u}^{\prime}}^{\psi}(t)\leq M(\|u^{\prime}\|^{2}+\|u^{\prime}\|^{2p_{1}}+1)+Kt+\rho\},\\
&\mathcal{N}:=\{\tilde{v}(t)\neq \tilde{u}^{\prime}(t)~{\rm{for~some~}}t\geq0\}.
\end{split}
\end{equation*}
By the similar arguments as in \cite{Shi08}, \cite{M2014} and \cite{N2}, we can prove the following facts:
\par
(1) For any $u\in B_{V}(0,R)$ and $k\geq1.$
\begin{equation*}
\begin{split}
\mathbb{P}_{\mathbf{u}}(G_{ \frac{d}{2}}(kT))\wedge\mathbb{P}_{\mathbf{u}}(G^{\prime}_{ \frac{d}{2}}(kT))
\geq p_{0}.
\end{split}
\end{equation*}
\par
(2) For any $r>0,$ there exists $k\geq 1$ such that
\begin{equation*}
\begin{split}
G_{\frac{d}{2}}(kT)E_{r}\mathcal{N}^{c}\subset G_{d}(kT)G_{d}^{\prime}(kT)
\end{split}
\end{equation*}
for any $\mathbf{u}\in B_{V}(0,R)\times B_{V}(0,R).$
\par
With the help of the above facts and by the similar arguments as in \cite{Shi08}, \cite{M2014} and \cite{N2}, we can complete the proof.
\end{proof}

\begin{definition}
Let $(u(t),\mathbb{P}_{u})$ be a Markov process in a separable space $X$, and $F:X\rightarrow[1,+\infty)$
be a continuous functional such that
\begin{equation*}
\begin{split}
\lim\limits_{\|u\|_{X}\rightarrow+\infty}F(u)=+\infty.
\end{split}
\end{equation*}
Then, $F$ is said to be a \textbf{Lyapunov function} for $(u(t),\mathbb{P}_{u})$, if there are positive constants $t_{*},R_{*},C_{*},$ and $q_{*}<1$
such that
\begin{equation*}
\begin{split}
&\mathbb{E}_{u}F(u(t_{*}))\leq q_{*}F(u),~~\forall \|u\|\geq R_{*},\\
&\mathbb{E}_{u}F(u(t))\leq C_{*},~~\forall \|u\|\leq R_{*},t\geq0.
\end{split}
\end{equation*}
\end{definition}

\begin{proposition}\label{pro4}
The functional $F(u):=1+\|u\|_{1}^{2}+\|u\|^{2p_{2}}$ is a Lyapunov function for $(u(t),\mathbb{P}_{u})$
corresponding to \eqref{KSE}, where $p_{2}=\frac{11}{5}$.
\end{proposition}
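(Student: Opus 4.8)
Proof proposal (Proposition \ref{pro4}).

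\noindent The plan is to verify the two defining inequalities of a Lyapunov function directly from the moment bounds of Section 2 together with a standard $H^1$-energy estimate for \eqref{KSE}. First I would record the $L^2$ moment control: Proposition \ref{pro5} already gives $\mathbb{E}\|u(t)\|^{2p}\le e^{-pat}\|u_0\|^{2p}+C(p,h,B_1)/(pa)$ for every $p\ge1$, so in particular $\mathbb{E}\|u(t)\|^{2p_2}$ decays exponentially fast towards a fixed constant. The remaining ingredient is a bound for $\mathbb{E}\|u(t)\|_1^2$. To get it, apply the It\^o formula to $\|u_x\|^2$ (equivalently, take the $L^2$-inner product of \eqref{KSE} with $-u_{xx}$ and add the It\^o correction $B_1'(:=\sum b_i^2\|e_{i}\|_1^2$, which is finite by $B_3<\infty$)). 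The linear part contributes $-2a\|u_x\|^2-2\|u_{xxx}\|^2$, the forcing $h$ is absorbed using $\varphi h\in H$ (hence $h\in H$) and Young's inequality, and the nonlinear term is handled by
\begin{equation*}
(uu_x,-u_{xx})=\tfrac12\int_{\mathbb{R}}u_x^{3}\,dx\le \varepsilon\|u\|_3^2+C_\varepsilon\|u\|^{22/5},
\end{equation*}
the interpolation inequality $\|u_x\|_{L^3}^3\le C\|u\|_3^{?}\|u\|^{?}$ used already in Step 1 of Proposition \ref{proI}. Choosing $\varepsilon$ small so the $\|u\|_3^2$ term is dominated by $\|u_{xxx}\|^2+\|u_x\|^2$, one arrives at
\begin{equation*}
\frac{d}{dt}\mathbb{E}\|u_x\|^2+a\,\mathbb{E}\|u_x\|^2\le C\big(1+\mathbb{E}\|u\|^2+\mathbb{E}\|u\|^{22/5}\big).
\end{equation*}
Here $\tfrac{22}{5}=2p_2$, which is precisely why the exponent $p_2=\tfrac{11}{5}$ is chosen: the $\|u\|^{22/5}$ source term is controlled by the already-established moment $\mathbb{E}\|u\|^{2p_2}$.

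\noindent Next I would combine the two pieces. Adding the $\|u\|^2$-equation \eqref{1} (in expectation) and the $\|u_x\|^2$-equation, plus the evolution of $\|u\|^{2p_2}$ coming from the computation in the proof of Proposition \ref{pro5} (which yields $\tfrac{d}{dt}\mathbb{E}\|u\|^{2p_2}+p_2a\,\mathbb{E}\|u\|^{2p_2}\le C$), gives a closed differential inequality for $\mathbb{E}F(u(t))=1+\mathbb{E}\|u(t)\|_1^2+\mathbb{E}\|u(t)\|^{2p_2}$ of the form
\begin{equation*}
\frac{d}{dt}\mathbb{E}F(u(t))+a_0\,\mathbb{E}F(u(t))\le C_0,
\end{equation*}
with $a_0=a\wedge1$ and $C_0=C_0(a,h,B_1,B_3)$. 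Gronwall then yields $\mathbb{E}_uF(u(t))\le e^{-a_0t}F(u)+C_0/a_0$ for all $t\ge0$ and all $u\in V$. (The slightly delicate point is that $\|u\|_1^2$ and $\|u\|^2+\|u_x\|^2$ are comparable, and that $\mathbb{E}\|u\|^{2p_2}$ dominates $\mathbb{E}\|u\|^2$ for $\|u\|$ large, so $F$ is comparable to $1+\|u\|_1^2+\|u\|^{2p_2}$ as written.)

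\noindent Finally I would extract the two Lyapunov inequalities from $\mathbb{E}_uF(u(t))\le e^{-a_0t}F(u)+C_0/a_0$. Fix $t_*$ large enough that $q_*:=e^{-a_0t_*}+\tfrac12<1$, say, and then choose $R_*$ so large that $C_0/a_0\le\tfrac12 F(u)$ whenever $\|u\|_1\ge$ the value corresponding to $\|u\|_X=R_*$ (possible since $F(u)\to\infty$); this gives $\mathbb{E}_uF(u(t_*))\le q_*F(u)$ for $\|u\|\ge R_*$. For the bounded-set estimate, take $C_*:=\sup_{\|u\|\le R_*}F(u)+C_0/a_0$, which is finite because $F$ is continuous; then $\mathbb{E}_uF(u(t))\le F(u)+C_0/a_0\le C_*$ for all $\|u\|\le R_*$ and all $t\ge0$. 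The main obstacle is the $H^1$ a priori estimate: one must control $\int u_x^3\,dx$ using only interpolation on the whole line (no Poincar\'e inequality, no compact embedding), and one must make sure the resulting superlinear term $\|u\|^{22/5}$ matches an already-available moment — this is what pins down $p_2=\tfrac{11}{5}$ and is the only place where the whole-line setting forces care.
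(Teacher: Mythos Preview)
Your proposal is correct and follows essentially the same route as the paper: It\^o's formula for $\|u_x\|^2$, the interpolation $\int u_x^3\le \varepsilon\|u_{xxx}\|^2+C\|u\|^{22/5}$ (which is exactly why $p_2=\tfrac{11}{5}$), feeding in the $L^2$-moment bound from Proposition~\ref{pro5}, and then reading off the Lyapunov conditions from the resulting inequality $\mathbb{E}_uF(u(t))\le e^{-ct}F(u)+C$. The only cosmetic difference is that the paper substitutes the solved bound $\mathbb{E}\|u(t)\|^{2p_2}\le e^{-p_2at}\|u_0\|^{2p_2}+C$ directly into the $\|u_x\|^2$-ODE and integrates, whereas you bundle the three differential inequalities into one for $F$; both packagings give the same conclusion.
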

\begin{proof}[Proof of Proposition \ref{pro4}]
By applying It\^{o} formula to $\|u_{x}\|^{2}$, it holds that
\begin{equation*}
\begin{split}
d\|u_{x}\|^{2}+2[a\|u_{x}\|^{2}+\|u_{xxx}\|^{2}]dt=2[(h_{x},u_{x})+(uu_{x},u_{xx})]dt+B_{2}dt+2(-u_{xx},dW),
\end{split}
\end{equation*}
where $B_{2}=\sum\limits_{i=1}^{\infty}b_{i}^{2}\|e_{ix}\|^{2}$.
Noting the fact
\begin{equation*}
\begin{split}
(uu_{x},u_{xx})\leq \frac{1}{2}\|u_{xxx}\|^{2}+C\|u\|^{2}+C\|u\|^{2p_{2}},
\end{split}
\end{equation*}
then, with the help of Proposition \ref{pro5}, we have
\begin{equation*}
\begin{split}
\frac{d}{dt}\mathbb{E}\|u_{x}(t)\|^{2}+a\mathbb{E}\|u_{x}\|^{2}+\mathbb{E}\|u_{xxx}\|^{2}
&\leq C\mathbb{E}\|u(t)\|^{2}+C\mathbb{E}\|u(t)\|^{2p_{2}}+C(\|h_{x}\|^{2}+B_{2})\\
&\leq C\mathbb{E}\|u(t)\|^{2p_{2}}+C(1+\|h\|_{1}^{2}+B_{2})\\
&\leq Ce^{-p_{2}at}\|u_{0}\|^{2p_{2}}+D,
\end{split}
\end{equation*}
where $D$ doesn't depend on $u_{0}$. This implies that
\begin{equation*}
\begin{split}
\mathbb{E}\|u_{x}(t)\|^{2}\leq e^{-at}\|u_{0x}\|^{2}+e^{-at}\|u_{0}\|^{2p_{2}}+\frac{D}{a},
\end{split}
\end{equation*}
namely, we have
\begin{equation*}
\begin{split}
\mathbb{E}\|u(t)\|_{1}^{2}\leq e^{-at}(\|u_{0}\|_{1}^{2}+\|u_{0}\|^{2p_{2}})+\frac{D}{a},
\end{split}
\end{equation*}
then we have
\begin{equation}\label{3}
\begin{split}
\mathbb{E}F(u(t))\leq e^{-at}F(u_{0})+E
\end{split}
\end{equation}
where $E$ doesn't depend on $u_{0}$. There exist constants $t_{\ast},R_{\ast}$ such that $e^{-at_{\ast}}\leq \frac{1}{4},$
$F(u_{0})\geq 4E$ for any $\|u_{0}\|_{1}\geq R_{\ast},$ namely, we have
\begin{equation*}
\begin{split}
\mathbb{E}F(u(t_{\ast}))\leq \frac{1}{2}F(u_{0}).
\end{split}
\end{equation*}
Moreover, according to \eqref{3}, there exists a $C_{\ast}$ such that for any $t\geq0,\|u_{0}\|_{1}\leq R_{\ast},$
\begin{equation*}
\begin{split}
\mathbb{E}F(u(t))\leq C_{\ast}.
\end{split}
\end{equation*}
\end{proof}
\par
It follows from Proposition \ref{pro11}, Proposition \ref{pro4} and Proposition
3.3 in \cite{Shi08} that the recurrence property holds for $(u(t), \mathbb{P}_{u})$.
\begin{proposition}
For any $d>0,$ there exist constants $C,\delta>0$ and an integer
$N\geq1$ depending on $d, a, h, B_{1}, B_{2}, B_{3}$ such that
\begin{equation*}
\begin{split}
\mathbb{E}_{\textbf{u}}e^{\delta \tau_{\textbf{B}}}\leq C(1+\|u\|_{1}^{2}+\|u\|^{2p_{2}}+\|u^{\prime}\|_{1}^{2}+\|u^{\prime}\|^{2p_{2}})
\end{split}
\end{equation*}
for any $\textbf{u}\in V\times V,$ provided that \eqref{41} holds.
\end{proposition}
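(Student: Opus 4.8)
The plan is to upgrade the qualitative recurrence of Propositions~\ref{proI} and~\ref{pro11} to a quantitative exponential bound on $\tau_{\mathbf{B}}$ (with $\mathbf{B}=B_V(0,d)\times B_V(0,d)$), by combining the Lyapunov drift of Proposition~\ref{pro4} with the irreducibility of the extension, in the spirit of \cite[Proposition~3.3]{Shi08}; note that such an exponential bound is stronger than the \textbf{Recurrence} condition of Definition~\ref{Def1} and implies it for every $p$. Write $F(u)=1+\|u\|_1^2+\|u\|^{2p_2}$ and $\mathbf{F}(\mathbf{u})=F(u)+F(u')$ for $\mathbf{u}=(u,u')$. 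Since $(\mathbf{u}_t,\mathbb{P}_{\mathbf{u}})$ is an extension of $(u_t,\mathbb{P}_u)$, under $\mathbb{P}_{\mathbf{u}}$ the marginals $\Pi_1\mathbf{u}_t$ and $\Pi_2\mathbf{u}_t$ are solutions of \eqref{KSE}; adding the two componentwise estimates of Proposition~\ref{pro4} yields the standard additive drift: there are $t_\ast>0$, $q_\ast\in(0,1)$ and $C>0$ with $\mathbb{E}_{\mathbf{u}}\mathbf{F}(\mathbf{u}_{t_\ast})\le q_\ast\mathbf{F}(\mathbf{u})+C$ and $\sup_{0\le t\le t_\ast}\mathbb{E}_{\mathbf{u}}\mathbf{F}(\mathbf{u}_t)\le C\mathbf{F}(\mathbf{u})$ for all $\mathbf{u}\in V\times V$.

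\emph{Step 1 (exponential recurrence to a large product ball).} I would fix $\rho_0>0$ large and let $R$ be such that $\{\mathbf{u}:\mathbf{F}(\mathbf{u})\le 2\rho_0\}\subset B_V(0,R)\times B_V(0,R)=:\mathbf{D}$; the additive drift then forces a genuine contraction outside $\mathbf{D}$ once $\rho_0$ is chosen large enough. A standard discrete-time Foster--Lyapunov argument applied to the chain $(\mathbf{u}_{kt_\ast})_{k\ge0}$, interpolated with the bound $\sup_{0\le t\le t_\ast}\mathbb{E}_{\mathbf{u}}\mathbf{F}(\mathbf{u}_t)\le C\mathbf{F}(\mathbf{u})$, then produces $\delta_0>0$ and $C>0$, uniform in $\mathbf{u}$, with $\mathbb{E}_{\mathbf{u}}e^{\delta_0\sigma_{\mathbf{D}}}\le C\mathbf{F}(\mathbf{u})$, where $\sigma_{\mathbf{D}}=\inf\{t\ge0:\mathbf{u}_t\in\mathbf{D}\}$.

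\emph{Step 2 (hitting $\mathbf{B}$ from $\mathbf{D}$, and conclusion).} Apply Proposition~\ref{pro11} with the pair of radii $R$ and $d$: there are $T>0$, $p\in(0,1)$ and an integer $N\ge1$ (which is the integer in the statement) such that, provided \eqref{41} holds, $\mathbb{P}_{\mathbf{u}}(\mathbf{u}_T\in\mathbf{B})\ge p$ for every $\mathbf{u}\in\mathbf{D}$. I then run the usual geometric-trials scheme: from a point of $\mathbf{D}$, wait a time $T$; with probability $\ge p$ the process lies in $\mathbf{B}$ and $\tau_{\mathbf{B}}$ is realised, and otherwise, by the Markov property, the process restarts from a state with $\mathbb{E}\mathbf{F}(\mathbf{u}_T)\le C$, so by Step~1 it returns to $\mathbf{D}$ within a time whose $\delta_0$-exponential moment is $\le C$, after which the trial is repeated. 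Using the strong Markov property and summing the geometrically many excursions (the $k$-th trial occurring with probability $\le(1-p)^k$, each excursion carrying a uniformly bounded exponential moment), I obtain $\delta\in(0,\delta_0]$ and $C>0$ with $\mathbb{E}_{\mathbf{u}}e^{\delta\tau_{\mathbf{B}}}\le C\mathbf{F}(\mathbf{u})$. Since $\mathbf{F}(\mathbf{u})\le 2(1+\|u\|_1^2+\|u\|^{2p_2}+\|u'\|_1^2+\|u'\|^{2p_2})$, this is exactly the claimed estimate, with $N$, $\delta$ and $C$ depending only on $d,a,h,B_1,B_2,B_3$.

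\emph{Main obstacle.} The delicate point is the quantitative bookkeeping in Step~2: $\delta$ must be chosen small enough (depending on $p$, $T$, $\delta_0$ and the constant governing $\mathbb{E}\mathbf{F}(\mathbf{u}_T)$ on a failed trial) so that the series $\sum_{k\ge0}(1-p)^k\,m^k$ converges, where $m$ is the supremum over admissible restart states of the per-trial $\delta$-exponential moment of $T+\sigma_{\mathbf{D}}$; making $m$ finite and uniform is precisely where the uniform-in-$\mathbf{u}$ form of Step~1 is indispensable. A secondary, purely organisational point is the compatible choice of $\rho_0$ and $R$ (so that $\mathbf{D}$ simultaneously contains the sublevel set $\{\mathbf{F}\le 2\rho_0\}$ and lies in the region where the drift contracts) together with the integer $N$ from Proposition~\ref{pro11}; coerciveness of $F$ makes all these choices possible.
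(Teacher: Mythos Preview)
Your proposal is correct and is essentially the paper's approach: the paper does not give a self-contained proof but simply states that the proposition follows from Proposition~\ref{pro11}, Proposition~\ref{pro4}, and \cite[Proposition~3.3]{Shi08}, and your two-step argument (Foster--Lyapunov recurrence to a large product ball using the additive drift $\mathbf{F}(\mathbf{u})=F(u)+F(u')$, followed by geometric trials driven by the uniform irreducibility of Proposition~\ref{pro11}) is precisely what \cite[Proposition~3.3]{Shi08} establishes from those inputs.
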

\subsection{Polynomial squeezing}
We define the following stopping times
\begin{equation*}
\begin{split}
&\sigma:=\tilde{\tau}\wedge \sigma_{1},\\
&\tilde{\tau}:=\tau^{\tilde{u}}\wedge \tau^{\tilde{u}^{\prime}},\\
&\sigma_{1}:=\inf\{t\geq 0~|~\tilde{u}^{\prime}(t)\neq \tilde{v}(t)\}
\end{split}
\end{equation*}
and the following events
\begin{equation*}
\begin{split}
&\mathcal{Q}_{k}^{\prime}:=\{kT\leq \sigma\leq (k+2)T, \sigma_{1}\geq \tilde{\tau}\},\\
&\mathcal{Q}_{k}^{\prime\prime}:=\{kT\leq \sigma\leq (k+2)T, \sigma_{1}< \tilde{\tau}\}, k\geq0.
\end{split}
\end{equation*}
For $\mathcal{Q}_{k}^{\prime},\mathcal{Q}_{k}^{\prime\prime},$ we have the following property.
\begin{proposition}\label{pro12}
For any $q>1,$ there exist large enough constants $\rho,L,T>0$ and small enough $d>0$ such that
\begin{equation*}
\begin{split}
\mathbb{P}_{\mathbf{u}}(\mathcal{Q}_{k}^{\prime})\vee\mathbb{P}_{\mathbf{u}}(\mathcal{Q}_{k}^{\prime\prime})\leq \frac{1}{2(k+2)^{q}}
\end{split}
\end{equation*}
for any $\mathbf{u}\in \bar{B}_{H}(0,d)\times\bar{B}_{H}(0,d),$ provided that \eqref{41} holds.
\end{proposition}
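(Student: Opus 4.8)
The plan is to bound $\mathbb{P}_{\mathbf u}(\mathcal Q_k')$ and $\mathbb{P}_{\mathbf u}(\mathcal Q_k'')$ separately. On $\mathcal Q_k'$ one of the weighted energies of $\tilde u,\tilde u'$ escapes its threshold in the window $[kT,(k+2)T]$, which is exactly what Propositions \ref{proT1} and \ref{proT2} bound; on $\mathcal Q_k''$ the maximal coupling of $\tilde v$ and $\tilde u'$ breaks in that window while all energies stay below threshold, which I would control by the Foia\c{s}--Prodi estimate of Theorem \ref{FP} together with the Girsanov argument of Proposition \ref{pro10}. Since $\tilde u,\tilde u'$ have the laws of the solutions of \eqref{KSE} issued from $u,u'\in\bar B_H(0,d)$, all constants coming from the moment estimates may be taken uniform over $\mathbf u\in\bar B_H(0,1)^2$; I would fix $K\ge K_3\vee K_4$, $M\ge 1+(M_3\vee M_4)$ and then take the remaining parameters large (resp.\ small) in the order $q\rightsquigarrow(K,M)\rightsquigarrow(L,T)\rightsquigarrow\rho\rightsquigarrow d$.

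For $\mathcal Q_k'$: since $\sigma=\tau^{\tilde u}\wedge\tau^{\tilde u'}\in[kT,(k+2)T]$ there, one has $\mathcal Q_k'\subset\{kT\le\tau^{\tilde u}<\infty\}\cup\{kT\le\tau^{\tilde u'}<\infty\}$. Applying Propositions \ref{proT1} and \ref{proT2} with $l=kT$ and with $q+1$ in place of $q$ gives $\mathbb{P}_{\mathbf u}(kT\le\tau^{\tilde u}<\infty)\le C\,Q_{q+1}(\rho+LkT+1)=C(\rho+LkT+1)^{-q}$, and likewise for $\tilde u'$. Choosing $L,\rho$ so large that $\rho+LkT+1\ge(8C)^{1/q}(k+2)$ for all $k\ge0$ makes each term $\le\frac18(k+2)^{-q}$, hence $\mathbb{P}_{\mathbf u}(\mathcal Q_k')\le\frac14(k+2)^{-q}$.

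For $\mathcal Q_k''$: there $\sigma=\sigma_1\in[kT,(k+2)T]$ and $\sigma_1<\tilde\tau$, so on $[0,\sigma_1)$ the coupling is intact ($\tilde v=\tilde u'$) and both weighted energies remain below threshold. I would split $\mathcal Q_k''$ according as $\sigma_1$ lies in $[kT,(k+1)T)$ or in $[(k+1)T,(k+2)T]$, and for $j\in\{k,k+1\}$ condition on $\mathcal F_{jT}$. On the $\mathcal F_{jT}$-measurable event that the coupling is intact on $[0,jT)$ and $\tau^{\tilde u}\wedge\tau^{\tilde u'}>jT$, the conditional probability that the block-$j$ maximal coupling fails equals the total variation distance between the two conditional laws on that block, by the defining property of a maximal coupling. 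That distance I would estimate exactly as in Steps 2--4 of the proof of Proposition \ref{pro10}, restarted at $jT$: the process $w=\tilde u-\tilde v$ solves \eqref{WE}, so on $[0,jT)$ --- where the energies stay below threshold --- Theorem \ref{FP}, taken with $\varepsilon$ of order $aa_0/(C_*(K+L))$ so that $C_*\varepsilon\int(\|\tilde u\|_2^2+\|\tilde v\|_2^2+\|\psi\tilde u\|_1^2+\|\psi\tilde v\|_1^2)\,dr\le\frac a2 t+C(\rho+1)$, yields $\|w(jT)\|^2\le C\exp(-\frac a2 jT+C(\rho+1))\|u-u'\|^2\le Ce^{-ajT/2}$ (using $\|u-u'\|\le2d\le2$). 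Feeding this together with $\int_{jT}^{(j+1)T}(\|\tilde u\|_1^2+\|\tilde v\|_1^2)\,ds\le C(KjT+\rho+1)$ into the Novikov estimate for $A(t)=-P_N[\tilde u\tilde u_x-\tilde v\tilde v_x]$ (for which $\|A\|\le C\|w\|(\|\tilde u\|_1+\|\tilde v\|_1)$) gives $\int_{jT}^{(j+1)T}\|A\|^2\,ds\le Ce^{-ajT/2}(KjT+\rho+1)\le C'e^{-ajT/4}$, and by Girsanov the block-$j$ total variation distance is $\le\frac12(\exp(C'e^{-ajT/4})-1)^{1/2}$. Summing the two blocks, $\mathbb{P}_{\mathbf u}(\mathcal Q_k'')\le(\exp(C'e^{-akT/4})-1)^{1/2}$, and since $C'$ is proportional to $d^2$ this is $\le\frac12(k+2)^{-q}$ for all $k\ge0$ once $d$ is small enough; combined with the bound on $\mathcal Q_k'$ this is the assertion.

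The hard part will be the uniformity in $k$ of the $\mathcal Q_k''$ estimate. The weighted-energy thresholds grow linearly in time, so a naive restart at $jT$ produces a factor $e^{CKjT}$ in the Novikov bound, which cannot be absorbed uniformly; it is defeated by the Foia\c{s}--Prodi decay $e^{-ajT/2}$ only because the constant multiplying $\rho$ in \eqref{11} does not multiply $t$, and because the choice $\varepsilon\sim aa_0/(C_*(K+L))$ in Theorem \ref{FP}(2) folds the linear energy growth into half the dissipation, leaving the net rate $-\frac a2 t$; the residual factors polynomial in $j$ from the energy increments on block $j$ are then absorbed into the exponential. A secondary technical point, handled as in Proposition \ref{pro10}, is to work with the processes truncated at $\tilde\tau$ so the Novikov condition holds on all of $[0,\infty)$, and to run the Foia\c{s}--Prodi and Girsanov estimates in the $H$-norm of $w$, so that only $\|w(0)\|=\|u-u'\|\le2d$ is needed and no $H^1$-smallness of the initial data must be assumed.
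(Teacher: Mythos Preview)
Your strategy coincides with the paper's: bound $\mathcal Q_k'$ by Proposition~\ref{proT1} with $l=kT$, and bound $\mathcal Q_k''$ by conditioning at the start of the relevant block, using the Foia\c{s}--Prodi decay of $w$ up to that time to make the Novikov integral small, and then Girsanov. Your discussion of why the exponential decay $e^{-ajT/2}$ defeats the linear energy growth is exactly the mechanism behind the paper's $I_4$ estimate.

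There is, however, a genuine omission. The block-$j$ total variation bound in the style of Proposition~\ref{pro10} does not reduce to the single Girsanov term you compute: it decomposes (as in \eqref{23}) into the Girsanov piece \emph{plus} truncation terms of the form $\mathbb P_{\mathbf u(jT)}(\tau^{u_j}<\infty)$, $\mathbb P_{\mathbf u(jT)}(\tau^{u_j'}<\infty)$, $\mathbb P_{\mathbf u(jT)}(\tau^{\hat u_j'}<\infty)$, coming from the probability that the restarted truncation time is finite. You call this ``a secondary technical point, handled as in Proposition~\ref{pro10}'', but a direct restart gives only $CQ_q(\rho+1)$ by Proposition~\ref{proT1} with $l=0$, which is independent of $k$ (and the constant $C$ even grows with $\|\tilde u(jT)\|$, hence with $k$). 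The paper does \emph{not} restart here: it uses the tower property to pull these terms back to the original process, writing for instance
\[
\mathbb E_{\mathbf u}\bigl[\mathbb I_{\sigma\ge kT}\,\mathbb P_{\mathbf u(kT)}(\tau^{u}<\infty)\bigr]\le \mathbb E_{\mathbf u}\bigl[\mathbb P_{\mathbf u(kT)}(\tau^{u}<\infty)\bigr]=\mathbb P_{\mathbf u}(kT\le\tau^{u}<\infty),
\]
and only then applies Proposition~\ref{proT1} with $l=kT$ to obtain the factor $(k+2)^{-q}$. Without this Markov-property step the truncation contributions do not decay in $k$, and your bound $\mathbb P_{\mathbf u}(\mathcal Q_k'')\le(\exp(C'e^{-akT/4})-1)^{1/2}$ covers only the $I_4$ part of the paper's decomposition. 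Your Novikov estimate on block $j$ also tacitly assumes the energies stay below threshold on $[jT,(j+1)T]$, whereas only $\tilde\tau>jT$ is known on the conditioning event; this is precisely what forces the truncation at a \emph{restarted} stopping time, and hence the appearance of the $I_1,I_2,I_3$ terms you have not treated.
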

\begin{proof}[Proof of Proposition \ref{pro12}]
The proof is divided into the following steps.
\par
\textbf{Step 1.} Estimate for $\mathbb{P}_{\mathbf{u}}(\mathcal{Q}_{k}^{\prime})$.
\par
According to Proposition \ref{proT1}, we have
\begin{equation*}
\begin{split}
\mathbb{P}_{\mathbf{u}}(\mathcal{Q}_{k}^{\prime})&\leq \mathbb{P}_{\mathbf{u}}(kT\leq \tilde{\tau}<+\infty)\\
&\leq \mathbb{P}_{u}(kT\leq \tau^{\tilde{u}}<+\infty)\\
&\leq CQ_{q}(\rho+kLT+1)\\
&\leq \frac{1}{2(k+2)^{q}}
,
\end{split}
\end{equation*}
where we take $\rho, L, T$ large enough.
\par
\textbf{Step 2.} Estimate for $\mathbb{P}_{\mathbf{u}}(\mathcal{Q}_{0}^{\prime\prime})$.
\par
For any $\mathbf{u}=(u,u^{\prime})\in H\times H,$ by the same argument as in Proposition \ref{pro10} and \eqref{13}, we have
\begin{equation}\label{23}
\begin{split}
\mathbb{P}_{\mathbf{u}}(\mathcal{Q}_{0}^{\prime\prime})
&\leq\mathbb{P}_{\mathbf{u}}(0\leq \sigma_{1}\leq T)\\
&\leq 2\mathbb{P}_{\mathbf{u}}(\tau^{u}<\infty)
+2\mathbb{P}_{\mathbf{u}}(\tau^{u^{\prime}}<\infty)
+\mathbb{P}_{\mathbf{u}}(\tau^{\hat{u}^{\prime}}<\infty)
+2\|\mathbb{P}-\Phi^{u,u^{\prime}}_{\ast}\mathbb{P}\|_{var}\\
&\leq CQ_{q}(\rho +1)+\frac{1}{2}\left(\exp(Ce^{C(\rho+\|u\|^{2p_{1}}+\|u^{\prime}\|^{2p_{1}}+1)}d^{2})-1 \right)^{\frac{1}{2}}.
\end{split}
\end{equation}
By taking $\rho$ large enough and $d$ small enough, we have
\begin{equation*}
\begin{split}
\mathbb{P}_{\mathbf{u}}(\mathcal{Q}_{0}^{\prime\prime})
\leq \frac{1}{2^{q+1}}.
\end{split}
\end{equation*}
\par
\textbf{Step 3.} Estimate for $\mathbb{P}_{\mathbf{u}}(\mathcal{Q}_{k}^{\prime\prime})$.
\par
Indeed, by the Markov property we have
\begin{equation*}
\begin{split}
\mathbb{P}_{\mathbf{u}}(\mathcal{Q}_{k}^{\prime\prime})=\mathbb{P}_{\mathbf{u}}(\mathcal{Q}_{k}^{\prime\prime},\sigma\geq kT)
=\mathbb{E}_{\mathbf{u}}(\mathbb{I}_{\mathcal{Q}_{k}^{\prime\prime}}\cdot\mathbb{I}_{\sigma\geq kT})
\leq\mathbb{E}_{\mathbf{u}}(\mathbb{I}_{\sigma\geq kT}\cdot \mathbb{P}_{\mathbf{u}(kT)}(0\leq \sigma_{1}\leq T))
,
\end{split}
\end{equation*}
combining this with \eqref{23}, we have
\begin{equation*}
\begin{split}
\mathbb{P}_{\mathbf{u}}(\mathcal{Q}_{k}^{\prime\prime})
\leq &2\mathbb{E}_{\mathbf{u}}(\mathbb{I}_{\sigma\geq kT}\cdot \mathbb{P}_{\mathbf{u}(kT)}(\tau^{u}<\infty))
+2\mathbb{E}_{\mathbf{u}}(\mathbb{I}_{\sigma\geq kT}\cdot \mathbb{P}_{\mathbf{u}(kT)}(\tau^{u^{\prime}}<\infty))\\
&+\mathbb{E}_{\mathbf{u}}(\mathbb{I}_{\sigma\geq kT}\cdot \mathbb{P}_{\mathbf{u}(kT)}(\tau^{\hat{u}^{\prime}}<\infty))
+2\mathbb{E}_{\mathbf{u}}(\mathbb{I}_{\sigma\geq kT}\cdot \|\mathbb{P}-\Phi^{\tilde{u}(kT),\tilde{u}^{\prime}(kT)}_{\ast}\mathbb{P}\|_{var})\\
:=&2I_{1}+2I_{2}+I_{3}+2I_{4}.
\end{split}
\end{equation*}
\par
For the term $I_{4},$ let $u_{k},u_{k}^{\prime}$ be the solutions of \eqref{KSE} issued from $\tilde{u}(kT),\tilde{u}^{\prime}(kT),$ respectively,
$v_{k}$ be the solution of \eqref{F1} issued from $\tilde{u}^{\prime}(kT),$
$\tau^{k}:=\tau^{u_{k}}\wedge\tau^{u_{k}^{\prime}}\wedge\tau^{v_{k}}.$ Define $w_{k}(t):=u_{k}(t)-v_{k}(t),$ when $0\leq t\leq \tau^{k},$ with the help of Theorem \ref{FP} with
\begin{equation}\label{19}
\varepsilon=\frac{aa_{0}}{4C_{\ast}(K+L)}(1\wedge \frac{1}{2M}),
\end{equation}
we have
\begin{equation*}
\begin{split}
\|w_{k}(t)\|_{1}^{2}\leq C\|w_{k}(0)\|_{1}^{2}\exp\left (-\frac{a}{2}t+C(\rho+\|\tilde{u}(kT)\|^{2p_{1}}+\|\tilde{u}^{\prime}(kT)\|^{2p_{1}}+1)\right)d^{2}.
\end{split}
\end{equation*}
Noting the fact on the set $\{\sigma\geq kT\},$ we have
\begin{equation*}
\begin{split}
\|\tilde{u}(kT)\|^{2p_{1}}&\leq \mathcal{E}_{\tilde{u}}^{\psi}(kT)\leq (K+L)kT+\rho+M(d^{2}+d^{2p_{1}}+1),\\
\|\tilde{u}^{\prime}(kT)\|^{2p_{1}}&\leq \mathcal{E}_{\tilde{u}^{\prime}}^{\psi}(kT)\leq (K+L)kT+\rho+M(d^{2}+d^{2p_{1}}+1),\\
\|w_{k}(0)\|_{1}^{2}&\leq Ce^{C(\rho+d^{2p_{1}}+1)}d^{2}e^{-\frac{a kT}{2}},
\end{split}
\end{equation*}
the above facts imply that for $t\leq \tau^{k}$ on the set $\{\sigma\geq kT\},$ we have
\begin{equation*}
\begin{split}
\|w_{k}(t)\|_{1}^{2}\leq Ce^{C(\rho+d^{2p_{1}}+1)}d^{2}e^{-\frac{a}{2}t-\frac{a kT}{2}}.
\end{split}
\end{equation*}
Since $\|A(t)\|\leq C\cdot 1_{t\leq \tau}\cdot\|w\|^{2}_{1}(\|u\|^{2}_{1}+\|v\|^{2}_{1})$ and $\mathcal{E}_{u_{k}}^{\psi}(t)+\mathcal{E}_{v_{k}}^{\psi}(t)\leq C(t+kT+\rho+d^{2p_{1}}+1)$ for $t\leq \tau^{k}$ on the set $\{\sigma\geq kT\},$
this leads to
\begin{equation*}
\begin{split}
\int_{0}^{\infty}\|A_{k}(t)\|^{2}dt
\leq C\exp\left (C(\rho+d^{2p_{1}}+1)\right) d^{2}e^{-\frac{a kT}{2}},
\end{split}
\end{equation*}
Hence,
\begin{equation*}
\begin{split}
I_{4}&\leq(\exp(Ce^{C(\rho+d^{2p_{1}}+1)}d^{2}e^{-\frac{a kT}{2}})-1)^{\frac{1}{2}}\\
&\leq(\exp(e^{-\frac{a kT}{2}})-1)^{\frac{1}{2}}({\rm{by~take~}}d{\rm{~small~enough}})\\
&\leq \sqrt{2}e^{-\frac{a kT}{4}}({\rm{by~take~}}T {\rm{~large~enough}})\\
&\leq \frac{1}{16(k+2)^{q}}.
\end{split}
\end{equation*}
\par
For the term $I_{1},$ by Proposition \ref{proT1} and the Markov property we have
\begin{equation*}
\begin{split}
\frac{1}{16(k+2)^{q}}&\geq\mathbb{P}_{\mathbf{u}}(kT\leq \tau^{u}<\infty)
=\mathbb{E}_{\mathbf{u}}[\mathbb{E}_{\mathbf{u}}(\mathbb{I}_{kT\leq \tau^{u}<\infty}|\mathcal{F}_{kT})]
=\mathbb{E}_{\mathbf{u}}[\mathbb{P}_{\mathbf{u}(kT)}(\tau^{u}<\infty)]\\
&\geq\mathbb{E}_{\mathbf{u}}[\mathbb{I}_{\sigma\geq kT}\cdot\mathbb{P}_{\mathbf{u}(kT)}(\tau^{u}<\infty)]
=I_{1}.
\end{split}
\end{equation*}
\par
For the term $I_{2},I_{3},$ we also have
\begin{equation*}
\begin{split}
I_{2},I_{3}\leq \frac{1}{16(k+2)^{q}}.
\end{split}
\end{equation*}
Combining the above estimates for $I_{i}$, we conclude
\begin{equation*}
\begin{split}
\mathbb{P}_{\mathbf{u}}(\mathcal{Q}_{k}^{\prime\prime})
\leq \frac{1}{2(k+2)^{q}}.
\end{split}
\end{equation*}
This completes the proof.

\end{proof}

\par
Now we can establish the polynomial squeezing in Theorem \ref{THACPM}.
\par
For any $p>1$, we take $q^{\prime}>p+1.$
Let us fix $N$ large enough so that Proposition \ref{pro12} and 2) in Theorem \ref{FP} hold with $\varepsilon$ as in \eqref{19}. Then, \eqref{PS4}
follows from the definition of $\sigma$ and 2) in Theorem \ref{FP}.
First, Proposition \ref{pro12} (with $q=q^{\prime}$) implies that
\begin{equation*}
\begin{split}
\mathbb{P}_{\mathbf{u}}(\sigma=\infty)
\geq 1-\sum_{k=0}^{\infty}\mathbb{P}_{\mathbf{u}}(\sigma\in [kT,(k+1)T])
\geq 1-2\sum_{k=0}^{\infty}\frac{1}{2(k+2)^{q^{\prime}}}:=\delta_{1}.
\end{split}
\end{equation*}
It follows from Proposition \ref{pro12} that
\begin{equation}\label{42}
\begin{split}
\mathbb{E}_{\mathbf{u}}(\mathbb{I}_{\sigma<\infty}\sigma^{p})
&\leq \sum_{k=0}^{\infty}\mathbb{E}_{\mathbf{u}}(\mathbb{I}_{\sigma\in [kT,(k+1)T]}\sigma^{p})\\
&\leq \sum_{k=0}^{\infty}\frac{1}{(k+2)^{q^{\prime}}}(k+1)^{p}T^{p}:=c.
\end{split}
\end{equation}
Noting for $\sigma<\infty,$ we have
\begin{equation*}
\begin{split}
\mathcal{E}_{\tilde{u}}(\sigma)\leq \mathcal{E}_{\tilde{u}}(0)+(K+L)\sigma+M(d^{2}+d^{2p_{1}}+1),\\
\mathcal{E}_{\tilde{u}^{\prime}}(\sigma)\leq \mathcal{E}_{\tilde{u}^{\prime}}(0)+(K+L)\sigma+M(d^{2}+d^{2p_{1}}+1),\\
\end{split}
\end{equation*}
thus, for $1\leq q\leq p,$ we have
\begin{equation*}
\begin{split}
\mathbb{E}_{\mathbf{u}}(\mathbb{I}_{\sigma<\infty}G(\mathbf{u}(\sigma))^{q})
\leq&C\mathbb{E}_{\mathbf{u}}(\mathbb{I}_{\sigma<\infty}(\|\tilde{u}(0)\|^{2}+\|\tilde{u}^{\prime}(0)\|^{2}+2(K+L)\sigma+2M(d^{2}+d^{2p_{1}}+1))^{q}\\
\leq&C\mathbb{E}_{\mathbf{u}}(\mathbb{I}_{\sigma<\infty}(1+\sigma)^{q})\\
\leq&C\mathbb{E}_{\mathbf{u}}(\mathbb{I}_{\sigma<\infty}\sigma^{p})\\
\leq &K,
\end{split}
\end{equation*}
where we have used \eqref{42}.

\section{Proof of Theorem \ref{THACPM}}
\begin{proof}[Proof of Theorem \ref{THACPM}]
The proof is divided into the following steps.
\par
\textit{Step 1.} Since $\{\rho=\infty\}=\{\sigma=\infty\},$ we have $\mathbb{P}_{\textbf{u}}(\rho=\infty)\geq \delta_{1}.$
For any $\textbf{u}\in \textbf{B},$
\begin{equation}\label{pm4}
\begin{split}
\mathbb{E}_{\textbf{u}}\mathbb{I}_{\{\rho<\infty\}}\rho^{p}\leq M,
\end{split}
\end{equation}
where $M$ is independent from $\textbf{u}.$
\par
Indeed, we have
\begin{equation*}
\begin{split}
\mathbb{E}_{\textbf{u}}\mathbb{I}_{\{\rho<\infty\}}\rho^{p}
=&\mathbb{E}_{\textbf{u}}\mathbb{I}_{\{\sigma<\infty\}}(\sigma+\tau_{\textbf{B}}\circ \theta_{\sigma})^{p}\\
\leq& 2^{p}\mathbb{E}_{\textbf{u}}\mathbb{I}_{\{\sigma<\infty\}}[\sigma^{p}+(\tau_{\textbf{B}}\circ \theta_{\sigma})^{p}]\\
=& 2^{p}\mathbb{E}_{\textbf{u}}\mathbb{I}_{\{\sigma<\infty\}}\sigma^{p}+2^{p}\mathbb{E}_{\textbf{u}}\mathbb{I}_{\{\sigma<\infty\}}(\tau_{\textbf{B}}\circ \theta_{\sigma})^{p}]
.
\end{split}
\end{equation*}
Moreover, according to Recurrence and the second inequality and third inequality in Polynomial squeezing, we can prove
\begin{equation*}
\begin{split}
\mathbb{E}_{\textbf{u}}\mathbb{I}_{\{\sigma<\infty\}}(\tau_{\textbf{B}}\circ \theta_{\sigma})^{p}
=&\mathbb{E}_{\textbf{u}}[\mathbb{E}_{\textbf{u}}\mathbb{I}_{\{\sigma<\infty\}}(\tau_{\textbf{B}}\circ \theta_{\sigma})^{p}|\mathcal{F}_{\sigma}]\\
=&\mathbb{E}_{\textbf{u}}\mathbb{I}_{\{\sigma<\infty\}}\mathbb{E}_{\textbf{u}}[(\tau_{\textbf{B}}\circ \theta_{\sigma})^{p}|\mathcal{F}_{\sigma}]\\
=&\mathbb{E}_{\textbf{u}}\mathbb{I}_{\{\sigma<\infty\}}\mathbb{E}_{\textbf{u}_{\sigma}}\tau_{\textbf{B}}^{p}~(\rm{strong~Markov~property})\\
\leq&\mathbb{E}_{\textbf{u}}\mathbb{I}_{\{\sigma<\infty\}}G(\textbf{u}_{\sigma})\leq K,
\end{split}
\end{equation*}
this implies \eqref{pm4}.
\par
\textit{Step 2.}  We define a sequence of stopping times $\rho_{k}=\rho_{k}(\textbf{u},\omega)$ as follows
\begin{equation*}
\begin{split}
\rho_{0}:=\tau_{\textbf{B}},~\rho_{k}:=\rho_{k-1}+\rho\circ\theta_{\rho_{k-1}},k\geq1.
\end{split}
\end{equation*}
It follows from the above fact
\begin{equation*}
\begin{split}
\rho_{k}=\rho_{0}+\sum\limits_{i=0}^{k-1}\rho\circ\theta_{\rho_{i}}
=\tau_{\textbf{B}}+\sum\limits_{i=0}^{k-1}\rho\circ\theta_{\rho_{i}}.
\end{split}
\end{equation*}
For any $\textbf{u}\in \textbf{X},$ \eqref{pm4} implies that
\begin{equation}\label{pm6}
\begin{split}
\mathbb{E}_{\textbf{u}}\mathbb{I}_{\{\rho_{k}<\infty\}}\rho_{k}^{p}
=&\mathbb{E}_{\textbf{u}}\mathbb{I}_{\{\rho_{k}<\infty\}}(\tau_{\textbf{B}}+\sum\limits_{i=0}^{k-1}\rho\circ\theta_{\rho_{i}})^{p}\\
\leq&(k+2)^{p}\mathbb{E}_{\textbf{u}}\mathbb{I}_{\{\rho_{k}<\infty\}}[\tau_{\textbf{B}}^{p}+\sum\limits_{i=0}^{k-1}(\rho\circ\theta_{\rho_{i}})^{p}]\\
\leq&(k+2)^{p}[\mathbb{E}_{\textbf{u}}\tau_{\textbf{B}}^{p}+\sum\limits_{i=0}^{k-1}\mathbb{E}_{\textbf{u}}(\rho\circ\theta_{\rho_{i}})^{p}\mathbb{I}_{\{\rho\circ\theta_{\rho_{i}}<\infty\}}]\\
\leq&(k+2)^{p}[G(\textbf{u})+kM]\\
\leq&(k+2)^{p+1}[G(\textbf{u})+M].
\end{split}
\end{equation}
\par
For any $\textbf{u}\in \textbf{X},$ define
\begin{equation*}
\begin{split}
\bar{k}=\bar{k}(\textbf{u},\omega)=\sup\{k\geq0:\rho_{k}(\textbf{u},\omega)<+\infty\},
\end{split}
\end{equation*}
then, since
\begin{equation}\label{pm1}
\begin{split}
\mathbb{P}_{\textbf{u}}(\rho_{k}<\infty)
\leq&(1-\delta_{1})\mathbb{P}_{\textbf{u}}(\rho_{k-1}<\infty)\\
\leq&(1-\delta_{1})^{k}\mathbb{P}_{\textbf{u}}(\rho_{0}<\infty)\\
\leq&(1-\delta_{1})^{k}:=a^{k},
\end{split}
\end{equation}
the Borel-Cantelli lemma implies
\begin{equation*}
\begin{split}
\bar{k}<+\infty~{\rm{for}}~\mathbb{P}_{\textbf{u}}-a.e.
\end{split}
\end{equation*}
\par
We define
\begin{equation*}
\ell=\ell(\textbf{u},\omega)=
\left\{
\begin{array}{ll}
\rho_{\bar{k}(\textbf{u},\omega)}(\textbf{u},\omega),~{\rm{if}}~\bar{k}(\textbf{u},\omega)<+\infty,\\
+\infty,~~~~~~~~~~~~{\rm{if}}~\bar{k}(\textbf{u},\omega)=+\infty.
\end{array}
\right.
\end{equation*}
If there exists $k$ such that $\rho_{k}(\textbf{u},\omega)<+\infty,\rho_{k+1}(\textbf{u},\omega)=+\infty,$ then for any $t\geq\rho_{k}(\textbf{u},\omega),$
\begin{equation*}
\begin{split}
\|u_{t}(\textbf{u},\omega)-u_{t}^{\prime}(\textbf{u},\omega)\|\leq C(t-\rho_{k}(\textbf{u},\omega)+1)^{-p}~{\rm{for}}~t\geq\rho_{k}(\textbf{u},\omega),
\end{split}
\end{equation*}
then this implies that
\begin{equation*}
\begin{split}
\|u_{t}(\textbf{u},\omega)-u_{t}^{\prime}(\textbf{u},\omega)\|\leq C(t-\ell+1)^{-p}~{\rm{for}}~t\geq\ell,
\end{split}
\end{equation*}
where $\textbf{u}\in \textbf{X}.$ For any $p_{0}<p$, \eqref{pm6} and \eqref{pm1} imply that
\begin{equation*}
\begin{split}
\mathbb{E}_{\textbf{u}}\ell^{p_{0}}
=&\sum_{k=0}^{\infty}\mathbb{E}_{\textbf{u}}\mathbb{I}_{\{\bar{k}=k\}}\rho_{k}^{p_{0}}\\
\leq&\sum_{k=0}^{\infty}\mathbb{E}_{\textbf{u}}\mathbb{I}_{\{\rho_{k}<\infty\}}\rho_{k}^{p_{0}}\\
\leq&\sum_{k=0}^{\infty}(\mathbb{E}_{\textbf{u}}\mathbb{I}_{\{\rho_{k}<\infty\}})^{\frac{1}{q^{\prime}}}(\mathbb{E}_{\textbf{u}}\mathbb{I}_{\{\rho_{k}<\infty\}}\rho_{k}^{p})^{\frac{1}{p^{\prime}}}\\
\leq&\sum_{k=0}^{\infty}\mathbb{P}_{\textbf{u}}(\rho_{k}<\infty)^{\frac{1}{q^{\prime}}}(\mathbb{E}_{\textbf{u}}\mathbb{I}_{\{\rho_{k}<\infty\}}\rho_{k}^{p})^{\frac{1}{p^{\prime}}}\\
\leq&\sum_{k=0}^{\infty}a^{\frac{k}{q^{\prime}}}(k+2)^{\frac{p+1}{p^{\prime}}}(G(\textbf{u})+M)^{\frac{1}{p^{\prime}}}\\
\leq&C(G(\textbf{u})+1).
\end{split}
\end{equation*}
\par
\textit{Step 3.}  For any $u,u^{\prime}\in X,$
\begin{equation*}
\begin{split}
\|P_{t}(u,\cdot)-P_{t}(u^{\prime},\cdot)\|_{L}^{*}\leq C(g(\|u\|)+g(\|u^{\prime}\|))(t+1)^{-p_{0}},~\forall t\geq0.
\end{split}
\end{equation*}
\par
Indeed, for any $f\in \mathcal{L}(X),\|f\|_{\mathcal{L}}\leq1,$
noting that
\begin{equation*}
\begin{split}
\mathbb{E}_{\textbf{u}}f(\Pi_{X}\textbf{u}_{t})=\mathcal{B}_{t}f(u),
\mathbb{E}_{\textbf{u}}f(\Pi_{X}^{\prime}\textbf{u}_{t})=\mathcal{B}_{t}f(u^{\prime}),
\end{split}
\end{equation*}
then
\begin{equation*}
\begin{split}
|(f,P_{t}(u,\cdot)-P_{t}(u^{\prime},\cdot))|=&|\mathbb{E}_{\textbf{u}}(f(u_{t})-f(u^{\prime}_{t}))|\\
\leq &\mathbb{E}_{\textbf{u}}|f(u_{t})-f(u^{\prime}_{t})|\\
\leq &2\mathbb{P}_{\textbf{u}}(\ell\geq \frac{t+1}{2})+\mathbb{E}_{\textbf{u}}\mathbb{I}_{\{\ell\leq \frac{t+1}{2}\}}|f(u_{t})-f(u^{\prime}_{t})|\\
\leq &C(g(\|u\|)+g(\|u^{\prime}\|))((t+1)^{-p}+(t+1)^{-p_{0}})\\
\leq &C(g(\|u\|)+g(\|u^{\prime}\|))(t+1)^{-p_{0}},~~\forall t\geq0
.
\end{split}
\end{equation*}
\par
\textit{Step 4.} For any $u,u^{\prime}\in X,$
\begin{equation}\label{pm3}
\begin{split}
\|P_{t}(u,\cdot)-P_{s}(u^{\prime},\cdot)\|_{L}^{*}\leq C(g(\|u\|)+g(\|u^{\prime}\|))(t+1)^{-p_{0}},~s\geq t\geq0.
\end{split}
\end{equation}
Indeed, for any $f\in \mathcal{L}(X),\|f\|_{\mathcal{L}}\leq1,$ we have $s\geq t\geq 0,$
\begin{equation*}
\begin{split}
|(f,P_{t}(u,\cdot)-P_{s}(u^{\prime},\cdot))|
&=|\int_{X}P_{s-t}(u^{\prime},dz)\int_{X}(P_{t}(u,dv)-P_{t}(z,dv))f(v)|\\
&\leq C(t+1)^{-p_{0}}\int_{X}P_{s-t}(u^{\prime},dz)[g(\|u\|)+g(\|z\|)]\\
&\leq C(t+1)^{-p_{0}}[g(\|u\|)+\mathbb{E}_{u^{\prime}}g(\|u_{s-t}\|)]\\
&\leq C(t+1)^{-p_{0}}[g(\|u\|)+g(\|u^{\prime}\|)].
\end{split}
\end{equation*}
\par
\textit{Step 5.} $\{u_{t}(u,\omega)\}_{t\geq0}$ is polynomial mixing.
\par
Indeed, it follows from Prokhorov theorem that $\mathcal{P}(X)$ is a complete metric space, then there exists $\mu\in \mathcal{P}(X)$
independent from $u$ and is a stationary measure, and $P_{t}(u,\cdot)\rightarrow \mu$ as $t\rightarrow+\infty.$ We take $u^{\prime}=0$ in \eqref{pm3},
let $s\rightarrow+\infty,$ we have
\begin{equation*}
\begin{split}
\|P_{t}(u,\cdot)-\mu\|_{L}^{*}\leq C(g(\|u\|)+g(0))(t+1)^{-p_{0}},~~t\geq0.
\end{split}
\end{equation*}
\par
This completes the proof.
\end{proof}

~~\\
~~\\
~~\\
~~
\noindent \footnotesize {\bf Acknowledgements.}
\par
Peng Gao would like to thank the referees and the editors for their careful comments and useful suggestions.
Peng Gao would like to thank Professor Kuksin S, Shirikyan A, Nersesyan V and Zhao M for fruitful discussions.
Peng Gao thanks Professor Kuksin S for his invitation to Universit\'{e} Paris Cit\'{e}, and for his scientific supervision on ergodicity for random dynamical systems. Peng Gao thanks the financial support of the China Scholarship Council (No. 202406620219).
This work is supported by National Natural Science Foundation of China (Grant No. 12371188).

{\small
}

\begin{thebibliography}{99}

\bibitem{BCK14} Bakhtin, Y., Cator E., and Khanin K. Space-time stationary solutions for the Burgers equation. Journal of the American Mathematical Society 27.1 (2014): 193-238.

\bibitem{BL19} Bakhtin, Y., Li LY. Thermodynamic limit for directed polymers and stationary solutions of the Burgers equation. Communications on Pure and Applied Mathematics 72.3 (2019): 536-619.

\bibitem{DGR21} Dunlap, A., Graham, C., Ryzhik, L. Stationary solutions to the stochastic Burgers equation on the line. Communications in Mathematical Physics, 382(2), 875-949 (2021).

\bibitem{BFZ23} Brze\'{z}niak, Z., Ferrario, B., Zanella, M. Ergodic results for the stochastic nonlinear Schr\"{o}dinger equation with large damping. Journal of Evolution Equations, 23(1), 19 (2023).

\bibitem{B2014-hyp} Banerjee D, Ray S S. Transition from dissipative to conservative dynamics in equations of hydrodynamics[J]. Physical Review E, 2014, 90(4): 041001.

\bibitem{C1995-hyp} Chekhlov A, Yakhot V. Kolmogorov turbulence in a random-force-driven Burgers equation[J]. Physical Review E, 1995, 51(4): R2739.

\bibitem{Z1997-hyp} Zikanov O, Thess A, Grauer R. Statistics of turbulence in a generalized random-force-driven Burgers equation[J]. Physics of Fluids, 1997, 9(5): 1362-1367.

\bibitem{F2013-hyp} Frisch U, Ray S S, Sahoo G, et al. Real-space manifestations of bottlenecks in turbulence spectra[J]. Physical review letters, 2013, 110(6): 064501.

\bibitem{G2002-hyp} Gugg C, Kielh\"{o}fer H, Niggemann M. On the approximation of the stochastic Burgers equation[J]. Communications in mathematical physics, 2002, 230(1): 181-199.

\bibitem{B2019-hyp} Banerjee D. Fractional hyperviscosity induced growth of bottlenecks in energy spectrum of Burgers equation solutions[J]. The European Physical Journal B, 2019, 92(9): 209.

\bibitem{A1989} Abergel, F. Attractor for a Navier-Stokes flow in an unbounded domain. ESAIM: Mathematical Modelling and Numerical Analysis 23.3 (1989): 359-370.

\bibitem{Fla1} Albeverio S, Flandoli F, Sinai Y G, et al. An introduction to 3D stochastic fluid dynamics[J]. SPDE in Hydrodynamic: Recent Progress and Prospects: Lectures given at the CIME Summer School held in Cetraro, Italy August 29-September 3, 2005, 2008: 51-150.

\bibitem{M2014} Martirosyan D. Exponential mixing for the white-forced damped nonlinear wave equation[J]. Evolution Equations and Control Theory, 2014, 3(4): 645-670.

\bibitem{DaZ1} Da Prato G., Zabczyk J. Stochastic Equations in Infinite Dimensions, Cambridge University Press, 2014.

\bibitem{DaZ2} Da Prato G., Zabczyk J. Ergodicity for infinite dimensional systems. Vol. 229. Cambridge university press, 1996.

\bibitem{Deb1} Debussche A. Ergodicity results for the stochastic Navier-Stokes equations: an introduction[J]. Topics in Mathematical Fluid Mechanics: Cetraro, Italy 2010, Editors: Hugo Beirao da Veiga, Franco Flandoli, 2013: 23-108.

\bibitem{GPW} Gao, P. Polynomial mixing for the white-forced wave equation on the whole line. arXiv preprint arXiv:2412.13230 (2024).

\bibitem{HM06} Hairer M, Mattingly J C. Ergodicity of the 2D Navier-Stokes equations with degenerate stochastic forcing[J]. Annals of Mathematics, 2006: 993-1032.

\bibitem{HM08} Hairer M, Mattingly J C. Spectral gaps in Wasserstein distances and the 2D stochastic Navier-Stokes equations[J]. Annals of Probability, 2008, 36(6): 2050-2091.

\bibitem{HM11-1} Hairer M, Mattingly J C. A theory of hypoellipticity and unique ergodicity for semilinear stochastic PDEs[J]. Electronic Journal of Probability, 2011, 16: 23.

\bibitem{HM11-2} Hairer M, Mattingly J C, Scheutzow M. Asymptotic coupling and a general form of Harris' theorem with applications to stochastic delay equations[J]. Probability theory and related fields, 2011, 149: 223-259.

\bibitem{F1}Ferrario B. Invariant measures for a stochastic Kuramoto-Sivashinsky equation[J]. Stochastic analysis and applications, 2008, 26(2): 379-407.

\bibitem{FG15} F\"{o}ldes J, Glatt-Holtz N, Richards G, et al. Ergodic and mixing properties of the Boussinesq equations with a degenerate random forcing[J]. Journal of Functional Analysis, 2015, 269(8): 2427-2504.


\bibitem{KS12} Kuksin S, Shirikyan A. Mathematics of two-dimensional turbulence[M]. Cambridge University Press, 2012.

\bibitem{KNS1} Kuksin S, Nersesyan V, Shirikyan A. Exponential mixing for a class of dissipative PDEs with bounded degenerate noise[J]. Geometric and Functional Analysis, 2020, 30(1): 126-187.

\bibitem{Shi08} Shirikyan A. Exponential mixing for randomly forced partial differential equations: method of coupling[M]//Instability in Models Connected with Fluid Flows II. New York, NY: Springer New York, 2008: 155-188.

\bibitem{M1} Maslowski B, Seidler J. Invariant measures for nonlinear SPDE's: uniqueness and stability[J]. Archivum Mathematicum, 1998, 34(1): 153-172.

\bibitem{N2} Nersesyan V, Zhao M. Exponential mixing for the white-forced complex Ginzburg--Landau equation in the whole space[J]. SIAM Journal on Mathematical Analysis, 2024, 56(3): 3646-3678.

\bibitem{N3} Nersesyan V, Zhao M. Polynomial mixing for the white-forced Navier-Stokes system in the whole space. arXiv preprint arXiv:2410.15727 (2024).

\bibitem{PMJEE2005} Debussche A, Odasso C. Ergodicity for a weakly damped stochastic non-linear Schr\"{o}dinger equation[J]. Journal of Evolution Equations, 2005, 5: 317-356.

\bibitem{PMJEE2008} Nersesyan V. Polynomial mixing for the complex Ginzburg-Landau equation perturbed by a random force at random times[J]. Journal of Evolution Equations, 2008, 8: 1-29.

\bibitem{PMAMO} Nguyen H D. Polynomial mixing of a stochastic wave equation with dissipative damping[J]. Applied Mathematics \& Optimization, 2024, 89(1): 21.


\bibitem{PM2024} Nguyen H D. The inviscid limit for long time statistics of the one-dimensional stochastic Ginzburg-Landau equation[J]. arXiv preprint arXiv:2403.08951, 2024.

\bibitem{FPE} Foias C, Prodi G. Sur le comportement global des solutions non-stationnaires des \'{e}quations de Navier-Stokes en dimension $2$[J]. Rendiconti del Seminario matematico della Universita di Padova, 1967, 39: 1-34.



\end{thebibliography}
\end{document}